\newcommand{\IGPT}
{$\mathcal{I}\mathcal{G}\mathcal{P}$-\text{T}}
\newcommand{\cN}{\mathcal{N}}
\newcommand{\F}{\mathscr{F}_k}
\renewcommand{\epsilon}{\varepsilon}
\newcommand{\IGP}
{\mathcal{I}\mathcal{G}\mathcal{P}}
\definecolor{Red}{rgb}{1,0,0}
\definecolor{Blue}{rgb}{0,0,1}
\definecolor{Olive}{rgb}{0.41,0.55,0.13}
\definecolor{Yarok}{rgb}{0,0.5,0}
\definecolor{Green}{rgb}{0,1,0}
\definecolor{MGreen}{rgb}{0,0.8,0}
\definecolor{DGreen}{rgb}{0,0.55,0}
\definecolor{Yellow}{rgb}{1,1,0}
\definecolor{Cyan}{rgb}{0,1,1}
\definecolor{Magenta}{rgb}{1,0,1}
\definecolor{Orange}{rgb}{1,.5,0}
\definecolor{Violet}{rgb}{.5,0,.5}
\definecolor{Purple}{rgb}{.75,0,.25}
\definecolor{Brown}{rgb}{.75,.5,.25}
\definecolor{Grey}{rgb}{.5,.5,.5}
\newcommand{\ind}{\mathbbm{1}}
\renewcommand{\epsilon}{\varepsilon}
\newcommand{\R}{\mathbb{R}}
\newcommand{\ip}[2]{\langle{#1},{#2}\rangle}
\renewcommand{\R}{\mathbb{R}}
\newcommand{\ignore}[1]{\relax}
\newlength\myindent
\newtheorem{theorem}{Theorem}[section]
\newtheorem{remark}[theorem]{Remark}
\newtheorem{lemma}[theorem]{Lemma}
\newtheorem{proposition}[theorem]{Proposition}
\newtheorem{definition}[theorem]{Definition}
\newtheorem{Assumption}[theorem]{Assumption}
\newcommand{\ER}{Erd{\"o}s-R\'{e}nyi }
\renewcommand{\ip}[2]{\left\langle#1,#2\right\rangle}
\newcounter{parentnumber}
\def\BState{\State\hskip-\ALG@thistlm}
\definecolor{Red}{rgb}{1,0,0}
\definecolor{Blue}{rgb}{0,0,1}
\definecolor{Olive}{rgb}{0.41,0.55,0.13}
\definecolor{Green}{rgb}{0,1,0}
\definecolor{MGreen}{rgb}{0,0.8,0}
\definecolor{DGreen}{rgb}{0,0.55,0}
\definecolor{Yellow}{rgb}{1,1,0}
\definecolor{Cyan}{rgb}{0,1,1}
\definecolor{Magenta}{rgb}{1,0,1}
\definecolor{Orange}{rgb}{1,.5,0}
\definecolor{Violet}{rgb}{.5,0,.5}
\definecolor{Purple}{rgb}{.75,0,.25}
\definecolor{Brown}{rgb}{.75,.5,.25}
\definecolor{Grey}{rgb}{.5,.5,.5}
\definecolor{Pink}{rgb}{1,0,1}
\definecolor{DBrown}{rgb}{.5,.34,.16}
\definecolor{Black}{rgb}{0,0,0}
\newcommand{\bs}{\boldsymbol{\sigma}}
\renewcommand{\epsilon}{\varepsilon}
\title{Large Average Subtensor Problem: Ground-State, Algorithms, and Algorithmic Barriers} 
\author{\sf{Abhishek Hegade K. R.}\thanks{Department of Physics, University of Illinois Urbana-Champaign; e-mail: {\tt ah30@illinois.edu}} \and \sf{Eren C. K{\i}z{\i}lda\u{g}}\thanks{Department of Statistics, University of Illinois Urbana-Champaign; e-mail: 
 {\tt kizildag@illinois.edu}}}
\date{}
\begin{document}
\maketitle
\begin{abstract}
We introduce the large average subtensor problem: given an order-$p$ tensor over $\R^{N\times \cdots \times N}$ with i.i.d.\,standard normal entries and a $k\in\mathbb{N}$, algorithmically find a $k\times \cdots \times k$ subtensor with a large average entry. This generalizes the large average submatrix problem, a key model closely related to biclustering and high-dimensional data analysis, 
to tensors. For the submatrix case, Bhamidi, Dey, and Nobel~\cite{bhamidi2017energy} explicitly highlight the regime $k=\Theta(N)$ as an intriguing open question.

Addressing the regime $k=\Theta(N)$ for tensors, we establish that the largest average entry concentrates around an explicit value  $E_{\mathrm{max}}$, provided that the tensor order $p$ is sufficiently large. Furthermore, we prove that for any $\gamma>0$ and large $p$, this model exhibits multi Overlap Gap Property ($m$-OGP) above the threshold $\gamma E_{\mathrm{max}}$. The $m$-OGP serves as a rigorous barrier for a broad class of algorithms exhibiting input stability. These results hold for both $k=\Theta(N)$ and $k=o(N)$. Moreover, for small $k$, specifically  $k=o(\log^{1.5}N)$, we show that a certain
polynomial-time algorithm identifies a subtensor with  average entry $\frac{2\sqrt{p}}{p+1}E_{\mathrm{max}}$. In particular, the $m$-OGP is asymptotically sharp: onset of the $m$-OGP and the algorithmic threshold match as $p$ grows.

Our results show that while the case $k=\Theta(N)$ remains open for submatrices, it can be rigorously analyzed for tensors in the large $p$ regime. This is achieved by interpreting the model as a Boolean spin glass and drawing on insights from recent advances in the Ising $p$-spin glass model.
\end{abstract}
\newpage
\tableofcontents
\newpage
\section{Introduction}
In this paper, we introduce the \emph{large average subtensor} problem. Given an order-$p$ tensor 
\begin{equation}\label{eq:A_Tensor}
    \boldsymbol{A}=\bigl(A_{i_1,\dots,i_p},1\le i_1,\dots,i_p\le N\bigr)\in(\R^N)^{\otimes p}
\end{equation}
with i.i.d.\,standard normal entries, $A_{i_1,\dots,i_p}\sim \cN(0,1)$, and a positive integer $k$ that may grow with $N$, our objective is to algorithmically find a $k\times \cdots \times k$ subtensor with a large average entry. Formally, this corresponds to solving the following random optimization problem:
\begin{equation}\label{eq:MaxT}
M^*:=\max_{\substack{I_1,\dots,I_p\subset[N] \\ |I_1|=\cdots=|I_p|=k}}\frac{1}{k^p}\sum_{i_1\in I_1,\dots,i_p\in I_p}A_{i_1,\dots,i_p},
\end{equation}
where $[N]:=\{1,\dots,N\}$. In what follows, we refer to $M^*$ as the \emph{ground-state value}. 

The submatrix case ($p=2$) is particularly motivated by applications spanning from genomics and biomedicine to social networks~\cite{madeira2004biclustering,shabalin2009finding,fortunato2010community}, where identifying large average submatrices is commonly referred to as \emph{biclustering}~\cite{madeira2004biclustering}. For instance, in biomedical applications, the rows of the matrix represent measured biological quantities and the columns correspond to samples. 
A submatrix with a large average entry reveals meaningful interactions between corresponding biological quantities and samples, see~\cite{bhamidi2017energy} and references therein for details. 
For the submatrix case, rigorous results, including the ground-state value and algorithmic guarantees, were obtained in~\cite{sun2013maximal},~\cite{bhamidi2017energy}, and~\cite{gamarnik2018finding}, some of which we review in Section~\ref{sec:Submatrix-Results}.

Tensors naturally represent complex, multi-way interactions arising in numerous important applications (see, e.g.,~\cite{nion2010tensor,zhou2013tensor,zhang2019tensor} as well as the surveys~\cite{bi2021tensors,auddy2024tensor}), yet we are unaware of any prior work addressing the large average subtensor problem, i.e., the case $p\ge 3$. This gap represents a critical missing piece in the literature. Even more importantly, existing work on the submatrices ($p=2$) predominantly assumes $k=O(\log N)$, with the extension beyond this regime being unknown. 
Notably,~\cite[Section~2.7.1]{bhamidi2017energy} explicitly highlight the case $k=\alpha N$, $\alpha\in(0,1)$ as a very intriguing open problem that will require new ideas. More recently,  \cite{Erba_2024} put forth a rich phase diagram for the large average submatrix problem when $k=\Theta(N)$, by employing the non-rigorous replica method from statistical mechanics~\cite{1987sgtb.book.....M}; see Section~\ref{sec:Submatrix-Results}. Collectively, these works suggest that the regime $k=\Theta(N)$ harbor very intriguing properties, meriting further investigation. 
However, rigorous results for $k=\Theta(N)$ remain missing both in the submatrix case ($p=2$) as well as in the subtensor case ($p\ge 3)$. This serves as the main motivation of our paper, leading us to ask:
\begin{center}
    \emph{Can rigorous results be established when $k=\Theta(N)$ for the submatrix or the subtensor case?}
\end{center} 
We answer this question affirmatively for subtensors in the regime $N\to\infty$ and $p$ remains a large constant in $N$, which formally corresponds to the double limit $N\to\infty$  followed by $p\to\infty$.  While the submatrix case when $k=\Theta(N)$ still remains open and likely requires advanced tools---such as those from the Sherrington-Kirkpatrick Model~\cite{sherrington1975solvable,talagrand2006parisi,talagrand2011mean,panchenko2013sherrington}---our analysis demonstrates that for the subtensor case, rigorous results for $k=\Theta(N)$ can be established using more accessible techniques, such as refined applications of the first and the second moment methods, provided the order parameter $p$ is large. Focusing on the large parameter regime is common in the study of random structures; see Remark~\ref{remark:Large-p}.

More specifically, we determine the ground-state value for the random optimization problem~\eqref{eq:MaxT} when $p$ is large, both in the challenging regime $k=\Theta(N)$ and in the case $k=o(N)$. We then establish an algorithmic lower bound based on the Overlap Gap Property (OGP), an intricate feature of the optimization landscape that serves as a rigorous barrier against a broad array of algorithms~\cite{Gamarnik_2021,gamarnik2025turing}. Our OGP result also holds in both regimes, $k=\Theta(N)$ and $k=o(N)$. Lastly, focusing on smaller $k$, specifically $k=o(\log^{1.5} N)$, we devise a polynomial-time algorithm that achieves a $2\sqrt{p}/(p+1)$-approximation to the ground-state. Collectively, our results reveal that the onset of the OGP is asymptotically tight: it matches the algorithmic threshold as $p\to\infty$. See Section~\ref{Results:Informal} for an overview of our results.


Our approach crucially draws on insights from the recent advances in the analysis of the $p$-spin glass model when $p$ is large, see~\cite{gamarnik2023shattering,alaoui2023sampling,alaoui2023shattering,alaoui2024near,anschuetz2024bounds} for a related body of work. 
We next make this connection explicit.
\subsection{Connections to the $p$-Spin Glass Model}\label{sec:Glass}
Given an $\boldsymbol{A}\in(\R^N)^{\otimes p}$ with i.i.d.\,\,$\cN(0,1)$ entries as in~\eqref{eq:A_Tensor} and a $\boldsymbol{\sigma}\in\{-1,1\}^N$, the Ising (pure) $p$-spin glass model is defined by the Hamiltonian, a degree-$p$ random polynomial:
\begin{equation}\label{eq:P-spin}
    H_N(\boldsymbol{\sigma}) = N^{-\frac{p-1}{2}}\sum_{1\le i_1,\dots,i_p\le N}A_{i_1,\dots,i_p}\boldsymbol{\sigma}(i_1)\cdots \boldsymbol{\sigma}(i_p).
\end{equation}
Here, the scaling $N^{-\frac{p-1}{2}}$ ensures non-trivial behavior. There is an extensive literature surrounding this model; we refer the interested reader to textbooks~\cite{talagrand2011mean} and~\cite{panchenko2013sherrington}, as well as a recent encyclopedic overview on the subject~\cite{charbonneau2023spin}. 

The $p$-spin model is notoriously challenging to analyze for fixed $p$. However, a key observation dating back to~\cite{derrida1980random,derrida1981random} asserts that its large $p$ limit is well approximated by the Random Energy Model (REM), where $H_N(\bs),\bs\in\{-1,1\}^N$ are centered i.i.d.\,normal random variables. This is based on a simple observation: for any $\bs\ne \pm \bs’$ and fixed $N$, the correlation between $H_N(\bs)$ and $H_N(\bs')$ defined in~\eqref{eq:P-spin} decays as $p\to\infty$. Since the REM consists of i.i.d.\,random variables, it is relatively straightforward to analyze. Analogously, rigorous calculations for the $p$-spin model become more tractable for large $p$, where the probability estimates can be approximated by independent random variables plus correction terms that vanish as $p\to\infty$. Strictly speaking, the REM corresponds to a different asymptotic regime where
$p\to\infty$ limit is taken before $N\to\infty$. Nevertheless, a recent body of work~\cite{gamarnik2023shattering,alaoui2023sampling,alaoui2023shattering,kizildaug2023sharp,alaoui2024near}, alongside an earlier work~\cite{talagrand2000rigorous} demonstrate that insights from the REM  remain powerful in the regime where the limit $N\to\infty$ is taken before $p\to\infty$. This latter regime is also our focus, where 
the idea of analyzing random tensors $\boldsymbol{A}$ with a large order parameter $p$ also enables us to study~\eqref{eq:MaxT}  when $k=\Theta(N)$. 

To illustrate the connection between~\eqref{eq:MaxT} and the $p$-spin glass model, consider the case where we seek to find a \emph{principal} subtensor with a large average entry (identical $I_1,\dots,I_p$). Suppose $I_1=\cdots=I_p:=I$ for some $I\subset[N]$, and define $\bs\in\{0,1\}^N$ as the indicator of $I$: $\bs(i)=1$ iff $i\in I$. Denoting by $\|\bs\|_0$ the number of non-zero entries of $\bs$,~\eqref{eq:MaxT} can be rewritten as
\begin{equation}\label{eq:BooleanSpinGlass}
\max_{\substack{\bs\in\{0,1\}^N \\ \|\bs\|_0=k}} H_{\mathrm{B}}(\bs)\quad\text{where}\quad H_{\mathrm{B}}(\bs)=\frac{1}{k^p} \sum_{1\le i_1,\dots,i_p\le N}A_{i_1,\dots,i_p}\bs(i_1)\cdots \bs(i_p).
\end{equation}
Modulo the sparsity constraint $\|\bs\|_0=k$ and the scaling $k^{-p}$,~\eqref{eq:BooleanSpinGlass} closely resembles the 
$p$-spin glass model~\eqref{eq:P-spin} with Boolean-valued $\bs$. In particular, for any $\bs\ne \bs'$, the correlation between $H_{\mathrm{B}}(\bs)$ and $H_{\mathrm{B}}(\bs')$ also decays exponentially as $p\to\infty$, suggesting 
that the large $p$ asymptotics may simplify
the analysis of~\eqref{eq:BooleanSpinGlass} as well. For the more general case where each $I_i$ need not be identical,~\eqref{eq:MaxT} is captured by what we term as the Boolean $p$-partite spin glass, see Section~\ref{sec:Future}. As our analysis will show, this model also becomes more tractable when $p$ is large.
\subsection{Submatrix Case: Prior Work}\label{sec:Submatrix-Results}
We now review existing results for the submatrix case. Throughout, $o(1)$ denotes a function tending to zero as $N,k\to\infty$. For $p=2$,~\cite{bhamidi2017energy} established that the ground-state of~\eqref{eq:MaxT} is of order $2(1+o(1))\sqrt{(\log N)/k}$ with high probability (w.h.p.) when $k=O(\log N/\log \log N)$. In fact, they obtain a more refined result, showing that the ground-state, appropriately centered and normalized, converges to a Gumbel distribution.\footnote{If $T$ is an exponential random variable with parameter $1$, $-\log T$ follows the Gumbel distribution.} The value, $2(1+o(1))\sqrt{(\log N)/k}$, corresponds to the maxima of $\binom{N}{k}^2$ i.i.d.\,random variables $\cN(0,k^{-2}$), see Remark~\ref{remark:E-max}.

\cite{bhamidi2017energy} also characterized the joint distribution of the $k\times k$ submatrix with the largest average and calculated the expectation and the variance of the number of locally optimal submatrices. Such submatrices arise as fixed points of a natural greedy algorithm called the large average submatrix ($\mathcal{L}\mathcal{A}\mathcal{S}$) algorithm proposed by~\cite{shabalin2009finding}. They conjectured that the $\mathcal{L}\mathcal{A}\mathcal{S}$ finds a submatrix whose average 
is a factor of $\sqrt{2}$ smaller than the ground-state, namely $(1+o(1))\sqrt{(2\log N)/k}$ w.h.p. This was later confirmed in~\cite{gamarnik2018finding}. In the same work, Gamarnik and Li also proposed an improved algorithm called the incremental greedy procedure ($\IGP$), showing that it finds, in polynomial time, a submatrix with an average value of $(1+o(1))(4/3)\sqrt{(2\log N)/k}$ w.h.p.\,when $k=o(\log^{1.5} N)$. In Section~\ref{sec:ALGS}, we review the $\IGP$ algorithm and extend it to subtensors.

Note that both $\mathcal{L}\mathcal{A}\mathcal{S}$ and $\IGP$ fall short of finding a globally optimal submatrix: they fail to identify a submatrix with an average value $2\sqrt{(\log N)/k}$, highlighting a \emph{statistical-computational gap}~\cite{bandeira2018notes,Gamarnik_2021,gamarnik2022disordered}. Using insights from spin glasses,~\cite{gamarnik2018finding} establish that the set of submatrices with an average $\alpha\sqrt{(2\log N) /k}$ is topologically connected when $\alpha<\alpha^* = 5\sqrt{2}/(3\sqrt{3})\approx 1.36$ but becomes disconnected when $\alpha>\alpha^*$. As such, the model exhibits the Overlap Gap Property (OGP) when $\alpha>\alpha^*$, leading to the conjecture that this problem is not solvable in polynomial time when $\alpha>\alpha^*$, which remains open. For additional results when $k=N^\gamma,\gamma\in(0,1)$, see also~\cite{cheairi2022densest}.

More recently,~\cite{Erba_2024} studied the large average submatrix problem when $k=\Theta(N)$, using non-rigorous replica method of the statistical mechanics. They uncovered a rich phase diagram, exhibiting  dynamical, static one-step replica symmetry breaking (RSB) and full-step RSB. For $k=\alpha N$ with $\alpha\to 0$, they show that the model exhibits the so-called Frozen 1-RSB phase. In particular, the Frozen 1-RSB phase coincides with efficient search algorithms (such as the $\IGP$). This behavior closely resembles binary perceptron, see~\cite{perkins2021frozen,abbe2022binary,abbe2022proof,gamarnik2022algorithms,gamarnik2023geometric} for detailed discussions. 

The large submatrix problem is a random optimization problem and 
falls within 
the class of unplanted models~\cite{wu2021statistical}. For planted versions or hypothesis testing variants, see~\cite{arias2011detection,balakrishnan2011statistical,kolar2011minimax,butucea-ingster,ma-wu,montanari2015limitation,chen2016statistical}.
\section{Overview of Our Results}\label{Results:Informal}
We now provide an overview of our results, established under the following assumption. \begin{Assumption}\label{Asm:Scaling} We assume $N\to\infty$ and that $k$ scales such that $\limsup_{N\to\infty}\frac{k}{N}<1$. \end{Assumption} Assumption~\ref{Asm:Scaling} is very mild and holds as long as $N-k =\Omega(N)$. Importantly, it covers the regime $k=\alpha N$ for all $\alpha\in(0,1)$, which is explicitly raised as an open question in~\cite{bhamidi2017energy}, as well as $k=o(N)$. The following quantity will be central to our work: \begin{equation}\label{eq:E-star} E_{\mathrm{max}}:=\sqrt{\frac{2p}{k^p}\log\binom{N}{k}}. \end{equation} {\textbf {Ground-State Value }} Our first result shows that the ground-state value $M^*$ in~\eqref{eq:MaxT} concentrates around $E_{\mathrm{max}}$ for large $p$. \begin{theorem}[Informal, see Theorem~\ref{thm:LargeTensor}]\label{thm:GS-Informal} For any $\epsilon>0$ and sufficiently large  $p$, $M^*\in \bigl[(1-\epsilon)E_{\mathrm{max}},E_{\mathrm{max}}\bigr]$ w.h.p. \end{theorem}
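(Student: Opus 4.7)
\emph{Proof proposal.} I would split the bound into an upper and a lower half, invoking the REM-type intuition already highlighted in the excerpt: as $p$ grows, the correlation between distinct tuples decays geometrically. For a tuple $I=(I_1,\dots,I_p)$ of $k$-subsets of $[N]$, write
\[
M_I := \frac{1}{k^p}\sum_{i_1\in I_1,\dots,i_p\in I_p} A_{i_1,\dots,i_p},
\]
so that $M_I\sim\cN(0,k^{-p})$ and $\mathrm{Corr}(M_I,M_J)=\rho_{IJ}:=\prod_{\ell=1}^p|I_\ell\cap J_\ell|/k$. By~\eqref{eq:E-star}, $k^pE_{\max}^2=2p\log\binom{N}{k}$, a relation I use repeatedly.

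\emph{Upper bound.} Using Mills' ratio with $u=\sqrt{2p\log\binom{N}{k}}$:
\[
\Pr[M_I\ge E_{\max}]=\Pr[\cN(0,1)\ge u]\le \frac{1}{\sqrt{4\pi p\log\binom{N}{k}}}\binom{N}{k}^{-p}.
\]
A union bound over the $\binom{N}{k}^p$ tuples yields $\Pr[M^*>E_{\max}]\le(4\pi p\log\binom{N}{k})^{-1/2}\to 0$ as $N\to\infty$. The Mills prefactor is crucial: without it, one only gets $M^*\le(1+\epsilon)E_{\max}$. This half works for every $p$ and every $k$ satisfying Assumption~\ref{Asm:Scaling}.

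\emph{Lower bound.} Fix $\epsilon>0$, set $t=(1-\epsilon)E_{\max}$, and let $Z=|\{I:M_I\ge t\}|$. By Paley--Zygmund, $\Pr[Z>0]\ge(\E[Z])^2/\E[Z^2]$, so it suffices to control this ratio. A standard Gaussian tail estimate gives $\log\E[Z]\sim(2\epsilon-\epsilon^2)p\log\binom{N}{k}\to\infty$. For the second moment, I use the orthant bound $\Pr[M_I\ge t,M_J\ge t]\le \exp(-t^2k^p/(1+\rho_{IJ}))$ and group pairs by their overlap profile $\vec{a}=(a_\ell)_{\ell=1}^p$ with $a_\ell=|I_\ell\cap J_\ell|$, obtaining
\[
\frac{\E[Z^2]}{(\E[Z])^2}\le \sum_{\vec{a}} \Biggl(\prod_{\ell=1}^p\frac{\binom{k}{a_\ell}\binom{N-k}{k-a_\ell}}{\binom{N}{k}}\Biggr)\exp\!\Biggl(\frac{2(1-\epsilon)^2 p\log\binom{N}{k}\cdot\rho(\vec{a})}{1+\rho(\vec{a})}\Biggr),
\]
where $\rho(\vec{a})=\prod_\ell a_\ell/k$ and the bracketed factor is the probability that a uniform $J$ exhibits overlap $\vec{a}$ with a fixed $I$. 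I would split at a threshold $\rho_*$ tending to $0$: on the typical regime each $a_\ell/k$ concentrates near $k/N$, so $\rho\le(k/N+o(1))^p$ is exponentially small in $p$ and the exponential factor is $1+o(1)$; on the atypical regime the hypergeometric weights supply a large-deviation rate function that dominates the Gaussian penalty.

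\emph{Main obstacle.} The bottleneck is the second-moment sum on atypical profiles where several $\alpha_\ell:=a_\ell/k$ are close to $1$. The saving grace is the product structure of $\rho=\prod_\ell\alpha_\ell$: unless essentially \emph{all} $\alpha_\ell$ are close to $1$, $\rho$ remains small, while if all $\alpha_\ell\approx 1$ the hypergeometric weight is of order $\binom{N}{k}^{-p}$ and cancels the enumeration cost $\binom{N}{k}^p$. Making this cancellation uniform over the regime $\limsup k/N<1$ is exactly what forces the "sufficiently large $p$" hypothesis: concretely one needs $(\limsup k/N)^p\cdot p\log\binom{N}{k}\to 0$, which holds once $p$ is large enough depending on $\epsilon$ and $\limsup_{N\to\infty}k/N$. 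Handling the atypical regime cleanly (rather than only typical profiles) is the delicate step that distinguishes this from a trivial REM-style computation.
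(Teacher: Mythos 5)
Your upper bound is fine and matches the paper. The genuine gap is in the lower bound: you are assuming that the Paley--Zygmund ratio can be driven to $1+o(1)$ (or at least $O(1)$) by taking $p$ large, and your own sufficient condition, $(\limsup k/N)^p\cdot p\log\binom{N}{k}\to 0$, gives the game away. In the headline regime $k=\alpha N$ with $\alpha\in(0,1)$ we have $\log\binom{N}{k}=\Theta(N)$, so for any \emph{fixed} $p$ (and the theorem is precisely about $N\to\infty$ with $p$ a large constant, not $p$ growing with $N$) this quantity diverges. Concretely, even on your ``typical'' overlap profiles with $\rho\approx(k/N)^p$, the per-pair exponential factor is $\exp\bigl(\Theta(\rho\, p\log\binom{N}{k})\bigr)=\exp(\Theta_p(N))$, so $\E{Z^2}/\E{Z}^2$ blows up and Paley--Zygmund yields only an exponentially small lower bound on $\Pr[Z>0]$ --- exactly the bound $\exp(-c^p\bar E^2)$ recorded in~\eqref{eq:INFORMAL-2} and Proposition~\ref{Prop:2ndMom}. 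The paper states explicitly that the standard second moment method fails here; your treatment of the near-maximal-overlap profiles (the cancellation of the hypergeometric weight against the Gaussian penalty) is essentially the paper's $S_2$ estimate and is fine, but it does not rescue the low-overlap contribution, which under Assumption~\ref{Asm:Scaling} with $k=\Theta(N)$ cannot be made $1+o(1)$ for any constant $p$.

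The missing idea is the boosting step: one accepts the exponentially small Paley--Zygmund bound, observes that its exponent $(1-\delta)^{\epsilon p/2}\bar E^2$ becomes an arbitrarily small multiple of $p\log\binom{N}{k}$ as $p$ grows, and then invokes concentration of $M^*=\sup_{\mathcal I}k^{-p/2}\Phi_{\mathcal I}$ around its mean via the Borell--TIS inequality (Theorem~\ref{thm:borell-TIS}, with $\sigma_T^2=k^{-p}$), following Frieze's argument. Comparing the second-moment lower bound on $\Pr[M^*\ge(1-\epsilon)E_{\mathrm{max}}]$ with the Borell--TIS upper tail at deviation $u^*=\Theta\bigl((1-\delta)^{\epsilon p/4}E_{\mathrm{max}}\bigr)$ forces $\mathbb{E}[M^*]\ge(1-2\epsilon)E_{\mathrm{max}}$ for $p$ large, and a second application of Borell--TIS converts this into the w.h.p.\ statement. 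Without this repair, your proposal proves the lower bound only in regimes such as $k=N^{\gamma}$ with $p>\gamma/(1-\gamma)$, and not in the regime $k=\Theta(N)$ that the theorem is mainly about.
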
 Theorem~\ref{thm:GS-Informal} is valid both for $k=\Theta(N)$ and for $k=o(N)$.\footnote{In fact, for small $k$, it appears possible to prove Theorem~\ref{thm:GS-Informal} for all $p$---not just large $p$---via the second moment method. We omit the details for brevity.} The first step in our proof is to apply the second moment method, which alone only yields $M^*\ge (1-\epsilon)E_{\mathrm{max}}$ w.p.\,at least $\exp(-No_p(1))$, where $o_p(1)\to 0$ as $p\to\infty$. We then boost this to a high probability guarantee via an argument of~\cite{frieze1990independence}, which leverages the large $p$ asymptotics along with concentration properties of $M^*$ following from Borell-TIS inequality. See Section~\ref{sec:PfSketch} for an overview of proof. 

More broadly, this technique provides a powerful and general framework for proving sharp probabilistic bounds in models with at least two intrinsic parameters, particularly in regimes where one parameter is fixed and large while the other grows asymptotically. This structure appears widely in high-dimensional statistics, spin glasses, and theoretical computer science. However, this technique has seen only a handful of applications, primarily in the context of spin glasses~\cite{talagrand2011mean,anschuetz2024bounds,kizildaug2023sharp}, see also~\cite{kunisky2022strong} for an application to planted matchings. We believe this approach has the potential to serve as a powerful tool for models involving tensors. 
\begin{remark}\label{remark:E-max}
We provide some intuition for the value $E_{\mathrm{max}}$ in~\eqref{eq:E-star}. Recall the classical fact that if $X_1,\dots,X_M$ are i.i.d.\,\,$\mathcal{N}(0,\sigma^2)$, then $\max_{1\le i\le M}X_i=\sigma\sqrt{2\log M}(1+o(1))$ w.h.p. Consider $\binom{N}{k}^p$ random variables
    \[
    \frac{1}{k^p}\sum_{i_1\in I_1,\dots,i_p\in I_p}A_{i_1,\dots,i_p}\sim \mathcal{N}\left(0,\frac{1}{k^p}\right),
    \]
    where $I_1,\dots,I_p\subset[N]$ with $|I_i|=k$. While these random variables are not independent, our analysis shows that the underlying correlations vanish as $p\to\infty$. Consequently, $M^*$ behaves similarly to the maximum of $\binom{N}{k}^p$ i.i.d.\,\,$\mathcal{N}(0,\frac{1}{k^p})$ variables---which precisely corresponds to $E_{\mathrm{max}}$. As discussed in Section~\ref{sec:Glass}, this behavior closely resembles the Ising $p$-spin glass model whose large $p$ asymptotics is also well captured by the REM, where each corner of the discrete cube is endowed with an i.i.d.\,normal random variable.
\end{remark}
\paragraph{Algorithmic Guarantees} Our next focus is on algorithms. As mentioned, Gamarnik and Li~\cite{gamarnik2018finding} proposed a polynomial-time algorithm, $\IGP$, that identifies a $k\times k$ submatrix with an average value of $(4/3)\sqrt{2\log N/k}$ w.h.p., provided $k=o(\log^{1.5} N)$. We extend $\IGP$ to tensors, denoted by \IGPT\,(Algorithm~\ref{IGPT}) and establish the following.
\begin{theorem}[Informal, see Theorem~\ref{thm:IGPT}]\label{Thm:AlgInformal}
    Given $\boldsymbol{A}\in(\R^N)^{\otimes p}$ and $k=o(\log^{1.5} N)$, \IGPT\, finds a $k\times \cdots \times k$ subtensor with an average value of $(1+o(1))\frac{2\sqrt{p}}{p+1}E_{\mathrm{max}}$ w.h.p.
\end{theorem}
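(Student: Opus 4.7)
The plan is to adapt the incremental greedy procedure ($\IGP$) of Gamarnik and Li~\cite{gamarnik2018finding} from matrices to order-$p$ tensors. \IGPT\ (Algorithm~\ref{IGPT}) builds the target subtensor $I_1\times\cdots\times I_p$ round by round. At the start of round $t$, each $|I_j|=t-1$; during the round, for $j=1,\dots,p$ in sequence, we extend $I_j$ by picking $i^*\in[N]\setminus I_j$ that maximizes the sum of new entries $\sum_{i_\ell\in I_\ell,\,\ell\neq j}A_{i_1,\dots,i_{j-1},i^*,i_{j+1},\dots,i_p}$. The structural key is \emph{freshness}: each such entry has $i_j=i^*\notin I_j$ and therefore has never been inspected before, so conditionally on the history the optimization ranges over $N-(t-1)$ fresh i.i.d.\,sums, each $\cN(0,\sigma_{t,j}^2)$ with $\sigma_{t,j}^2:=t^{j-1}(t-1)^{p-j}$.

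Classical extreme-value asymptotics for Gaussian maxima, combined with Borell-TIS concentration, yield a step gain of $(1+o(1))\sigma_{t,j}\sqrt{2\log N}$ at each substep $(t,j)$, with failure probability polynomially small in $N$. A union bound over the $pk$ substeps succeeds under the hypothesis $k=o(\log^{1.5}N)$, which also ensures that the per-step fluctuations (of order $\sigma_{t,j}/\sqrt{\log N}$) aggregate to a negligible correction. Approximating $t-1\approx t$ and summing over $j=1,\dots,p$ inside round $t$ yields a round gain of $(1+o(1))p\,t^{(p-1)/2}\sqrt{2\log N}$, and summing over $t=1,\dots,k$ via $\sum_{t=1}^{k} t^{(p-1)/2}=(1+o(1))\frac{2}{p+1}k^{(p+1)/2}$ produces a total sum of
\[
(1+o(1))\,\frac{2p}{p+1}\,k^{(p+1)/2}\sqrt{2\log N}.
\]
Dividing by $k^p$ yields the average $(1+o(1))\frac{2p}{p+1}\sqrt{2\log N/k^{p-1}}$. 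Because $k=o(\log^{1.5}N)$ makes $\log k=o(\log N)$ and hence $\log\binom{N}{k}=(1+o(1))k\log N$, we also have $E_{\mathrm{max}}=(1+o(1))\sqrt{2p\log N/k^{p-1}}$; the ratio of the two is exactly $\frac{2\sqrt{p}}{p+1}$, as claimed.

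The principal technical obstacle is making the extreme-value estimate sharp uniformly across all $pk$ substeps while keeping track of the variance structure $t^{j-1}(t-1)^{p-j}$ (which varies both with the round $t$ and with the position $j$ within the round), and then controlling the aggregated fluctuations so that they remain of smaller order than $k^{(p+1)/2}\sqrt{\log N}$. A secondary subtlety concerns the small-$t$ (early round) regime, where the approximation $t-1\approx t$ fails and the $\sqrt{2\log N}$ factor dominates the naive per-step main term; these finitely many rounds contribute only $o(k^{(p+1)/2}\sqrt{\log N})$ to the total by crude bounds, so they do not alter the leading constant and can be absorbed into the error. The ceiling $k=o(\log^{1.5}N)$ is dictated by exactly these two quantitative requirements, mirroring the analogous constraint in~\cite{gamarnik2018finding} for the matrix case.
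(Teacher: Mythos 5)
Your outline reproduces the paper's calculation of the main term (per-substep variance $t^{j-1}(t-1)^{p-j}$, Gaussian-maximum gain $\sqrt{2\log N}$ per substep, the integral estimate $\sum_t t^{(p-1)/2}\approx \frac{2}{p+1}k^{(p+1)/2}$, and the identification $E_{\mathrm{max}}=(1+o(1))\sqrt{2p\log N/k^{p-1}}$), but it rests on a claim that is false for the algorithm as you describe it. You let each substep maximize over all of $[N]\setminus I_j$ and assert ``freshness'': that every entry in a candidate sum has never been inspected before, so that conditionally on the history the candidate sums are i.i.d.\ $\cN(0,t^{j-1}(t-1)^{p-j})$. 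This fails: at the earlier substep in which the \emph{same} coordinate $j$ was extended (round $t-1$), the candidate index ranged over essentially all of $[N]$, including the current candidate $i^*$; its candidate sum over the then-current sets was computed and \emph{lost} that maximization. Since each $I_\ell$ grows by only one element per round, those previously inspected entries make up nearly all of the current candidate sum for $i^*$ (roughly $(t-1)^{p-1}$ of the $\approx t^{p-1}$ terms). Conditioning on the history therefore biases these sums downward, and the extreme-value step estimate cannot be applied as if the $N-(t-1)$ candidate sums were fresh independent Gaussians. This dependency problem is exactly why Gamarnik--Li, and Algorithm~\ref{IGPT} in the paper, partition $[N]$ into $k$ disjoint blocks $P_1,\dots,P_k$ as in~\eqref{eq:Block-i} and restrict the round-$t$ search to $P_t$: then the entries examined at distinct substeps are disjoint, the events $E_{\mathrm{Initial}}$ and $E_{j,t}$ are mutually independent, Lemma~\ref{Lemma:Gauss-Max} applies cleanly to each block of size $\lfloor N/k\rfloor$, and the only price is $\log(N/k)=(1+o(1))\log N$. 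Without that device (or a genuinely new argument controlling the conditioning of losing candidates), your proof has a gap at its central probabilistic step; it is essentially the paper's informal Section~\ref{sec:AlgInformal} heuristic, which the paper itself flags as ignoring dependencies.

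A secondary inconsistency: you claim per-substep failure probability ``polynomially small in $N$,'' yet you invoke $k=o(\log^{1.5}N)$ to make the union bound work. With the Gumbel-scale window actually used (Lemma~\ref{Lemma:Gauss-Max}), the per-substep failure probability is $\Theta(1/\log^{1.5}(N/k))$, and the union bound over the $1+p(k-1)$ independent events is what produces the paper's guarantee $1-O(pk/\log^{1.5}N)$ and forces $k=o(\log^{1.5}N)$; if the failure probability were truly polynomially small, that hypothesis would be unnecessary for the probability bound. Aligning the deviation window, the per-step failure rate, and the resulting constraint on $k$ is needed for a correct quantitative statement.
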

\begin{remark}\label{remark:Small-k}
Note that Theorem~\ref{Thm:AlgInformal} pertains to the small-$k$ regime, $k=o(\log^{1.5}N)$. This is natural in light of the current algorithmic landscape.
For the submatrix case, only two algorithms are known: (a) the $\IGP$~\cite{gamarnik2018finding}, which also requires $k=o(\log^{1.5}N)$, and (b) the $\mathcal{L}\mathcal{A}\mathcal{S}$~\cite{shabalin2009finding}, which has been rigorously analyzed only for constant $k$~\cite{bhamidi2017energy}. No provable guarantees are known beyond this regime, making it one of the open problems highlighted in Section~\ref{sec:Future}.
\end{remark}

The absence of any algorithmic improvement over $\IGP$ in the submatrix case suggests a statistical–computational gap between the ground-state value and the best known algorithmic guarantee. For subtensors, Theorem~\ref{Thm:AlgInformal} reveals that the \IGPT\, incurs a factor $\frac{2\sqrt{p}}{p+1}$ gap with respect to the ground-state. Motivated by a rich body of work on such algorithmic gaps (see the surveys~\cite{bandeira2018notes,Gamarnik_2021,gamarnik2022disordered}), we next delve deeper into this gap.

\paragraph{Overlap Gap Property \& Algorithmic Barriers} Our final focus is on the algorithmic barriers in solving~\eqref{eq:MaxT} approximately and efficiently. As is now customary for similar random optimization problems, our approach is based on the multi Overlap Gap Property ($m$-OGP). The $m$-OGP is an intricate geometrical property of the optimization landscape that has been used to rigorously rule out a wide array of algorithms for numerous random computational problems. 
At a high level, it asserts the absence of near-optimal solutions with certain specific intermediate overlap values, see Definition~\ref{Def:Overlap-Set-correlated-generic-subtensor} for a formal statement.  
Our main result to this end is as follows.
\begin{theorem}[Informal, see Theorem~\ref{thm:M-OGP-generic-tensor-correlated}]\label{thm:m-ogp-informal}
For any $\gamma>0$ and sufficiently large $p$, the large average subtensor problem exhibits $m$-OGP above the threshold $\gamma E_{\mathrm{max}}$.
\end{theorem}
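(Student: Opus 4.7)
The plan is to establish the $m$-OGP by the first moment method, exploiting the REM-type decoupling of near-optimal solutions as $p\to\infty$ discussed in Section~\ref{sec:Glass}. Fix $\gamma>0$. The goal is to show that, for suitably chosen integer $m=m(\gamma)$ and overlap window $[\nu_1,\nu_2]$—both possibly depending on $\gamma$ and $p$—the expected number of $m$-tuples of index families $\{\mathbf{I}^{(j)}=(I_1^{(j)},\dots,I_p^{(j)})\}_{j=1}^m$, each with normalized subtensor average $H_j\ge \gamma E_{\max}$ and all pairwise overlaps (in the sense of Definition~\ref{Def:Overlap-Set-correlated-generic-subtensor}) in $[\nu_1,\nu_2]$, vanishes as $N\to\infty$, provided $p$ is large enough. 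Markov's inequality then delivers the $m$-OGP w.h.p.

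The expectation decomposes into a combinatorial count times a joint Gaussian tail probability. For the tail, $(H_1,\dots,H_m)$ is centered Gaussian with $\mathrm{Var}(H_j)=k^{-p}$ and pairwise correlation equal to $\prod_{l=1}^p |I_l^{(j)}\cap I_l^{(j')}|/k$. Under the natural overlap convention, the constraint $\mathrm{ov}\le \nu_2<1$ forces each correlation to be $o_p(1)$ (by a convexity-of-$\log$ argument if the overlap is defined via per-coordinate averages, or directly if it is defined as the product itself). Hence the covariance matrix is an $o_p(1)$ perturbation of $k^{-p}I_m$, and a direct diagonalization yields
\[
\Pr\Bigl[\bigcap_{j=1}^m\{H_j\ge\gamma E_{\max}\}\Bigr] \le \exp\!\Bigl(-m\gamma^2\,p\log\tbinom{N}{k}(1-o_p(1))\Bigr),
\]
uniformly in admissible overlap configurations. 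For the count, a large-deviation bound on hypergeometric intersection sizes shows that, given a fixed reference index family, the number of other index families whose overlap with it falls in $[\nu_1,\nu_2]$ is at most $\tbinom{N}{k}^p\exp(-c(\nu_1,\nu_2)\,p\log\tbinom{N}{k})$ for some $c(\nu_1,\nu_2)>0$, and $c(\nu_1,\nu_2)$ can be made close to $1$ by choosing a sufficiently narrow window. Chaining this bound across $j=2,\dots,m$ gives at most $\tbinom{N}{k}^{pm}\exp(-(m-1)\,c(\nu_1,\nu_2)\,p\log\tbinom{N}{k})$ admissible tuples in total.

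Combining,
\[
\mathbb{E}\bigl[\text{admissible }m\text{-tuples}\bigr] \le \exp\!\Bigl(p\log\tbinom{N}{k}\bigl[m-m\gamma^2(1-o_p(1))-(m-1)c(\nu_1,\nu_2)\bigr]\Bigr),
\]
so it suffices to arrange $(m-1)c(\nu_1,\nu_2)+m\gamma^2>m$, equivalently $c(\nu_1,\nu_2)(1-1/m)>1-\gamma^2$. For $\gamma$ not too small this is achievable with modest $m$ and a generous overlap window; for $\gamma\downarrow 0$ one tightens $[\nu_1,\nu_2]$ so that $c(\nu_1,\nu_2)\to 1$ while enlarging $m$ accordingly. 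Taking $p$ large then absorbs the $o_p(1)$ correction and makes the exponent strictly negative, establishing the $m$-OGP.

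The main obstacle will be the joint Gaussian tail bound: a generic Slepian or Sudakov–Fernique comparison would yield only a constant-factor loss rather than the sharp $(1-o_p(1))$ factor, which is insufficient when $\gamma$ is small. The large-$p$ hypothesis forces the off-diagonals to $o_p(1)$ and enables near-independence, but one still needs a careful Gaussian computation—e.g., via a Taylor expansion of the joint tail around the identity covariance, or a direct density bound for the correlated Gaussian vector—to produce the clean multiplicative correction, \emph{uniformly} across all admissible overlap configurations. A secondary technicality is that in the regime $k=\Theta(N)$ both the energy threshold and the entropy $\log\tbinom{N}{k}$ are of macroscopic order $\Theta(N)$, so the large-deviation rate $c(\nu_1,\nu_2)$ must be shown to be uniformly positive in $k/N\le 1-\eta$; this is precisely where Assumption~\ref{Asm:Scaling} enters, by keeping $k$ strictly below $N$.
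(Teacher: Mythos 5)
Your core route is the same as the paper's: a first moment bound over admissible $m$-tuples, decomposed as (counting term) $\times$ (joint Gaussian tail), with $m$ chosen so that $m\gamma^2>1$, the overlap window pushed toward $1$ so the per-solution count is a vanishing power of $\binom{N}{k}$, and the large-$p$ limit used to force all pairwise correlations below $\nu_2^p=o_p(1)$ so the tail behaves like $m$ independent Gaussians; this is exactly the skeleton of the paper's proof of Theorem~\ref{thm:M-OGP-generic-tensor-correlated} (see Proposition~\ref{proposition:C0-C1-bounds} and the choice $\delta=m\gamma^2-1>0$). However, there is a genuine omission: you prove only the single-instance ($\tau_1=\cdots=\tau_m=0$) version of the property, whereas the formal statement behind the informal theorem is an \emph{ensemble} $m$-OGP. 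Condition (c) of Definition~\ref{Def:Overlap-Set-correlated-generic-subtensor} requires each solution to be near-optimal for a correlated tensor $\hat{A}^{(\ell)}(\tau_\ell)=\cos(\tau_\ell)A^{(0)}+\sin(\tau_\ell)A^{(\ell)}$ with $\tau_\ell$ ranging over a set $\mathcal{J}$ of size up to $\binom{N}{k}^{c}$, and it is this ensemble version that feeds the stability-based algorithmic lower bound (Theorem~\ref{thm:STABLE-FAIL}). The paper closes this with two extra ingredients your proposal never invokes: a union bound over $\mathcal{J}^m$, whose cost $\binom{N}{k}^{cm}$ must be absorbed by taking $c$ small relative to $\delta/m$, and Slepian's comparison (Theorem~\ref{lemma:Slepian}), which uses $\mathbb{E}[\hat{\Phi}_{\mathcal{I}_i}\hat{\Phi}_{\mathcal{I}_j}]\le\mathbb{E}[\Phi_{\mathcal{I}_i}\Phi_{\mathcal{I}_j}]$ to reduce the correlated-instance tail to the $\tau=0$ tail. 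Without these, your argument establishes a weaker statement than the theorem cited by the informal claim.

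Two further points. First, the step you flag as the "main obstacle"---a joint tail bound with the sharp factor $m\gamma^2(1-o_p(1))$, uniform over all admissible overlap patterns---is indeed the technical heart, and your sketch leaves it open; the paper executes it via the multivariate Mills-ratio-type bound of Theorem~\ref{thm:multiv-tail}, controlling $\Sigma^{-1}$, $|\Sigma|$, and $\boldsymbol{1}^{T}\Sigma^{-1}\boldsymbol{1}$ through Sherman--Morrison (Theorem~\ref{thm:sm}) and Wielandt--Hoffman (Theorem~\ref{thm:hw}) perturbation bounds (Lemma~\ref{lemma:auxilary-results}), yielding the exponent $\gamma^2 p\,m/(1+2mp\nu_2^p)$, which is $m\gamma^2 p(1-o_p(1))$ since $\nu_2<1$. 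Second, your mechanism for the counting constant is misstated: $c(\nu_1,\nu_2)$ close to $1$ is \emph{not} obtained by narrowing the window $[\nu_1,\nu_2]$ but by pushing $\nu_1$ toward $1$ (a narrow window around, say, overlap $0.6$ still leaves $\binom{k}{\nu k}\binom{N-k}{(1-\nu)k}$ of the same exponential order as $\binom{N}{k}$ up to a constant factor in the exponent); this matters precisely in the regime $k=\Theta(N)$, where the paper takes $\nu_1\to 1$ so that $kh(\nu_1)+k(1-\nu_1)\log_2\frac{eN}{k(1-\nu_1)}$ is a small fraction of $\log_2\binom{N}{k}$.
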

The $m$-OGP is known to be a rigorous barrier for stable algorithms---a broad class that involves commonly used algorithms such as the approximate message passing~\cite{GamarnikJagannathAMP}, low-degree polynomials~\cite{gamarnik2020low,bresler2022algorithmic}, and low-depth Boolean circuits~\cite{gamarnik2024hardness}. Consequently, Theorem~\ref{thm:m-ogp-informal} directly implies the following:
\begin{theorem}[Informal]\label{thm:STABLE-FAIL}
    For any $\gamma>0$ and large enough $p$, sufficiently stable algorithms fail to find a subtensor with an average above $\gamma E_{\mathrm{max}}$.
\end{theorem}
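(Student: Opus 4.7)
The plan is to derive Theorem~\ref{thm:STABLE-FAIL} from Theorem~\ref{thm:m-ogp-informal} via the now-standard $m$-OGP-to-hardness pipeline developed for random optimization problems, cf.~\cite{GamarnikJagannathAMP,gamarnik2020low,bresler2022algorithmic,gamarnik2024hardness}. Suppose toward contradiction that a sufficiently stable randomized algorithm $\mathcal{A}$, on input a tensor $\boldsymbol{A}\in(\R^N)^{\otimes p}$ with i.i.d.\,$\cN(0,1)$ entries, outputs a subtensor of average value exceeding $\gamma E_{\mathrm{max}}$ with non-negligible probability $p_{\mathrm{succ}}$.

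First, I would set up a correlated ensemble of $m$ instances along a Gaussian interpolation path: fix a fine discretization $0=t_0<t_1<\cdots<t_T=\pi/2$ and independent standard tensors $\boldsymbol{A}_1^{(0)},\ldots,\boldsymbol{A}_m^{(0)},\boldsymbol{B}$, and define $\boldsymbol{A}_i^{(t)} := \cos(t)\boldsymbol{A}_i^{(0)} + \sin(t)\boldsymbol{B}$ for each $i\in[m]$. Each $\boldsymbol{A}_i^{(t)}$ is marginally distributed as in~\eqref{eq:A_Tensor}; the $m$ instances are independent at $t=0$ and fully coupled at $t=\pi/2$. I would then run $\mathcal{A}$ with fixed internal randomness on every $\boldsymbol{A}_i^{(t_j)}$ to obtain outputs $\bs_i^{(t_j)}$. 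Using the appropriate probability-amplification tool for the algorithm class at hand---Markov for AMP, conditional-moment bounds for low-degree polynomials, and the quantifier-switch technique of~\cite{gamarnik2024hardness} for bounded-depth circuits---one upgrades $p_{\mathrm{succ}}$ to a guarantee that, with positive probability, \emph{every} output $\bs_i^{(t_j)}$ along the discretized path achieves average value above $\gamma E_{\mathrm{max}}$.

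Second, I would track the pairwise overlaps of the $\bs_i^{(t_j)}$'s in the sense of Definition~\ref{Def:Overlap-Set-correlated-generic-subtensor}. Stability of $\mathcal{A}$ under the infinitesimal Gaussian perturbations $\boldsymbol{A}_i^{(t_{j+1})}-\boldsymbol{A}_i^{(t_j)}$ ensures that, for sufficiently fine discretization, consecutive outputs differ in $o(k)$ coordinates, so each pairwise overlap evolves slowly along the trajectory. At $t=0$ the $m$ outputs are (conditionally) independent so their typical pairwise overlaps are small, whereas at $t=\pi/2$ all overlaps equal one. By a discrete intermediate-value argument applied to the $\binom{m}{2}$-dimensional overlap trajectory, there must exist a time $t^\star$ at which the $m$-tuple $(\bs_1^{(t^\star)},\ldots,\bs_m^{(t^\star)})$ lands in the forbidden overlap region supplied by Theorem~\ref{thm:m-ogp-informal}, while every component remains a near-optimum---directly contradicting the $m$-OGP.

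The main technical obstacle is controlling the overlap trajectory when a solution is a $p$-tuple of subsets $(I_1,\ldots,I_p)$ rather than a single set: the overlap between two subtensors is itself a $p$-tuple of set-intersection quantities, and one must argue that all $\binom{m}{2}$ pairwise overlaps can be steered simultaneously into a prescribed forbidden box rather than merely traversed component-wise. This is precisely where the large-$p$ regime helps: Theorem~\ref{thm:m-ogp-informal} yields a forbidden window that widens as $p$ grows, so the simultaneous synchronization across the $p$ axes and across all $\binom{m}{2}$ pairs becomes feasible. The remaining quantitative details---calibrating the discretization length $T$, the branching number $m$, and the stability modulus in a coordinated way---follow the template of~\cite{gamarnik2020low,bresler2022algorithmic}, with the replacement of the hypercube geometry by the Boolean sparsity-constrained geometry appearing in~\eqref{eq:BooleanSpinGlass}.
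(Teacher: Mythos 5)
The paper does not actually prove Theorem~\ref{thm:STABLE-FAIL}: it states that the result "follows standard techniques in the literature on the OGP" and omits it, citing~\cite{GamarnikJagannathAMP,gamarnik2022algorithms,gamarnik2024hardness}. Your proposal is exactly that standard interpolation-plus-stability pipeline, driven by the ensemble $m$-OGP of Theorem~\ref{thm:M-OGP-generic-tensor-correlated}, so at the level of strategy you and the paper coincide. Two steps of your sketch, however, are not justified as written, and they are precisely the steps the cited literature has to work for.

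First, the simultaneity step. You correctly identify that a coordinatewise intermediate-value argument only places each of the $\binom{m}{2}$ pairwise overlaps in $[\nu_1,\nu_2]$ at \emph{some} time, not all of them at a \emph{common} configuration, but your proposed fix---"the forbidden window widens as $p$ grows"---has no support in Theorem~\ref{thm:M-OGP-generic-tensor-correlated}: that theorem only supplies \emph{some} fixed $1/2<\nu_1<\nu_2<1$ (with $\nu_1$ taken close to $1$ in the proof, and $\eta=\nu_2-\nu_1$ taken \emph{small} so that Lemma~\ref{lemma:auxilary-results} applies); nothing widens with $p$. The way this is actually handled in~\cite{gamarnik2022algorithms} and related works is structural, not metric: one tunes the interpolation times $\tau_\ell$ of the replicas one at a time (the overlap between two already-frozen replicas does not move when a third replica's time is varied), and/or one runs the algorithm on many more than $m$ independent chains and extracts, via a Ramsey-type argument, an $m$-subset and a common time at which all pairwise overlaps fall in the same regime. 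This is also why Definition~\ref{Def:Overlap-Set-correlated-generic-subtensor} allows \emph{different} $\tau_\ell\in\mathcal{J}$ per replica and why $\mathcal{J}$ is permitted to have size $\binom{N}{k}^{c}$: the forbidden structure must be available at every choice of (discretized) times visited by such a sequential construction, which a single common-time trajectory as in your sketch does not exploit.

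Second, the endpoint step. Your claim that at the independent end the outputs have small pairwise overlaps "since the instances are independent" does not follow: with the internal randomness fixed, the outputs are independent random elements of $\F$, but a stable algorithm may be biased toward a fixed $p$-tuple of index sets, in which case all overlaps are near $1$ at \emph{both} ends of the interpolation and the trajectory never needs to cross $[\nu_1,\nu_2]$. Ruling this out requires an additional ingredient that Theorem~\ref{thm:M-OGP-generic-tensor-correlated} as stated does not give (it only forbids overlaps \emph{inside} the window): either a separate first-moment estimate showing that near-optimal solutions of independent instances cannot have overlap above $\nu_2$ (so that, conditioned on success, the endpoint overlaps are forced below the window), or an extension of the forbidden set to cover overlaps in $[\nu_1,1]$ for replicas attached to distinct independent tensors. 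Both variants appear in the cited literature; one of them must be stated and proved for this model before your intermediate-value step has a valid starting point.
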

The proof of Theorem~\ref{thm:STABLE-FAIL} follows  standard techniques in the literature on the OGP (see, e.g.,~\cite{GamarnikJagannathAMP,gamarnik2022algorithms,gamarnik2024hardness}) and is omitted here for brevity.
\paragraph{$m$-OGP for Large $p$ and Algorithmic Threshold} 
Theorem~\ref{Thm:AlgInformal} establishes that \IGPT\,identifies a subtensor with an average value $\frac{2\sqrt{p}}{p+1}E_{\mathrm{max}}$. Meanwhile, Theorems~\ref{thm:m-ogp-informal} and~\ref{thm:STABLE-FAIL} together imply the existence of a sequence $\{\gamma_p\}_{p\ge 1}$ with $\gamma_p\to 0$ as $p\to\infty$, such that the large average subtensor problem exhibits $m$-OGP above $\gamma_p E_{\mathrm{max}}$ for all sufficiently large $p$. Consequently, stable algorithms fail in this regime. 

This leads to an intriguing conclusion: the gap between the algorithmic threshold prescribed by the $m$-OGP and the performance of the \IGPT\,vanishes as $p\to\infty$, i.e., $|\gamma_p - 2\sqrt{p}/(p+1)|\to 0$. In other words, the $m$-OGP is asymptotically tight.
\begin{remark}\label{remark:Large-p}
 Theorems~\ref{thm:GS-Informal},~\ref{thm:m-ogp-informal} and~\ref{thm:STABLE-FAIL} hold in the large-$p$ regime. This is consistent with a broad line of work in random structures---such as random $k$-SAT, spin glasses, and random graphs---where rigorous results for fixed parameter values are quite challenging or remain open. In many of these models, substantial progress has first occurred in the large parameter regime. For instance, in the random $k$-SAT, both satisfiability and algorithmic thresholds are only known for large $k$~\cite{ding2015proof,bresler2022algorithmic}. In the Ising $p$-spin model, rigorous results for large $p$~\cite{talagrand2000rigorous} preceded breakthroughs for fixed $p$~\cite{talagrand2006parisi}, with the latter being far more delicate. The shattering phenomenon in spin glasses, a key structural property (see below), has only recently begun to be understood rigorously---again, primarily for large $p$~\cite{gamarnik2023shattering,alaoui2023shattering,alaoui2024near}.

Similarly, for (sparse) random graphs, most rigorous results—including Frieze’s original paper~\cite{frieze1990independence}, whose ideas we draw upon—pertain to the large average degree regime~\cite{bayati2010combinatorial,gamarnik2014limits,wein2020optimal}. While the large submatrix problem is well-motivated in applied statistics (e.g., biclustering), rigorous understanding remains limited. 
The subtensor case, to our knowledge, has not been studied rigorously at all. In light of these precedents, we view the large-$p$ regime as a natural and necessary starting point. Extension to fixed $p$ is among the open problems raised in Section~\ref{sec:Future}, which will likely require sophisticated tools from spin glass theory~\cite{talagrand2006parisi,talagrand2011mean,panchenko2013sherrington}.
\end{remark}
\paragraph{Comparison with~\cite{gamarnik2023shattering}} We compare our results with~\cite{gamarnik2023shattering}, establishing the $m$-OGP for the Ising $p$-spin glass and confirming the existence of a shattering phase---originally conjectured  in the landmark paper~\cite{kirkpatrick1987p}---using large-$p$ asymptotics. First,~\cite{gamarnik2023shattering} focuses on the Ising $p$-spin glass, whereas we study large subtensors, where the goal is to identify $p$-tuples of subsets $I_1,\dots,I_p$~\eqref{eq:MaxT}. Despite similarities in our approach to establishing $m$-OGP, our ground-state analysis differs significantly. \cite{gamarnik2023shattering} employs a standard second moment calculation, leveraging a Taylor expansion of the binary entropy. In contrast, applying the second moment in our setting is more intricate; it involves pairs of $p$-tuples of sets, requiring careful tracking of the size of intersections; see Proposition~\ref{Prop:2ndMom} and its proof in Section~\ref{pf:PropMain}.
More importantly, the second moment method alone only yields an $\exp(-No_p(1))$ probability bound, which we refine by using Frieze's argument alongside Borell-TIS inequality; see Section~\ref{sec:PfSketch} for the overview. Another key distinction is that~\cite{gamarnik2023shattering} primarily establishes existential results, whereas we also propose a polynomial-time algorithm for our model, Algorithm~\ref{IGPT}.
\paragraph{Concurrent Works} The same model has also been independently introduced in a concurrent work by Erba, Malo Kupferschmid, P\'{e}rez	Ortiz, and Zdeborov\'{a}~\cite{MAS-2025}, very recently posted on arXiv. Using non-rigorous tools from statistical physics, they characterize the equilibrium phase diagram, exhibiting replica symmetric and frozen one-step replica-symmetry-breaking phases for small and large values of the subtensor average, respectively. They also heuristically derive the algorithmic threshold for $\IGP$ adapted to tensors and carry out a non-rigorous OGP calculation. In addition to relying on non-rigorous methods, their work focuses on a different regime: fixed $p$, though under the crucial assumption $k\ll N$. 

A forthcoming work by Gamarnik and Gong~\cite{BranchingOGPSharp} establishes a matching algorithmic lower bound via a branching OGP argument. They show that the onset of branching OGP occurs precisely at $\frac{2\sqrt{p}}{p+1}$---the value achieved by \IGPT---suggesting that \IGPT\,\,is essentially optimal. Their argument builds on ideas from~\cite{du2025algorithmic}, see Remark~\ref{remark:branching-tight} for details.
\paragraph{Notation} For any set $A$, denote its cardinality by $|A|$. For any event $E$, $\ind\{E\}$ denotes its indicator. For any $v,v'\in\R^n$, $\ip{v}{v'}:=\sum_{1\le i\le n}v(i)v'(i)$. For $r>0$, $\log_r(\cdot)$ and $\exp_r(\cdot)$ respectively denote the logarithm and exponential functions base $r$; we omit the subscript when $r=e$. For $\boldsymbol{\mu}\in\R^k$ and $\Sigma\in\R^{k\times k}$, $\cN(\boldsymbol{\mu},\Sigma)$ denotes the multivariate normal distribution in $\R^k$ with mean $\boldsymbol{\mu}$ and covariance $\Sigma$. For any $q\in[0,1]$, $h(q):=-q\log_2 q-(1-q)\log_2(1-q)$ is the binary entropy. Given a matrix $M$, $\|M\|_F,\|M\|_2$, and $|M|$ respectively denote its Frobenius norm, spectral norm and determinant. Throughout, we employ the standard asymptotic notation, $\Theta(\cdot),o(\cdot),O(\cdot),\Omega(\cdot),\omega(\cdot)$. 
\section{Ground-State Value}\label{sec:ground-state} 
Recall  $M^*$ from~\eqref{eq:MaxT} and $E_{\mathrm{max}}$ from~\eqref{eq:E-star}. Under the Assumption~\ref{Asm:Scaling}, we establish the following.
\begin{theorem}\label{thm:LargeTensor}
  For any $\epsilon>0$, there exists a $P^*:=P^*(\epsilon)\in\mathbb{N}$ such that the following holds. Fix any $p\ge P^*$. Then,
  \[
  \mathbb{P}\Bigl[(1-\epsilon)E_{\mathrm{max}}\le M^* \le E_{\mathrm{max}}\Bigr]\ge 1-\Theta\left(\frac{1}{\sqrt{\log \binom{N}{k}}}\right) = 1-o_{N,k}(1).
  \]
\end{theorem}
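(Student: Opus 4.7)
The proof splits naturally into an upper bound $M^* \le E_{\mathrm{max}}$ and a lower bound $M^* \ge (1-\epsilon) E_{\mathrm{max}}$, with the former alone contributing the stated failure probability. For the upper bound, each ordered tuple $\mathcal{I} = (I_1, \dots, I_p)$ of $k$-subsets contributes an average $X_{\mathcal{I}} := k^{-p} \sum_{i_1 \in I_1, \dots, i_p \in I_p} A_{i_1, \dots, i_p} \sim \mathcal{N}(0, k^{-p})$. Applying the sharp Mills-ratio inequality $\Pr[\mathcal{N}(0,1) \ge x] \le \frac{1}{x\sqrt{2\pi}} e^{-x^2/2}$ at $x = E_{\mathrm{max}} \sqrt{k^p} = \sqrt{2p \log \binom{N}{k}}$ and union-bounding over the $\binom{N}{k}^p$ tuples yields $\Pr[M^* > E_{\mathrm{max}}] \le 1/\sqrt{4\pi p \log \binom{N}{k}}$, which matches the error stated in the theorem.

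For the lower bound I would let $Z$ count the tuples $\mathcal{I}$ with $X_{\mathcal{I}} \ge (1-\epsilon) E_{\mathrm{max}}$ and carry out a second-moment calculation. A direct Gaussian tail estimate gives $\mathbb{E}[Z] \ge \binom{N}{k}^{p(2\epsilon - \epsilon^2)}/\mathrm{poly}$, exponentially large. The second moment decomposes as $\mathbb{E}[Z^2] = \sum_{\mathcal{I}, \mathcal{J}} \Pr[X_{\mathcal{I}}, X_{\mathcal{J}} \ge (1-\epsilon) E_{\mathrm{max}}]$, and I would parameterize the inner sum by the intersection profile $(|I_l \cap J_l|)_{l=1}^p$. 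A direct computation gives the correlation $\mathrm{Cov}(X_{\mathcal{I}}, X_{\mathcal{J}})/\mathrm{Var}(X_{\mathcal{I}}) = \prod_{l=1}^p |I_l \cap J_l|/k$, a product that decays geometrically in $p$. Combining the multinomial count of pairs realizing each profile with the bivariate Gaussian tail---this is the step later formalized in Proposition~\ref{Prop:2ndMom}---should yield $\mathbb{E}[Z^2] \le \mathbb{E}[Z]^2 \exp(N \cdot o_p(1))$; Paley-Zygmund then produces $\Pr[M^* \ge (1-\epsilon) E_{\mathrm{max}}] \ge \exp(-N \cdot o_p(1))$, the sub-constant probability alluded to in the paper's proof sketch.

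To boost this to a high-probability statement I would apply Frieze's trick together with Borell-TIS. Viewed as a function on $\mathbb{R}^{N^p}$, $M^*$ is a maximum of linear forms each of gradient norm $k^{-p/2}$, so $M^*$ is itself $k^{-p/2}$-Lipschitz. Borell-TIS therefore gives $\Pr[|M^* - \mathbb{E}[M^*]| \ge u] \le 2 \exp(-u^2 k^p/2)$. Choosing $u = \epsilon E_{\mathrm{max}}/3$ produces a concentration tail of order $\exp(-\Omega(p \log \binom{N}{k}))$; under Assumption~\ref{Asm:Scaling} this is far below the second-moment probability $\exp(-N \cdot o_p(1))$ whenever $p$ is large, since $\log \binom{N}{k}$ and $N$ are of the same order in the extremal regime $k = \alpha N$ while the factor $p$ dominates $o_p(1)$. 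The standard "median is near the mean is near the target" chain then forces $\mathbb{E}[M^*] \ge (1-\epsilon) E_{\mathrm{max}} - u$, and a second application of Borell-TIS gives $M^* \ge (1 - 2\epsilon) E_{\mathrm{max}}$ with probability $1 - \exp(-\Omega(p \log \binom{N}{k}))$, which absorbs comfortably into the $\Theta(1/\sqrt{\log \binom{N}{k}})$ envelope set by the upper bound after renaming $\epsilon$.

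The hard part will be the second-moment estimate when $k = \Theta(N)$. The intersection profile $(q_l)$ with $q_l = |I_l \cap J_l|/k$ lives in a high-dimensional discrete simplex, and both the combinatorial factor counting pairs at a given profile (a product of $p$ hypergeometric-type terms) and the bivariate Gaussian enhancement $\exp(c \rho)$ with $\rho = \prod_l q_l$ and $c \asymp p \log \binom{N}{k}$ live at the exponential scale $\binom{N}{k}^p$. One must show that the maximizer of the ratio $\mathbb{E}[Z^2]/\mathbb{E}[Z]^2$ sits at the "independent" profile $q_l \approx k/N$ with ratio $1 + o_p(1)$, uniformly over $N$. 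The large-$p$ assumption is essential here: a product of $p$ factors bounded away from $1$ contracts exponentially, so $\rho$ is negligible off a vanishingly small neighborhood of the diagonal, decoupling the near-optimal tuples and pushing the model toward the REM-like prediction of Remark~\ref{remark:E-max}. Once this balance is in hand, the Frieze/Borell-TIS boosting step is comparatively routine.
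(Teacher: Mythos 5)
Your outline follows the paper's proof essentially step for step: the same first-moment/union bound giving the $\Theta\bigl(1/\sqrt{p\log\binom{N}{k}}\bigr)$ failure probability for the upper bound, the same Paley--Zygmund second moment organized by the intersection profile with correlation $\prod_{l}|I_l\cap I_l'|/k$, and the same Frieze-style boosting via Borell--TIS (this is the paper's Proposition~\ref{Prop:2ndMom} combined with Theorem~\ref{thm:borell-TIS}).

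One step, however, is stated in a form that would fail on part of the theorem's range of $k$. You record the second-moment conclusion as $\mathbb{P}[M^*\ge(1-\epsilon)E_{\mathrm{max}}]\ge\exp(-N\cdot o_p(1))$ and then justify the comparison with the Borell--TIS tail $\exp(-\Omega(p\log\binom{N}{k}))$ by noting that $\log\binom{N}{k}$ and $N$ are of the same order when $k=\alpha N$. But Assumption~\ref{Asm:Scaling} also allows $k=o(N)$ (even bounded $k$), where $p\log\binom{N}{k}$ is far smaller than $N$; a bound of the literal form $\exp(-N\,o_p(1))$ would then be \emph{smaller} than the concentration tail, and the ``mean is near the target'' chain would not close. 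The fix is to keep track of the exponent your own computation actually produces: the profile/bivariate-tail analysis gives $\mathbb{E}[Z^2]/\mathbb{E}[Z]^2\le O(1)\cdot\exp\bigl(c_p\,\bar E^2\bigr)+\binom{N}{k}^{-\Theta(1)}$ with $\bar E^2=2p(1-\epsilon)^2\log\binom{N}{k}$ and $c_p=(1-\delta)^{\Theta(p)}\to0$, so the second-moment probability is $\exp\bigl(-o_p(1)\,p\log\binom{N}{k}\bigr)$. Both exponents are then proportional to $p\log\binom{N}{k}$, and for large $p$ the vanishing factor $c_p$ beats the constant $\Theta(\epsilon^2)$ in the Borell--TIS exponent uniformly over all $k$ satisfying Assumption~\ref{Asm:Scaling}; this is exactly how the paper calibrates its deviation $u^*$ so that the two bounds align. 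A related overstatement in your last paragraph: the maximizer of $\mathbb{E}[Z^2]/\mathbb{E}[Z]^2$ need not have ratio $1+o_p(1)$ uniformly in $N$, and the paper proves only the weaker bound $\exp\bigl(o_p(1)\cdot p\log\binom{N}{k}\bigr)$, which still diverges with $N$ for fixed $p$ --- that is precisely why the Frieze/Borell--TIS boosting is needed at all rather than a bare second-moment argument.
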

Namely, in the large-$p$ regime, $M^*$ is approximately $E_{\mathrm{max}}$. The value $E_{\mathrm{max}}$ corresponds to the maximum of $\binom{N}{k}^p$ i.i.d.\,$\cN(0,k^{-p})$ random variables, as mentioned in Remark~\ref{remark:E-max}. When $k=o(N)$ grows sufficiently slowly in $N$, it appears possible to establish Theorem~\ref{thm:LargeTensor} for all $p$ (not just large $p$), through a direct application of the second moment method, which we skip.
\paragraph{Probability Estimate} When $k=o(N)$, the probability guarantee in Theorem~\ref{thm:LargeTensor} is of order $O(1/\sqrt{k\log N})$. When $k=\Theta(N)$, it is $O(1/\sqrt{N})$. A simple modification of our argument yields an improved probability guarantee: for any $\epsilon>0$ and large $p$, we have
\begin{equation}\label{eq:ImproveProb}
    \mathbb{P}\Bigl[(1-\epsilon)E_{\mathrm{max}}\le M^*\le (1+\epsilon)E_{\mathrm{max}}\Bigr]\ge 1-\binom{N}{k}^{-\Theta(1)},
\end{equation}
where the $\Theta(1)$ term depends only on $p$ and $\epsilon$. Notably, when $k=\Theta(N)$, the probability guarantee in~\eqref{eq:ImproveProb} is $1-\exp(-\Theta(N))$. 

We next provide a detailed overview of our proof; see Section~\ref{sec:PfLargeTensor} for the complete proof.
\subsection{Proof Overview for Theorem~\ref{thm:LargeTensor}}\label{sec:PfSketch}
Denote by $\F$ the set of all $p$-tuples $(I_1,\dots,I_p)$ of subsets of $[N]$, where $|I_i|=k$. Let $N_E$ count the number of $(I_1,\dots,I_p)\in \F$ with an objective value at least $E$. Observe from~\eqref{eq:MaxT} that $\{M^*\ge E\} = \{N_E\ge 1\}$. For the upper bound, $M^*\le E_{\mathrm{max}}$, we use the \emph{first moment method}, showing that $\mathbb{E}[N_{E_{\mathrm{max}}}] = o(1)$. Markov's inequality then yields
$$\mathbb{P}[N_{E_{\mathrm{max}}} = 0]\ge 1-\mathbb{E}[N_{E_{\mathrm{max}}}]=1-o(1).$$

The lower bound is more challenging and based on several different ideas outlined below.
\paragraph{Second Moment Method} The first ingredient for the lower bound is the \emph{second moment method}, based on the \emph{Paley-Zygmund Inequality}: for any non-negative integer-valued random variable $N$ with finite variance, $\mathbb{P}[N\ge 1]\ge \mathbb{E}[N]^2/\mathbb{E}[N^2]$.
In Proposition~\ref{Prop:2ndMom}, we apply this inequality to $N=N_E$ for $E:=(1-\epsilon)E_{\mathrm{max}}$. Estimating $\mathbb{E}[N_E^2]$ entails a summation over all pairs of $p$-tuples $\mathcal{I}=(I_1,\dots,I_p)\in\F$ and $\mathcal{I}'=(I_1',\dots,I_p')\in \F$ which is involved.  To this end, we highlight a reparameterization and the large-$p$ asymptotics. Note that for $\mathcal{I},\mathcal{I}'$ as above, random variables
\begin{equation}\label{eq:INFORMAL}
   \Phi_{\mathcal{I}}:= k^{-\frac{p}{2}}\sum_{i_1\in I_1,\dots,i_p\in I_p}A_{i_1,\dots,i_p} \quad\text{and}\quad \Phi_{\mathcal{I}'}:=k^{-\frac{p}{2}}\sum_{i_1\in I_1',\dots,i_p\in I_p'} A_{i_1,\dots,i_p}
\end{equation}
are both $\cN(0,1)$ with correlation $a_1\cdots a_p/k^p$, where $a_i := |I_i\cap I_i'|$. In general, the correlation can be as large as $1$. Suppose $\mathcal{I}$ and $\mathcal{I}'$ are such that sufficiently many $a_i$ are bounded away from $k$. More precisely, suppose that for some $\delta\in(0,1)$, the number of $a_i$ below $(1-\delta)k$ is equal to some function $\Theta_p(p)$. 
Then, $\mathbb{E}\bigl[\Phi_{\mathcal{I}}\cdot \Phi_{\mathcal{I}'}\bigr]\le (1-\delta)^{\Theta_p(p)}$, which vanishes as $p\to\infty$. In this case, the probability term can be estimated by two i.i.d.\,standard normal random variables with an additional correction term which we control using bounds from the literature~\cite{hashorva2003multivariate,hashorva2005asymptotics}. We couple this with a counting estimate (Lemma~\ref{lemma:BinomCoeff}) to show that $\mathbb{E}[N_E^2]$ is `dominated' by pairs $(\mathcal{I},\mathcal{I}')$ satisfying a correlation decay similar to above and that the contribution to $\mathbb{E}[N_E^2]$ from $(\mathcal{I},\mathcal{I}')$ having a substantial overlap is negligible. This yields,
\begin{equation}\label{eq:INFORMAL-2}    
\mathbb{P}[N_E\ge 1]\ge \frac{\mathbb{E}[N_E]^2}{\mathbb{E}[N_E^2]} \ge \exp\left(-c^p \bar{E}^2\right),\quad\text{where}\quad \bar{E} = k^{p/2}(1-\epsilon)E_{\mathrm{max}}
\end{equation}
for a $c<1$, ignoring various constants. Notably, this guarantee is exponentially small in $\log\binom{N}{k}$, indicating that the standard second moment method fails. However, as $p$ increases, the bound~\eqref{eq:INFORMAL-2} improves. Combined with a concentration property, this feature allows refining~\eqref{eq:INFORMAL-2}.
\paragraph{Repairing the Second Moment Bound via Borell-TIS inequality}  We now repair the bound~\eqref{eq:INFORMAL-2} by showing that $M^*$ is well-concentrated around its mean. The argument below goes back to~\cite{frieze1990independence}, who used it to determine the size of the largest independent sets in sparse \ER random graph $\mathbb{G}(n,\frac{d}{n})$ in the regime the average degree $d$ is large.

Note that $M^* = \max_{\mathcal{I}\in \F} k^{-\frac{p}{2}}\Phi_{\mathcal{I}}$. We first show, using the first moment method, that the process $\{\Phi_{\mathcal{I}}:\mathcal{I}\in \F\}$ is almost surely bounded. We then employ Borell-TIS inequality, a fundamental result in the study of Gaussian processes~\cite{adler2009random}, to obtain
\[\mathbb{P}\Bigl[M^*-\mathbb{E}[M^*]>u\Bigr]\le \exp\left(-\frac{u^2 k^p}{2}\right)\quad\text{and}\quad 
\mathbb{P}\Bigl[\Bigl|M^*-\mathbb{E}[M^*]\Bigr|>u\Bigr]\le 2\exp\left(-\frac{u^2 k^p}{2}\right)
\]
for any $u>0$. Take $u=\epsilon E_{\mathrm{max}}$. Since $c<1$, we obtain that for all sufficiently large $p$,
\begin{align*}
&\mathbb{P}\Bigl[M^*\ge (1-\epsilon)E_{\mathrm{max}}\Bigr] = \mathbb{P}\bigl[N_{(1-\epsilon)E_{\mathrm{max}}}\ge 1\bigr] \\
&\ge \exp\Bigl(-c^pk^p(1-\epsilon)^2 E_{\mathrm{max}}^2\Bigr) 
\ge \exp\left(-\frac{k^p\epsilon^2 E_{\mathrm{max}}^2}{2}\right) 
\ge \mathbb{P}\Bigl[M^*\ge \mathbb{E}[M^*]+\epsilon E_{\mathrm{max}}\Bigr],
\end{align*}
using~\eqref{eq:INFORMAL-2} and the Borell-TIS inequality. In particular,  $\mathbb{E}[M^*]\ge (1-2\epsilon)E_{\mathrm{max}}$. Applying now the second part of Borell-TIS inequality with the same $u$, we further conclude $M^*\ge \mathbb{E}[M^*]-u\ge (1-3\epsilon)E_{\mathrm{max}}$ w.h.p. Since $\epsilon$ is arbitrary, this yields Theorem~\ref{thm:LargeTensor} via a union bound. 
\section{Algorithmic Guarantees}\label{sec:ALGS}
We turn our attention to algorithms. For the submatrix case ($p=2$),~\cite{gamarnik2018finding} proposed the incremental greedy procedure ($\IGP$). For $k=o(\log^{1.5} N)$, the $\IGP$ finds in polynomial time a $k\times k$ submatrix with an average value of $(4/3)\sqrt{(2\log N)/k}$. Being the best known polynomial-time guarantee for the submatrix case, this result suggests a \emph{statistical-computational gap}: the ground-state value  is $2\sqrt{(\log N)/k}$, whereas the best polynomial-time algorithm finds a submatrix whose average is $(4/3)\sqrt{(2\log N)/k}$. This gap remains an open problem; improving algorithms or showing matching lower bounds are among the open questions raised in Section~\ref{sec:Future}.

In this section, we introduce the \IGPT, an extension of $\IGP$ to tensors, and establish that it identifies a subtensor with an average entry of $(1+o_{k,N}(1))\frac{2\sqrt{p}}{p+1}E_{\mathrm{max}}$. We begin by describing the $\IGP$, following~\cite{gamarnik2018finding}. 

\paragraph{$\IGP$: Informal Description} Given a matrix $A\in\R^{N\times N}$ with i.i.d.\,\,$\cN(0,1)$ entries, the $\IGP$ algorithm returns a $k\times k$ submatrix with a large average value through the following iterative procedure. Initialize by letting $i_1=1$ and selecting the column $j_1$ corresponding to the largest entry in the first row: $A_{1,j_1} = \max_{1\le j\le N}A_{1,j}$. Choose then the row $i_2\ne i_1$ that has the largest entry in column $j_1$: $A_{i_2,j_1} = \max_{i\ne i_1}A_{i,j_1}$. Next, identify a column $j_2\ne j_1$ maximizing the sum of the entries in rows $i_1$ and $i_2$: $A_{i_1,j_2} + A_{i_2,j_2} = \max_{j\ne j_1} A_{i_1,j} + A_{i_2,j}$.  Repeat this iteratively until $k$ rows $\{i_1,\dots,i_k\}$ and $k$ columns $\{j_1,\dots,j_k\}$ are selected. Gamarnik and Li introduce a slight modification to this procedure by partitioning $N$ indices into $k$ equally sized, disjoint blocks and restricting the choices of $i_t$ and $j_t$ to $t$. This adjustment circumvents dependencies between indices. Furthermore, 
as $k=o(\log^{1.5} N)$, $\log N=(1+o(1))\log(N/k)$, ensuring that the algorithmic guarantee remains unchanged. In Section~\ref{sec:AlgInformal}, we provide an informal argument based on integral approximation to justify that the $\IGP$ identifies a $k\times k$ submatrix with an average entry $(4/3)(1+o(1))\sqrt{2\log N/k}$.

\subsection{\IGPT: Extending $\IGP$ Algorithm to Tensors}\label{Sec:Extend-IGP}
Given $N,k\in\mathbb{N}$, we partition $[N]$ into $k+1$ disjoint subsets as follows. Let \begin{equation}\label{eq:Block-i}
P_i:=\bigl\{(i-1)\lfloor N/k\rfloor+1, (i-1)\lfloor N/k\rfloor+2,\dots,i\lfloor N/k\rfloor\bigr\},
\end{equation}
for $i\in[k]$. For $i=k+1$, we set $P_{k+1}=\varnothing$ if $N$ is divisible by $k$; otherwise $P_{k+1}:=\{k\lfloor N/k\rfloor+1,\dots,N\}$, which will not be used. Our algorithm is as follows.
\begin{algorithm}
\caption{Incremental Greedy Procedure for Tensors (\IGPT)}\label{IGPT}  
\begin{algorithmic}
\STATE \textbf{Input}: An order-$p$ tensor $\boldsymbol{A}\in(\R^N)^{\otimes p}$ and an integer $k\ge 1$.
\STATE \textbf{Initialization (Step 1)}: For $1\le u\le p-1$, select $i^{(u)} \in P_1$ arbitrarily, set $I_u = \{i^{(u)}\}$. 
\STATE \textbf{Initialization (Step 2)}: Let $i^{(p)} = \arg\max_{j\in P_1} A_{i^{(1)},\dots,i^{(p-1)},j}$. Set $I_p = \{i^{(p)}\}$.
\STATE \textbf{Loop-1}: Repeat until $|I_1|=|I_2|=\cdots=|I_p|=k$.  \\
$\quad$ \textbf{Loop-2}: For each $1\le u\le p$, let
\[
i^{(u)}  = \arg\max_{j\in P_{|I_u|+1}} \sum_{\substack{i_1\in I_1,\dots,i_{j-1}\in I_{j-1} \\ i_{j+1}\in I_{j+1},\dots,i_p\in I_p}} A_{i_1,\dots,i_{j-1},j,i_{j+1},\dots,i_p},
\]
$\quad$ breaking ties arbitrarily.  Set $I_u=I_u\cup\{i^{(u)}\}$.
\STATE \textbf{Output}: $I_1,\dots,I_p$.
\end{algorithmic}
\end{algorithm}

Observe that when $p$ is constant, the runtime of \IGPT\, is polynomial in $N$ and $k$. Our next result establishes the following guarantee for the output of the \IGPT. 
\begin{theorem}\label{thm:IGPT}
Suppose that $\boldsymbol{A}$ has i.i.d.\,$\cN(0,1)$ entries, $k=o(\log^{1.5} N)$, and  $(I_1,\dots,I_p)$ is the output of \IGPT\,\,on $\boldsymbol{A}$. There exists an $N_0$ such that for all $N\ge N_0$, 
\[
\frac{1}{k^p} \sum_{i_1\in I_1,\dots,i_p\in I_p}A_{i_1,\dots,i_p} = \left(1+o_{k,N}(1)\right)\frac{2\sqrt{p}}{p+1}E_{\mathrm{max}}
\]
with probability $1-O(k/\log^{1.5}N)$.
\end{theorem}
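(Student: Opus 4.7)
The plan is to analyze \IGPT\ step-by-step, exploiting the disjointness of the blocks $P_1,\dots,P_k$ from~\eqref{eq:Block-i} to reduce each iteration to a maximum over independent Gaussians. Let $I_u^{(t)}$ denote the state of $I_u$ after it contains $t$ elements, and let $i^{(u,t)}\in P_t$ denote the element added at step $t$. Since every $i_u\in I_u$ equals $i^{(u,t_u)}$ for a unique $t_u\in[k]$, the final sum decomposes as $S:=\sum_{i_1\in I_1,\dots,i_p\in I_p}A_{i_1,\dots,i_p}=\sum_{t_1,\dots,t_p=1}^{k}A_{i^{(1,t_1)},\dots,i^{(p,t_p)}}$, which I would group by $M:=\max_v t_v$ and, within each $M$, by the subset $T:=\{v:t_v=M\}$ of ``newest'' coordinates.

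At iteration $t+1$, the algorithm maximizes, for each $u\in[p]$, the functional
\[
\Psi_u^{(t+1)}(j):=\sum_{\substack{i_v\in I_v^{(t)}\\ v\ne u}}A_{i_1,\dots,i_{u-1},j,i_{u+1},\dots,i_p}\quad\text{over}\quad j\in P_{t+1}.
\]
Because $P_{t+1}$ is disjoint from $P_1\cup\cdots\cup P_t$, the entries defining $\{\Psi_u^{(t+1)}(j):j\in P_{t+1}\}$ are fresh and independent of everything computed earlier, so these are i.i.d.~$\cN(0,t^{p-1})$ conditional on the history. Applying the Borell--TIS inequality with deviation $r=\sqrt{3\,t^{p-1}\log\log(N/k)}$ together with the classical estimate $\mathbb{E}[\max_{j\in P_{t+1}}Z_j]=\sqrt{2\log(N/k)}(1+o(1))$ for i.i.d.\,standard Gaussians $Z_j$, I obtain, with probability $1-O((\log N)^{-3/2})$, the bound $\max_{j\in P_{t+1}}\Psi_u^{(t+1)}(j)=(1+o(1))\sqrt{t^{p-1}\cdot 2\log(N/k)}$. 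A union bound over all $p(k-1)$ single-step events yields total failure probability $O(k/\log^{1.5}N)$.

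The contribution to $S$ from ``diagonal'' multi-indices ($|T|=1$) equals, by construction of the algorithm, $\sum_{u=1}^p\max_{j\in P_M}\Psi_u^{(M)}(j)$ at level $M$; summing over $M$ and $u$ and applying an integral approximation gives
\[
\text{diagonal}\;=\;(1+o(1))\,p\sqrt{2\log(N/k)}\sum_{m=1}^{k-1}m^{(p-1)/2}\;=\;(1+o(1))\,\frac{2p}{p+1}\,k^{(p+1)/2}\sqrt{2\log(N/k)}.
\]
``Off-diagonal'' multi-indices ($|T|\ge 2$) correspond to entries of $\boldsymbol{A}$ that \IGPT\ never queries---it only probes entries with exactly one coordinate in $P_M$ at step $M$---so conditionally on the trajectory, their sum is a centered Gaussian with total variance $\sum_{M,|T|\ge 2}\binom{p}{|T|}(M-1)^{p-|T|}=O(k^{p-1})$, hence of magnitude $O(k^{(p-1)/2})$, smaller than the diagonal by a factor $O(1/(k\sqrt{\log N}))=o(1)$.

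Dividing by $k^p$ and using $\log\binom{N}{k}=(1+o(1))\,k\log(N/k)$ (valid for $k=o(\log^{1.5}N)$ by Stirling), the ratio to $E_{\mathrm{max}}=(1+o(1))\sqrt{2p\log(N/k)/k^{p-1}}$ collapses to $(2p/(p+1))/\sqrt{p}=2\sqrt{p}/(p+1)$, as required. The main obstacle is the \emph{sharpness} of the single-step Gaussian-maximum concentration: to preserve $(1+o(1))$ multiplicative accuracy after union-bounding over $k-1$ iterations, the per-step failure probability must decay like $(\log N)^{-3/2}$, which Borell--TIS delivers only when $r\asymp\sqrt{\log\log(N/k)}$. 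This is precisely where the $k=o(\log^{1.5}N)$ threshold enters, mirroring the submatrix analysis of~\cite{gamarnik2018finding}, and is the reason the theorem is currently restricted to this small-$k$ regime.
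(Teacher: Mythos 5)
Your high-level strategy (block-by-block maximization of fresh Gaussians, per-step concentration with failure probability $O(\log^{-1.5}N)$, a union bound over the $O(pk)$ steps, and the integral approximation $\sum_t t^{(p-1)/2}\approx\frac{2}{p+1}k^{(p+1)/2}$) is the same as the paper's, and your Borell--TIS-plus-expected-maximum bound is an acceptable substitute for Lemma~\ref{Lemma:Gauss-Max}. However, there is a genuine gap: you analyze a \emph{parallel-update} variant of \IGPT, not Algorithm~\ref{IGPT}. In Loop-2 the sets $I_v$ are updated sequentially within a single step, so when coordinate $u\ge 2$ is selected at step $t$, the maximized functional already contains the indices $i^{(v)}\in P_t$ chosen moments earlier for $v<u$; this is precisely why the paper's conditional variance is $\Pi_{j,t}=t^{j-1}(t-1)^{p-j}$ rather than your $(t-1)^{p-1}$. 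Two of your claims break as a result. First, the selected index $i^{(u,t)}$ is \emph{not} the argmax of your $\Psi_u^{(t)}$, so your ``diagonal'' contribution at level $M$ is not a maximum of i.i.d.\ Gaussians; it is the value of $\Psi_u^{(M)}$ at the maximizer of a different, larger functional, and equating it with $(1+o(1))\sqrt{2(M-1)^{p-1}\log(N/k)}$ needs a separate argument. Second, and more seriously, your assertion that \IGPT\ ``never queries'' entries with $|T|\ge 2$ is false: entries with two or more coordinates in $P_M$ do appear inside the functionals maximized at step $M$ for $u\ge 2$, so conditionally on the trajectory they are biased upward, not centered, and your $O(k^{p-1})$-variance bound does not apply as stated. (Their conditional mean can be shown to be $O(k^{(p-1)/2}\sqrt{\log N})$, a factor $1/k$ below the main term, so the conclusion is salvageable, but that argument is absent.)

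The paper avoids both issues through the exact identity~\eqref{MU}: every multi-index $(i_{t_1}^{(1)},\dots,i_{t_p}^{(p)})$ of the output subtensor lies in exactly one ``shell''---take $t=\max_v t_v$ and $j$ the last coordinate attaining it---and the sum over shell $(j,t)$ is \emph{exactly} the quantity $M_{j,t}$ the algorithm maximizes, by the telescoping count $\sum_{j=1}^p t^{j-1}(t-1)^{p-j}=t^p-(t-1)^p$. Hence the subtensor sum equals the initial entry plus $\sum_{j,t}M_{j,t}$ with no off-diagonal remainder at all, and each $M_{j,t}$ is a genuine maximum of $\lfloor N/k\rfloor$ i.i.d.\ $\cN(0,\Pi_{j,t})$ variables, independent across $(j,t)$. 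If you either adopt this shell decomposition or explicitly bound the conditional bias of your $|T|\ge 2$ terms for the sequential algorithm, your proof goes through; as written, it establishes the theorem only for a modified algorithm.
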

See Section~\ref{IGPT-Proof} for the proof. Theorem~\ref{thm:IGPT} asserts that \IGPT\,\,finds a subtensor with an average value $O_p(\frac{1}{\sqrt{p}})E_{\mathrm{max}}$ w.h.p., provided $k=o(\log^{1.5}N)$. The $o_{k,N}(1)$ term converges to zero as $k,N\to\infty$; it can be inferred from our proof. 

Our argument is based on (a) a quantitative version of the fact that the maxima of i.i.d.\,standard normals, appropriately centered and normalized, converges to a Gumbel distribution, and (b) an integral approximation. We reproduce the former fact from~\cite{gamarnik2018finding} as Lemma~\ref{Lemma:Gauss-Max}. Importantly, Lemma~\ref{Lemma:Gauss-Max} holds with probability $1-O(\log^{-1.5}N)$, leading to the requirement $k=o(\log^{1.5} N)$.\footnote{The same technical assumption was also required in~\cite{gamarnik2018finding}.} Extension beyond this regime is left for future work.
\section{Algorithmic Barriers from the Overlap Gap Property}\label{sec:OGP}
We identified the ground-state value $E_{\mathrm{max}}$ in Section~\ref{sec:ground-state} and introduced the \IGPT\,\,in Section~\ref{sec:ALGS}. Theorem~\ref{thm:IGPT} establishes that \IGPT\, identifies a subtensor with an average entry of $\frac{2\sqrt{p}}{p+1}E_{\mathrm{max}}$, which falls short of optimality by an  $O_p(\frac{1}{\sqrt{p}})$ factor. Recall that \IGPT\,\,is an extension of $\IGP$~\cite{gamarnik2018finding}, originally developed for submatrices,  where a similar gap between the ground-state and algorithmic guarantees also exists. These observations suggest a \emph{statistical-computational gap} for the large average subtensor problem, raising several fundamental questions. How well can algorithms perform? What are the inherent algorithmic barriers?

Our next focus is on the algorithmic lower bounds for this model. A widely used approach for proving algorithmic hardness of random optimization problems similar to~\eqref{eq:MaxT} is to show that near-optimal solutions exhibit the Overlap Gap Property (OGP)---a fundamental barrier for a broad class of algorithms. Introduced in~\cite{gamarnik2014limits}, the OGP approach has since led to nearly sharp algorithmic lower bounds for a broad array of random computational problems, see, e.g.,~\cite{gamarnik2020low,bresler2022algorithmic,wein2020optimal,gamarnik2022algorithms,gamarnik2023geometric,gamarnik2023shattering,gamarnik2023algorithmic,li2024discrepancy,gamarnik2025optimal,du2025algorithmic,huang2025tight,huang2025strong,huang2025statistical,sellke2025tight}. The literature on the OGP is extensive; we do not attempt a comprehensive overview. Instead, we refer the reader to the survey~\cite{Gamarnik_2021} and to introductions of~\cite{bresler2022algorithmic,gamarnik2022algorithms,gamarnik2023shattering}.

In what follows, we show that near ground-states of our model exhibit the ensemble multi OGP ($m$-OGP). We begin by formally defining the set of near ground-states under investigation.
\begin{definition}\label{Def:Overlap-Set-correlated-generic-subtensor}
    Given $m\in\mathbb{N}$,  $\frac{1}{2}<\nu_1<\nu_2<1$, $\gamma>0$, and $\mathcal{J} \subset [0,\pi/2]$, define by $S_\gamma(m,\nu_1,\nu_2,\mathcal{J})$ the set of all $m$-tuples $\left(\mathcal{I}_1,\dots,\mathcal{I}_m\right)$ satisfying the following conditions:
    \begin{itemize}
        \item [(a)] For any $i\in[m]$, the multi-index $\mathcal{I}_{i}$ consists of $p$-tuples $\mathcal{I}_{i} = \left(I_{1,i},\ldots, I_{p,i}\right)$ such that for any $1\le j\le p$, $I_{j,i}\subset[N]$ and $|I_{j,i}|=k$.
        \item[(b)] For any $1\le i_1<i_2\le m$, 
        \[
        {\frac{|I_{j,i_1}\cap I_{j,i_2}|}{k} \in[\nu_1,\nu_2],
        \quad \forall\, 1\leq j \leq p}.
        \]
        \item[(c)] There exists $\tau_{1},\ldots,\tau_{m} \in \mathcal{J}$ such that 
        \[
        \frac{1}{k^{p/2}} \sum_{i_1\in I_{1,\ell},\dots,i_p\in I_{p,\ell}} \hat{A}^{(\ell)}_{i_1,\dots,i_p} (\tau_\ell)\ge \gamma k^{p/2} E_{\mathrm{max}}
        \,,
        \quad 
        \forall 1 \leq \ell \leq m\,,
        \]
        where for any $1\leq \ell \leq m$, $\tau \in [0,\pi/2]$ and $1\le i_1,\dots,i_p\le N$ 
        \begin{align}\label{eq:Interpolate}
            \hat{A}^{(\ell)}_{i_1,\dots,i_p}(\tau)
            &=
            \cos(\tau)
            A^{(0)}_{i_1,\dots,i_p}
            +
            \sin(\tau)
            A^{(\ell)}_{i_1,\dots,i_p}
        \end{align}
        for i.i.d $A^{(\ell)}_{i_1,\ldots,i_p}\sim \mathcal{N}(0,1)$.
    \end{itemize}
\end{definition}
The set $S_\gamma(m,\nu_1,\nu_2,\mathcal{J})$ consists of $m$-tuples of near ground-states w.r.t.\,correlated tensors $\hat{A}^{(\ell)}(\tau)$. Each solution $\mathcal{I}_i$ is a $p$-tuple of size-$k$ subsets of $[N]$, representing indices of a $k\times \cdots \times k$, order-$p$ subtensor. The parameter $\gamma$ quantifies the proximity to the ground-state value $E_{\mathrm{max}}$; specifically, $S_\gamma$ consists of solutions exceeding the threshold $\gamma E_{\mathrm{max}}$. The set $\mathcal{J}$ is used for generating correlated instances through the interpolation scheme given in~\eqref{eq:Interpolate}. 

The parameters $\nu_1<\nu_2$ impose a constraint on the intersections of the index sets. Specifically, for any $\mathcal{I}_{i_1},\mathcal{I}_{i_2}$ and any coordinate $1\le j\le p$, condition {(b)} asserts that the corresponding index sets, $\mathcal{I}_{j,i_1}$ and $\mathcal{I}_{j,i_2}$ have an `intermediate' overlap. Thus,  $S_\gamma$ consists of near-optimal $m$-tuples with intermediate pairwise  overlaps. Whenever $S_{\gamma}$ is empty, there is no $m$-tuple of near-optima with intermediate overlaps, highlighting an \emph{overlap gap}.

Our final main result establishes that the large average subtensor problem exhibits the ensemble $m$-OGP when $p$ is large: $S_\gamma$ is empty with suitable $m,\nu_1$ and $\nu_2$. 
\begin{theorem}\label{thm:M-OGP-generic-tensor-correlated}
For any $m\in\mathbb{N}$ and $\gamma>1/\sqrt{m}$, there exists $P:=P(\gamma)\in\mathbb{N}$, $1/2<\nu_1<\nu_2<1$ and $c>0$ such that the following holds. Fix any $p\ge P$ and $\mathcal{J} \subset [0,\pi/2]$ with $|\mathcal{J}|\le \binom{N}{k}^c$. 
    Then,
    \[
\mathbb{P}\Bigl[S_\gamma(m,\nu_1,\nu_2,\mathcal{J})=\varnothing\Bigr]\ge 1-\binom{N}{k}^{-\Theta(1)}
    \]
    for all sufficiently large $k$ and $N$, where $\Theta(1)$ term depends only on $p,\gamma$ and $m$.
\end{theorem}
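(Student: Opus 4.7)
The plan is to prove Theorem~\ref{thm:M-OGP-generic-tensor-correlated} by the first moment method applied to $Z:=|S_\gamma(m,\nu_1,\nu_2,\mathcal{J})|$, showing that $\mathbb{E}[Z]\le \binom{N}{k}^{-\Theta(1)}$ so that Markov's inequality concludes. Setting
\[
\Phi_{\mathcal{I}_\ell}(\tau_\ell):=k^{-p/2}\sum_{i_1\in I_{1,\ell},\dots,i_p\in I_{p,\ell}}\hat{A}^{(\ell)}_{i_1,\dots,i_p}(\tau_\ell),
\]
one has
\[
\mathbb{E}[Z]=\sum_{(\tau_\ell)\in\mathcal{J}^m}\;\sum_{(\mathcal{I}_\ell)\text{ satisfying (b)}}\mathbb{P}\Bigl[\Phi_{\mathcal{I}_\ell}(\tau_\ell)\ge \gamma k^{p/2}E_{\mathrm{max}},\ \forall \ell\Bigr],
\]
and the argument splits naturally into controlling the probability and counting the $m$-tuples.

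For the probability, I would exploit the large-$p$ near-independence. Each $\Phi_{\mathcal{I}_\ell}(\tau_\ell)\sim\mathcal{N}(0,1)$ because $\cos^2(\tau_\ell)+\sin^2(\tau_\ell)=1$, and for $\ell_1\neq \ell_2$ only the shared background $A^{(0)}$ contributes, giving covariance $\cos(\tau_{\ell_1})\cos(\tau_{\ell_2})\prod_{j=1}^{p}|I_{j,\ell_1}\cap I_{j,\ell_2}|/k$. By condition (b) each off-diagonal entry of the $m\times m$ covariance matrix $\Sigma$ is bounded in absolute value by $\nu_2^{p}$, so $\Sigma=I+E$ with $\|E\|_{\max}\le \nu_2^{p}\to 0$ as $p\to\infty$. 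Applying a multivariate Gaussian tail estimate of the type in~\cite{hashorva2003multivariate,hashorva2005asymptotics} (already used in the ground-state analysis) with threshold vector $\mathbf{t}=\gamma k^{p/2}E_{\mathrm{max}}\mathbf{1}_m$, and expanding $\mathbf{t}^{T}\Sigma^{-1}\mathbf{t}=m\gamma^{2}k^{p}E_{\mathrm{max}}^{2}(1-O(\nu_2^{p}))$, yields a uniform-in-$(\mathcal{I}_\ell,\tau_\ell)$ estimate
\[
\mathbb{P}[\cdots]\le \binom{N}{k}^{-m\gamma^{2}p(1-\delta_p)},\qquad \delta_p=O(\nu_2^{p}).
\]

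For the counting, I would fix $\mathcal{I}_1$ freely (contributing $\binom{N}{k}^p$) and, for each subsequent $\mathcal{I}_i$, retain only its pairwise overlap constraint with $\mathcal{I}_1$. The count per coordinate is $\sum_{a=\nu_1 k}^{\nu_2 k}\binom{k}{a}\binom{N-k}{k-a}$, and by hypergeometric large-deviation / entropy estimates this is at most $\binom{N}{k}^{1-\eta(\nu_1,\nu_2)}$, where $\eta(\nu_1,\nu_2)\to 1$ as $\nu_1\to 1^{-}$ under Assumption~\ref{Asm:Scaling} (direct Stirling when $k=o(N)$ yields $\eta\approx \nu_1$; KL-divergence / Sanov-type bounds when $k=\Theta(N)$ yield $\eta=\rho(\nu_1)/h(k/N)$ with $\rho$ the relative entropy of $\nu_1$ versus the typical overlap $k/N$). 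This gives a total count bound of $\binom{N}{k}^{mp-(m-1)p\eta}$, so combined with $|\mathcal{J}|^m\le \binom{N}{k}^{cm}$,
\[
\mathbb{E}[Z]\le \binom{N}{k}^{cm+p\left[m(1-\gamma^{2}(1-\delta_p))-(m-1)\eta\right]}.
\]

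To close the argument, I would choose $\nu_1$ close enough to $1$ that $\eta(\nu_1,\nu_2) > m(1-\gamma^{2})/(m-1)$---feasible precisely because $\gamma>1/\sqrt m$ forces the right-hand side strictly below $1$---fix any $\nu_2\in(\nu_1,1)$, pick $c$ small, then take $P$ large enough that $\delta_p$ is negligible and the bracketed exponent becomes a strictly negative $\Theta(1)$ depending only on $p,\gamma,m$. The main obstacle I anticipate is making the multivariate Gaussian tail estimate quantitative and uniform in $\Sigma$: one must control both $\mathbf{t}^{T}\Sigma^{-1}\mathbf{t}$ and the Mills-ratio polynomial prefactor over covariance matrices whose off-diagonals can have arbitrary signs and magnitudes up to $\nu_2^{p}$, and verify that the prefactor is absorbed by the polynomial slack in $\binom{N}{k}^{-\Theta(1)}$. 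The large-$p$ regime, which enforces $\|E\|_{\max}\le \nu_2^{p}\ll 1$ and keeps $\Sigma$ uniformly well-conditioned, is exactly what makes this tractable without invoking deeper spin-glass machinery.
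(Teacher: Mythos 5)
Your proposal is correct and follows the same overall route as the paper: a first moment bound on $|S_\gamma|$, a union bound over $\mathcal{J}^m$ costing $\binom{N}{k}^{cm}$, a count of admissible $m$-tuples anchored at a free $\mathcal{I}_1$ with each subsequent $\mathcal{I}_i$ charged only its overlap with $\mathcal{I}_1$ (this is exactly the paper's bound on $C_0$ in Proposition~\ref{proposition:C0-C1-bounds}), a multivariate Gaussian upper tail of the form $\binom{N}{k}^{-m\gamma^2 p(1-o_p(1))}$, and the closing choice of $\nu_1$ near $1$, then $P$ large, then $c$ small, with $\gamma>1/\sqrt{m}$ entering precisely as you say. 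The one genuine deviation is in how the correlated covariance is handled: the paper first invokes Slepian's lemma (Theorem~\ref{lemma:Slepian}) to replace the $\tau$-dependent covariances $\cos\tau_i\cos\tau_j\prod_q(\nu_2-\eta(i,j,q))$ by the $\tau=0$ ones, and then applies Theorem~\ref{thm:multiv-tail} after a perturbation analysis around $\Sigma_0=(1-\nu_2^p)I+\nu_2^p\boldsymbol{1}\boldsymbol{1}^T$ (Lemma~\ref{lemma:auxilary-results}, via Sherman--Morrison and Wielandt--Hoffman, which is where the requirement that $\nu_2-\nu_1$ be small enters), whereas you apply the tail bound directly to the correlated covariance, viewing it as a perturbation of the identity with off-diagonals in $[0,\nu_2^p]$. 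Your variant is sound and even slightly simpler: since $\|\Sigma-I\|_2\le m\nu_2^p\to 0$ as $p\to\infty$, the eigenvalues, the determinant, the entrywise positivity of $\Sigma^{-1}\boldsymbol{t}$, and the Mills-ratio prefactor are all controlled uniformly over the family, so the "main obstacle" you flag is exactly the (routine) content of the paper's Lemma~\ref{lemma:auxilary-results}, and you do not need $\nu_2-\nu_1$ small. Two cosmetic points: your displayed formula for $\mathbb{E}[Z]$ with the sum over $(\tau_\ell)$ should be an inequality (union bound over the existential quantifier in condition (c)), not an equality; and the off-diagonal covariances here are in fact nonnegative, so "arbitrary signs" is an overstatement, though bounding them in absolute value is harmless. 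Your explicit requirement $\eta(\nu_1,\nu_2)>m(1-\gamma^2)/(m-1)$ correctly handles both regimes $k=o(N)$ and $k=\Theta(N)$, matching the paper's two-case analysis.
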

\paragraph{Proof Sketch} Our proof relies on the \textit{first moment method}. 
Informally, we show that the tails of correlated $m$-tuples of near-optimal solutions behave as $m$ i.i.d.\,centered Gaussians with an additional correction term that vanishes as $p\to\infty$. We establish this using multivariate normal tail bounds~\cite{hashorva2003multivariate,hashorva2005asymptotics} as well as Slepian's Gaussian comparison inequality~\cite{slepian1962one}. We combine the resulting tail bound with a counting estimate on the number of admissible $m$-tuples that satisfy the overlap constraints, i.e., conditions {$\mathrm {(a)}$}-{$\mathrm (b)$} in Definition~\ref{Def:Overlap-Set-correlated-generic-subtensor} to show that $\mathbb{E}\left[\left|S_\gamma(m,\nu_1,\nu_2,\mathcal{J})\right|\right] = \binom{N}{k}^{-\Theta(1)}$ for suitable parameters. Markov's inequality then yields Theorem~\ref{thm:M-OGP-generic-tensor-correlated}. For the complete proof, see Section~\ref{sec:proof-MOGP}.

\paragraph{Algorithmic Lower Bounds and Sharpness of $m$-OGP} 
The ensemble $m$-OGP established in Theorem~\ref{thm:M-OGP-generic-tensor-correlated} is a rigorous barrier for a broad class of algorithms that exhibit input stability.\footnote{Informally, an algorithm $\mathcal{A}$ is stable if for $\boldsymbol{A}$ with a small $\|\boldsymbol{A}-\boldsymbol{A}'\|$, the outputs $\mathcal{A}(\boldsymbol{A})$ and $\mathcal{A}(\boldsymbol{A}')$ are close in an appropriate metric. For a formal statement, see, e.g.,~\cite{gamarnik2022algorithms,gamarnik2023algorithmic}.} This class includes widely used algorithms such as approximate message passing~\cite{GamarnikJagannathAMP}, low-degree polynomials~\cite{gamarnik2020low,bresler2022algorithmic}, and low-depth Boolean circuits~\cite{gamarnik2024hardness}, among others. In our setting, Theorem~\ref{thm:M-OGP-generic-tensor-correlated} implies that for any $\gamma>0$, there is a $P_\gamma\in\mathbb{N}$ such that for any $p\ge P_\gamma$, sufficiently stable algorithms fail to find a subtensor with an average entry above $\gamma E_{\mathrm{max}}$. This conclusion follows by directly adapting the techniques of~\cite{GamarnikJagannathAMP,gamarnik2022algorithms,gamarnik2024hardness}, which we skip for brevity. 

Next, we demonstrate that the $m$-OGP is asymptotically tight. From Theorem~\ref{thm:IGPT}, \IGPT\, identifies subtensors with an average entry $O_p(\frac{1}{\sqrt{p}})E_{\mathrm{max}}$. Simultaneously, Theorem~\ref{thm:M-OGP-generic-tensor-correlated} establishes the existence of a sequence $(\gamma_p)_{p\in\mathbb{N}}$ with $\lim_{p\to\infty}\gamma_p = 0$, such for tensors of order $p$, the model exhibits $m$-OGP above the threshold $\gamma_p E_{\mathrm{max}}$. Taken together, we conclude that $m$-OGP is asymptotically sharp: the gap between the $m$-OGP threshold and the algorithmic guarantee vanishes as $p\to\infty$. This behavior mirrors the Ising $p$-spin glass model where the algorithmic threshold for its formal $p\to\infty$ limit---namely the REM---is at zero~\cite{addario2020algorithmic}, and the $m$-OGP threshold similarly approaches to zero as $p\to\infty$~\cite{gamarnik2023shattering}.
\begin{remark}\label{remark:branching-tight}
We highlight a forthcoming work by David Gamarnik and Shuyang Gong~\cite{BranchingOGPSharp}. They show that the onset of the branching OGP---a sophisticated variant of the OGP---occurs precisely at $\frac{2\sqrt{p}}{p+1}$, yielding a matching lower bound and suggesting that \IGPT\,\,is essentially optimal. The branching OGP was originally introduced in the context of spin glasses~\cite{huang2025tight}, where it led to the sharpest known algorithmic lower bounds. The argument of~\cite{BranchingOGPSharp} leverage the online nature of \IGPT---specifically, its sequential selection of the largest entries---along with a branching OGP framework tailored online algorithms for the graph alignment problem~\cite{du2025algorithmic}.
\end{remark}
\section{Open Questions and Future Work}\label{sec:Future}
Our work introduces the large average subtensor problem and identifies several promising research directions.
Extending beyond Gaussian ensembles---e.g., to tensors with i.i.d.\,entries drawn from sufficiently regular distributions---remains an open avenue. The argument outlined in Section~\ref{sec:PfSketch} would still apply, provided a second moment guarantee as in~\eqref{eq:INFORMAL-2} and concentration of $M^*$ hold. In the Gaussian case, the latter follows from Borell-TIS inequality; for sub-Gaussians, techniques such as chaining might help, see~\cite{talagrand2005generic,ledoux2013probability,van2014probability}.

\paragraph{Submatrix Case ($p=2$) and Fixed $p$} As noted, rigorous results for the submatrix case when $k=\Theta(N)$ remain elusive. 
A potential approach is to map Boolean spins $\bs$ to Ising ones using the transformation $2\bs -\boldsymbol{1}\in\{-1,1\}^N$, where $\boldsymbol{1}$ is the all-ones vector. This transformation reformulates the problem as a Sherrington-Kirkpatrick model with an external field, which can be analyzed using advanced spin glass techniques, such as those in~\cite{talagrand2006parisi,talagrand2011mean,panchenko2013sherrington}. We leave this ambitious yet challenging question for future work. Our results hold for large $p$; extending them to fixed $p$ presents another intriguing challenge where mapping to Ising $p$-spin model with an external field again offers a promising starting point.
\paragraph{Range of Algorithmic Guarantee} 
The \IGPT\, and its submatrix counterpart, $\IGP$, require $k=o(\log^{1.5}N)$. Similarly, the earlier algorithm $\mathcal{L}\mathcal{A}\mathcal{S}$ has been rigorously analyzed only for fixed $k$, as noted in Remark~\ref{remark:Small-k}. These remain the only known algorithms for both the submatrix and subtensor cases. Extending their applicability beyond $k = o(\log^{1.5} N)$ is an intriguing direction for future work.
\paragraph{Boolean Spin Glasses} As discussed in Section~\ref{sec:Glass}, the problem of finding a principal subtensor with a large average can be reformulated as solving~\eqref{eq:BooleanSpinGlass}. 
In analogy with the classical $p$-spin glass model, we term this the \emph{Boolean $p$-spin glass} model. For the more general case where $I_i$ need not be identical, denote by $\bs_i\in\{0,1\}^N$ the indicator of $I_i$. In this setting,~\eqref{eq:MaxT} can be rewritten as:
\[
\max_{\substack{\bs_i\in\{0,1\}^N \\ \|\bs_i\|_0 = k \\ 1\le i\le p}}\frac{1}{k^p}\sum_{1\le i_1,\dots,i_p\le N}A_{i_1,\dots,i_p}\bs_1(i_1)\cdots \bs_p(i_p),
\]
which we term the \emph{Boolean $p$-partite spin glass} model. (The case $p=2$ is known as bipartite spin glass, see~\cite{auffinger2014free,mckenna2024complexity} for the \emph{spherical case}, $\|\bs\|_2=\sqrt{N}$.) Both the Boolean $p$-spin glass and its $p$-partite counterpart are intriguing models in their own rights. 

To the best of our knowledge, these model---except for the Boolean SK model~\cite{albanese2024boolean}---have not been considered in the physics literature. Both models (with or without the sparsity constraints) offer exciting directions for further study, particularly for physicists and probabilists.

\section{Performance of $\IGP$: An Informal Argument}\label{sec:AlgInformal}
Recall that the $\IGP$ identifies a submatrix with an average value of $(1+o(1))(4/3)\sqrt{2\log N/k}$ w.h.p. Below, we present an informal calculation to justify this guarantee, reproducing material  from~\cite{gamarnik2018finding}.  In what follows, we omit dependencies between random variables, which can be circumvented (see below). 

We estimate $\frac{1}{k^2}\sum_{i\in I,j\in J}A_{i,j}$, using standard results on the maxima of independent normal random variables (see, e.g., Remark~\ref{remark:E-max}). Note that $A_{i_1,j_1}$ is of order $\sqrt{2\log n}$. Similarly, $A_{i_2,j_1}$ is of order $\sqrt{2\log (n-1)}$. Continuing, suppose that $t$ row indices $i_1,\dots,i_t$ and $t-1$ column indices $j_1,\dots,j_{t-1}$ are found, and the column index $j_t$ is defined as $A_{i_1,j_t}+\cdots+A_{i_t,j_t} = \max_{j\notin\{j_1,\dots,j_{t-1}\}} A_{i_1,j}+\cdots+A_{i_t,j}$.
This corresponds to finding the maxima of $N-(t-1)$ $\cN(0,t)$ random variables, which is of order $\sqrt{t}\sqrt{2\log (N-(t-1)} = (1+o(1))\sqrt{t}\sqrt{2\log N}$, using $t\le k = o(\sqrt{\log N})$. Iterating this reasoning together with the integral approximation, $\sum_{1\le k\le n}\sqrt{k} =\int_1^k \sqrt{t}\;dt +O(k)$, we obtain 
\[
\frac{1}{k^2}\sum_{i\in I,j\in J}A_{i,j} =\frac{2(1+o(1))}{k^2}\sqrt{2\log N}\sum_{1\le t\le k}\sqrt{t} = \frac{4}{3}(1+o(1))\sqrt{\frac{2\log N}{k}}.
\]
This reasoning ignores dependencies. Nevertheless,~\cite{gamarnik2018finding} circumvent this issue by partitioning $N$ indices into $k$ equally sized blocks and restricting their search to $t^{\mathrm{th}}$ block at step $t$, as described in Section~\ref{sec:ALGS}. The algorithm \IGPT\, follows the same approach.
\section{Proofs}
\subsection{Proof of Theorem~\ref{thm:LargeTensor}}\label{sec:PfLargeTensor}
In this section, we prove Theorem~\ref{thm:LargeTensor}. We begin by introducing some notation. Define 
\begin{equation}\label{eq:F-of-k}
    \F:=\Bigl\{\mathcal{I}=(I_1,\dots,I_p):I_i\subset[N],|I_i|=k,1\le i\le p\Bigr\}.
\end{equation}
Here, $\mathcal{I}=(I_1,\dots,I_p)$ is a multi-index notation. With this, we set
\begin{equation}\label{eq:N__E}
    N_E :=\sum_{\mathcal{I}\in \F} \ind\Bigl\{\Phi_{\mathcal{I}}>Ek^{p/2}\Bigr\},\quad\text{where}\quad  \Phi_{\mathcal{I}}=\frac{1}{k^{p/2}}\sum_{i_1\in I_1,\dots,i_p\in I_p}A_{i_1,\dots,i_p}.
\end{equation}
Observe that $\Phi_{\mathcal{I}}\sim \cN(0,1)$ for all $\mathcal{I}\in \F$, $M^* = \max_{\mathcal{I}\in \F}\Phi_{\mathcal{I}}/k^{p/2}$ and that for all $E$,
\[
\{M^* \ge E \} = \{N_E\ge 1\}.
\]
We next recall the Gaussian tail bound: for any $x>0$,
\begin{equation}\label{eq:GaussTail}
     \frac{x}{x^2+1}\frac{\exp(-x^2/2)}{\sqrt{2\pi}} \le \mathbb{P}[\cN(0,1)\ge x]\le \frac{\exp(-x^2/2)}{x\sqrt{2\pi}}.
    \end{equation}
Notice in particular that for $x=\omega(1)$, 
\[
    \mathbb{P}[\cN(0,1)\ge x] = \frac{\exp(-x^2/2)}{x\sqrt{2\pi}} \bigl(1+o(1)\bigr).
\]
\paragraph{First Moment Estimate} Observe that
\begin{align*}          
\mathbb{E}[N_E]
&=
\sum_{\mathcal{I}\in \F} \mathbb{P}\bigl[\Phi_{\mathcal{I}}>Ek^{p/2}\bigr]
=
\binom{N}{k}^p \mathbb{P}\Bigl[ \mathcal{N}(0,1) > k^{p/2} E\Bigr].
\end{align*}
Using~\eqref{eq:GaussTail}, we obtain
\[
\mathbb{E}[N_{E_{\mathrm{max}}}] = \Theta\left(\frac{1}{\sqrt{p\log \binom{N}{k}}}\right).
\]
As $\binom{N}{k} = \omega(1)$ per Assumption~\ref{Asm:Scaling}, Markov's inequality yields
\begin{equation}\label{eq:LargeT-Upper}
   \mathbb{P}\bigl[M^*\ge E_{\mathrm{max}}\bigr] = \mathbb{P}[N_{E_{\mathrm{max}}}\ge 1] \le \mathbb{E}[N_{E_{\mathrm{max}}}] = \Theta\left(\frac{1}{\sqrt{p\log \binom{N}{k}}}\right) = o(1). 
\end{equation}
This establishes the upper bound in Theorem~\ref{thm:LargeTensor}. 

We turn our attention to the lower bound. To that end, we establish a technical result.
\begin{proposition}\label{Prop:2ndMom}
   Suppose that Assumption~\ref{Asm:Scaling} holds and $\alpha>0$ is any number satisfying $\frac{N}{k}\ge 1+\alpha$. Fix an arbitrary $\epsilon>0$. Let $f(p)=\epsilon p/2$, $E=(1-\epsilon)E_{\mathrm{max}}$ and
    \begin{equation}\label{eq:Bar-of-E}
          \bar{E} = (1-\epsilon)k^{p/2}E_{\mathrm{max}} = (1-\epsilon)\sqrt{2p\log \binom{N}{k}}.
      \end{equation}
   There exists a $\delta\in(0,1)$, depending only on $\alpha$ and $\epsilon$ such that 
   \[
   \mathbb{P}\left[N_E\ge 1\right]\ge 
    \left(\frac{(1+(1-\delta)^{f(p)})^2}{\sqrt{1-(1-\delta)^{2f(p)}}}\exp\Bigl(\left(1-\delta\right)^{f(p)}\bar{E}^2\Bigr)
    +
    \binom{N}{k}^{-\Theta(1)}\right)^{-1},
   \]
   where the $\Theta(1)$ term depends only on $p$ and $\epsilon$.
\end{proposition}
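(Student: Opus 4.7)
The plan is to apply the Paley--Zygmund inequality
\[
\Pr[N_E\ge 1]\ge \frac{\E{N_E}^2}{\E{N_E^2}}
\]
to $N_E$ with $E=(1-\epsilon)E_{\mathrm{max}}$, and to upper bound the ratio $\E{N_E^2}/\E{N_E}^2$ by the quantity inside the outer parentheses in the target display. The first moment is immediate from~\eqref{eq:GaussTail} and linearity: $\E{N_E}=\binom{N}{k}^p\,\Pr[\cN(0,1)>\bar{E}]$. Everything then reduces to controlling
\[
\E{N_E^2}=\sum_{\mathcal{I},\mathcal{I}'\in\F}\Pr\bigl[\Phi_{\mathcal{I}}>\bar{E},\,\Phi_{\mathcal{I}'}>\bar{E}\bigr].
\]

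The key reparameterization is by the intersection profile $a_i:=|I_i\cap I_i'|$, $i\in[p]$. A direct covariance computation shows that $(\Phi_\mathcal{I},\Phi_{\mathcal{I}'})$ is bivariate standard normal with correlation $\rho(\mathcal{I},\mathcal{I}')=\prod_{i=1}^p(a_i/k)$, and the number of ordered pairs with a prescribed profile equals $\binom{N}{k}^p\prod_{i=1}^p\binom{k}{a_i}\binom{N-k}{k-a_i}$. I would partition $\F\times\F$ into \emph{good} pairs, for which at least $f(p)=\epsilon p/2$ coordinates satisfy $a_i\le(1-\delta)k$, and \emph{bad} pairs, the remainder, where $\delta\in(0,1)$ is a small parameter to be fixed in terms of $\alpha$ and $\epsilon$. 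Writing $\E{N_E^2}=S_{\mathrm{good}}+S_{\mathrm{bad}}$ and dividing each piece by $\E{N_E}^2$ then yields the two summands of the target bound.

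On good pairs the correlation satisfies $\rho\le\rho_\star:=(1-\delta)^{f(p)}$, which is exponentially small in $p$. The sharp bivariate Gaussian upper tail bounds of~\cite{hashorva2003multivariate,hashorva2005asymptotics}, combined with monotonicity of the joint tail in $\rho$, yield
\[
\Pr[\Phi_\mathcal{I}>\bar{E},\Phi_{\mathcal{I}'}>\bar{E}]\le \Pr[\cN(0,1)>\bar{E}]^2\cdot\frac{(1+\rho_\star)^2}{\sqrt{1-\rho_\star^2}}\exp\bigl(\rho_\star\bar{E}^2\bigr)
\]
uniformly over good pairs; summing over all such pairs (bounded by $\binom{N}{k}^{2p}$) and dividing by $\E{N_E}^2$ produces the first summand. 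For bad pairs, the constraint forces at least $(1-\epsilon/2)p$ coordinates to have $a_i>(1-\delta)k$. Together with Assumption~\ref{Asm:Scaling} and the counting estimate of Lemma~\ref{lemma:BinomCoeff}, this yields a per-coordinate loss $\binom{k}{a_i}\binom{N-k}{k-a_i}/\binom{N}{k}\le \binom{N}{k}^{-\eta(\alpha,\delta)}$ for some $\eta>0$; bounding the pair probability trivially by $\Pr[\cN(0,1)>\bar{E}]$ then gives $S_{\mathrm{bad}}/\E{N_E}^2\le\binom{N}{k}^{-\Theta(1)}$.

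The main obstacle is calibrating $\delta$: on good pairs one needs $\rho_\star\bar{E}^2=(1-\delta)^{f(p)}\bar{E}^2$ to remain controlled, which pushes $\delta$ to be small; on bad pairs one needs the per-coordinate counting loss to dominate the potential blow-up of the bivariate tail as $\rho\to 1$, which pushes $\delta$ to be large. Assumption~\ref{Asm:Scaling}, equivalently $N\ge(1+\alpha)k$, enters precisely to guarantee that $\binom{N-k}{k-a_i}$ is sufficiently small when $a_i$ is close to $k$, and together with the choice $f(p)=\epsilon p/2$ leaves enough room to balance the two regimes once $p$ is sufficiently large.
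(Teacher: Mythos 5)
Your proposal is correct and takes essentially the same route as the paper's proof: Paley--Zygmund, reparameterization by the intersection profile $a_i=|I_i\cap I_i'|$, the split according to whether at least $f(p)=\epsilon p/2$ coordinates satisfy $a_i\le(1-\delta)k$, a bivariate Gaussian tail bound with $\rho_\star=(1-\delta)^{f(p)}$ on the weakly correlated pairs, and the trivial single-tail bound combined with Lemma~\ref{lemma:BinomCoeff} on the remaining pairs, exactly as in the paper (which uses Nobel's bivariate bound and chooses $\gamma=\epsilon(2-\epsilon)/2$ in the counting lemma). One small quibble: your closing remark about calibrating $\delta$ has the directions reversed---shrinking $(1-\delta)^{f(p)}$ favors \emph{larger} $\delta$, while Lemma~\ref{lemma:BinomCoeff} forces $\delta$ \emph{small} when $\gamma$ is small---but since the proposition keeps $(1-\delta)^{f(p)}$ explicit, $\delta$ is dictated solely by Lemma~\ref{lemma:BinomCoeff} with $\gamma$ chosen in terms of $\epsilon$, and no genuine tension arises.
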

The proof of Proposition~\ref{Prop:2ndMom} is delicate and based on the \emph{second moment method}, i.e., the Paley-Zygmund inequality, as well as a tracking a certain sum running over $p$-tuples $\mathcal{I}=(I_1,\dots,I_p)\in\F$ of sets $|I_i|=k,1\le i\le N$. See Section~\ref{pf:PropMain} for details.

We now assume Proposition~\ref{Prop:2ndMom} is valid, and prove  Theorem~\ref{thm:LargeTensor}. To that end, we record the following concentration result.
\begin{theorem}[Borell-TIS Inequality]\label{thm:borell-TIS}
    Let $T$ be a topological space and $(f_t)_{t\in T}$ be a centered Gaussian process on $T$ such that $\|f\|_T:=\sup_{t\in T}|f_t|$ is bounded almost surely. Let $\sigma_T^2 = \sup_{t\in T}\mathbb{E}[f_t^2]$. Then, $\mathbb{E}[\|f\|_T]$ and $\sigma_T^2$ are both finite, and for all $u>0$,
    \[
    \mathbb{P}\left[\sup_{t\in T}f_t>\mathbb{E}\left[\sup_{t\in T}f_t\right]+u\right]\le \exp\left(-\frac{u^2}{2\sigma_T^2}\right),
    \]
    and from the symmetry
    \[
    \mathbb{P}\left[\left|\sup_{t\in T}f_t-\mathbb{E}\left[\sup_{t\in T}f_t\right]\right|>u\right]\le 2\exp\left(-\frac{u^2}{2\sigma_T^2}\right).
    \]
\end{theorem}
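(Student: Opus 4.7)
The plan is to reduce Borell-TIS to the standard Gaussian concentration inequality for Lipschitz functions on $\mathbb{R}^n$, which I would invoke as a black box from the literature. The reduction works in two steps: first, by a separability/monotone-approximation argument, it suffices to prove the tail bound on finite index sets $T_n \uparrow T$ and then pass to the limit. We have $\sigma_{T_n}^2 \le \sigma_T^2$ uniformly and $\sup_{t \in T_n} f_t \uparrow \sup_{t \in T} f_t$ almost surely, and (by monotone convergence) $\mathbb{E}[\sup_{T_n} f_t] \to \mathbb{E}[\sup_T f_t]$, so the tail bound for $\sup_T f_t$ follows from its analogue on $T_n$ by taking $n \to \infty$.

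For finite $T = \{t_1, \ldots, t_N\}$, I would factor the covariance as $\mathrm{Cov}(f_{t_i}, f_{t_j}) = \langle a_i, a_j \rangle$ and realize $f_{t_i} = \langle a_i, X\rangle$ for $X \sim \mathcal{N}(0, I_N)$, with $\|a_i\|_2^2 = \mathbb{E}[f_{t_i}^2] \le \sigma_T^2$. The map $F(x) := \max_i \langle a_i, x\rangle$ is then convex and $\sigma_T$-Lipschitz with respect to the Euclidean norm (by Cauchy-Schwarz inside the max). The core ingredient is the Gaussian Lipschitz concentration inequality: if $F : \mathbb{R}^n \to \mathbb{R}$ is $L$-Lipschitz and $X \sim \mathcal{N}(0, I_n)$, then $\mathbb{P}[F(X) - \mathbb{E}[F(X)] > u] \le \exp(-u^2 / (2L^2))$, together with the symmetric lower-tail version. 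This is proved via the Gaussian isoperimetric inequality of Borell and Sudakov-Cirelson, or equivalently via Herbst's argument applied to Gross's log-Sobolev inequality; I would cite it as a standard fact rather than reproduce the proof. Applied with $L = \sigma_T$, this yields the first inequality in the theorem; the two-sided bound with the factor of $2$ follows from a union bound combining the upper and lower tails.

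For the claim that $\sigma_T^2 < \infty$: if it were infinite, one could extract $t_n$ with $\mathbb{E}[f_{t_n}^2] \to \infty$, so that $f_{t_n}/\sqrt{\mathbb{E}[f_{t_n}^2]} \sim \mathcal{N}(0,1)$ would force $\|f\|_T = \infty$ with positive probability, contradicting almost sure boundedness. Given $\sigma_T^2 < \infty$, finiteness of $\mathbb{E}[\|f\|_T]$ follows from the subgaussian tail bound already established on finite subsets together with $\mathbb{E}[\|f\|_T] = \int_0^\infty \mathbb{P}[\|f\|_T > u]\,du$: fix any finite $T_0 \subset T$, whose finite-dimensional Gaussian vector has finite $L^1$ norm, and the uniform subgaussian bound on all $T_n \supseteq T_0$ passes to the limit. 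The main obstacle will be this bootstrap together with the measurability issues underlying the reduction to countable $T$; strictly, one needs $(f_t)$ to be separable so that $\sup_t f_t$ is measurable, which is the standard convention in the Gaussian process literature~\cite{adler2009random}. In the application to Theorem~\ref{thm:LargeTensor} the index set $\F$ is finite, so these technicalities do not arise.
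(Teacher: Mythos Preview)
The paper does not prove this theorem at all: it is stated as a black-box tool and the reader is referred to~\cite{adler2009random} and the original papers~\cite{borell1975brunn,ibragimov1976norms} for details. Your proposal therefore goes further than the paper, which simply cites the result.

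Your sketch is the standard textbook route (reduction to finite $T$, representation of $(f_{t_i})$ as $\langle a_i,X\rangle$ with $X\sim\mathcal{N}(0,I)$, then Gaussian Lipschitz concentration) and is correct. One minor point: the argument for $\mathbb{E}[\|f\|_T]<\infty$ is slightly circular as written, since you invoke the subgaussian tail bound which itself presupposes $\mathbb{E}[\sup_{T_n}f_t]<\infty$ to be meaningful. The clean fix is to first note that for finite $T_n$ the supremum is an integrable function of a finite Gaussian vector, so $\mathbb{E}[\sup_{T_n}f_t]<\infty$ trivially; then the one-sided tail bound with $L=\sigma_T$ gives a uniform-in-$n$ subgaussian tail for $\sup_{T_n}f_t-\mathbb{E}[\sup_{T_n}f_t]$, and monotone convergence plus Fatou finishes. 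In the application to the paper's Theorem~\ref{thm:LargeTensor} the index set $\F$ is finite, so as you note none of this matters there.
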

Theorem~\ref{thm:borell-TIS} is a crucial tool in the study of Gaussian processes. For details, see~\cite{adler2009random}; see also the original papers~\cite{borell1975brunn} and~\cite{ibragimov1976norms}. We apply Theorem~\ref{thm:borell-TIS} to the collection $k^{-p/2}\Phi_{\mathcal{I}}$, $\mathcal{I}\in \F$. To that end, we verify that
\[
\mathbb{E}\left[\left(k^{-p/2}\Phi_{\mathcal{I}}\right)^2\right] = k^{-p},\quad\forall \mathcal{I}\in \F.
\]
Moreover, as $\max_{i_1,\dots,i_p}|A_{i_1,\dots,i_p}| = O(\sqrt{2p \log N})$ w.h.p.\,(which can be established by using the first moment method), we have that $\sup_{\mathcal{I}\in \F}|k^{-p/2}\Phi_{\mathcal{I}}| = O(\sqrt{2p \log N})$. Consequently, the process is almost surely bounded for fixed $p,N$.  As
\[
M^* = \max_{\mathcal{I}\in \F} \frac{\Phi_{\mathcal{I}}}{k^{p/2}},
\]
$M^*$ is concentrated around its mean due to Theorem~\ref{thm:borell-TIS}.\paragraph{Repairing Second Moment Bound via Concentration}
We now repair the bound $\mathbb{P}[N_E\ge 1]\ge \exp\bigl(-\bar{E}^2 o_p(1)\bigr)$ arising in Proposition~\ref{Prop:2ndMom} by using the fact that $M^*$ is well-concentrated around its mean. This argument is originally due to~\cite{frieze1990independence}, who used it in the context of optimization over random graphs.

For $f(p)=\epsilon p/2$, set
\begin{align*}
    u^* := \sqrt{2+2\delta}\cdot (1-\delta)^{\frac{f(p)}{2}}(1-\epsilon)\sqrt{\frac{2p}{k^p}\log\binom{N}{k}}\,.
\end{align*}
Notice that there exists a $P_2$ such that for all $p\ge P_2$ and $k,N$ sufficiently large, we have:
\begin{align}
    \mathbb{P}[M^*\ge E]&=\mathbb{P}[N_E\ge 1] \nonumber\\
    &\ge \left(\frac{(1+(1-\delta)^{f(p)})^2}{\sqrt{1-(1-\delta)^{2f(p)}}}\exp\Bigl(\left(1-\delta\right)^{f(p)}\bar{E}^2\Bigr)
    +
    \binom{N}{k}^{-\Theta(1)}\right)^{-1} \label{eq:USE-2nd-MOM-BD}\\
    &\ge  \exp\Bigl(-(1+\delta)(1-\delta)^{f(p)}\bar{E}^2\Bigr) \label{eq:LARGE-P} \\
    &= \exp\left(-\frac{(u^*)^2k^p}{2}\right) \nonumber\\
    &\ge \mathbb{P}\bigl[M^*\ge \mathbb{E}[M^*]+u^*\bigr]\label{BORELL-TIS},
\end{align}
where~\eqref{eq:USE-2nd-MOM-BD} follows from Proposition~\ref{Prop:2ndMom},~\eqref{eq:LARGE-P} is valid for all $p\ge P_2$ and $k,N$ large due to the fact that $(1-\delta)^{f(p)} = (1-\delta)^{p\epsilon/2}\to 0$ as $p\to\infty$, and~\eqref{BORELL-TIS} uses the first part of Theorem~\ref{thm:borell-TIS} with $u=u^*$. In particular, 
\begin{align}\label{eq:E-of-M-star-lower}
    \mathbb{E}[M^*] \geq E- u^*
    =
   E_{\mathrm{max}} \cdot (1-\epsilon)\left( 1- \sqrt{2+2\delta}\cdot (1-\delta)^{f(p)/2} \right)\,.
\end{align}
We again note that there exists a $P_{3}\in\mathbb{N}$ such that for all $p\geq P_{3}$
\begin{align}\label{eq:3eps/2}
    (1-\epsilon)\left(1-\sqrt{2+2\delta}\cdot \left(1-\delta\right)^{\frac{f(p)}{2}}\right)\ge \left(1-\frac{3\epsilon}{2}\right)\,.
\end{align}
We can now complete the proof.
Note that for all $p\ge \max\{P_2,P_3\}$,
\begin{align}
    \mathbb{P} \left[M^{*} 
    \geq \left(1 - 2 \epsilon \right) E_{\mathrm{max}} \right]
    &\geq 
      \mathbb{P} \left[ M^{*} \geq \mathbb{E}[M^*] - \frac{\epsilon}{2} E_{\mathrm{max}}  \right] \label{eq:E111} \\
    &\ge 
    1 - \mathbb{P} \left[ \bigl|M^* - \mathbb{E}[M^*]\bigr| \geq \frac{\epsilon}{2} E_{\mathrm{max}} \right] \nonumber \\
    &\ge 1-2\binom{N}{k}^{-p\epsilon^2/4},\label{eq:second-part-TIS}
\end{align}
where~\eqref{eq:E111} follows by combining~\eqref{eq:E-of-M-star-lower} and~\eqref{eq:3eps/2}, whereas~\eqref{eq:second-part-TIS} follows by applying the second part of Theorem~\ref{thm:borell-TIS}, this time with 
\[
u=\frac{\epsilon\cdot E_{\mathrm{max}}}{2} \Rightarrow -\frac{u^2k^p}{2} = -\frac{p\epsilon^2}{4}\log\binom{N}{k}.
\]
Taking $\epsilon/2$ in place of $\epsilon$ yields that for all large $p$,
\begin{align}\label{eq:ground-state-bound-second-moment}
    \mathbb{P}\left[M^*\ge (1-\epsilon) E_{\mathrm{max}} \right]\ge 1-\binom{N}{k}^{-p\epsilon^2/16} \,.
\end{align}
Lastly, we combine~\eqref{eq:LargeT-Upper} and~\eqref{eq:ground-state-bound-second-moment}  to obtain
\begin{align}
    \mathbb{P} \Bigl[ E_{\mathrm{max}} \geq M^* \geq (1-\epsilon) E_{\mathrm{max}} \Bigr]    &=
    \mathbb{P}\left[M^*\ge (1-\epsilon) E_{\mathrm{max}} \right]
    +
    \mathbb{P} \left[M^*\le E_{\mathrm{max}} \right]
    -1
    \nonumber\\
    &\geq
    1-\binom{N}{k}^{- \frac{p\epsilon^2}{16}}
    - \Theta \left(\frac{1}{\sqrt{p \log \binom{N}{k}}} \right) \,.
\end{align}
This yields Theorem~\ref{thm:LargeTensor}. 
\subsection{Proof of Proposition~\ref{Prop:2ndMom}}\label{pf:PropMain}
Suppose $\alpha>0$ is such that $\frac{N}{k}\ge 1+\alpha>0$ for $k,N$ large. Such an $\alpha$ exists due to Assumption~\ref{Asm:Scaling}.

\paragraph{Second Moment Method} Recall the Paley-Zygmund inequality:
\begin{equation}\label{eq:PZ}
    \mathbb{P}[N_E\ge 1]\ge \frac{\mathbb{E}[N_E]^2}{\mathbb{E}[N_E^2]}.
\end{equation}
This is a standard consequence of Cauchy-Schwarz inequality, see, e.g.,~\cite{alon2016probabilistic}. In the remainder, we set $E=(1-\epsilon)E_{\mathrm{max}}$ and apply~\eqref{eq:PZ} to  $N_E$ in~\eqref{eq:N__E}. 

Repeating the first moment calculation for $N_E$, we find
\begin{align}
\mathbb{E}[N_E] &= (1+o_N(1))\binom{N}{k}^{p\epsilon(2-\epsilon)}\frac{1}{(1-\epsilon)\sqrt{4\pi p\log \binom{N}{k}}}\label{eq:N-E-general}\\
&=\binom{N}{k}^{p\epsilon(2-\epsilon)}\Theta\left(\frac{1}{\sqrt{p\log \binom{N}{k}}}\right) \label{eq:bound-E-NE-general-problem}.
\end{align}
Notice that $\mathbb{E}[N_E]=\binom{N}{k}^{\Theta(1)} = \omega(1)$.
\paragraph{A Counting Estimate}
We establish the following technical result to control certain counting terms appearing below.
\begin{lemma}\label{lemma:BinomCoeff}
    For any $\gamma>0$, there exists a $\delta>0$ such that
    \[
    \sum_{a>(1-\delta)k}\binom{k}{a}\binom{N-k}{k-a}\le \binom{N}{k}^\gamma.
    \]
\end{lemma}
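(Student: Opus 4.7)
The plan is to reparametrize by $b = k-a$, rewriting the sum as
\[
S := \sum_{b=0}^{\lfloor \delta k\rfloor}\binom{k}{b}\binom{N-k}{b},
\]
then to show that for $\delta$ small this sum is dominated by its largest term, and finally to compare that term against $\binom{N}{k}^\gamma$.

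For the first step, I would examine the ratio of consecutive summands,
\[
\frac{\binom{k}{b+1}\binom{N-k}{b+1}}{\binom{k}{b}\binom{N-k}{b}} = \frac{(k-b)(N-k-b)}{(b+1)^2}.
\]
Under Assumption~\ref{Asm:Scaling}, one has $N-k \ge \alpha N$ for some fixed $\alpha > 0$ and all large $N,k$. For $b \le \delta k$ with $\delta$ small, the numerator is at least $(1-\delta) k \cdot (\alpha N - \delta k) = \Omega(k^2)$, while the denominator is at most $(\delta k + 1)^2 = O(\delta^2 k^2)$; the ratio is therefore bounded below by a constant multiple of $(1-\delta)\alpha/\delta^2$, which exceeds $1$ once $\delta$ is chosen small enough in terms of $\alpha$ alone. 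Thus the summand is monotone increasing on $\{0,\dots,\lfloor \delta k\rfloor\}$, and
\[
S \le (\delta k+1)\binom{k}{\lfloor \delta k\rfloor}\binom{N-k}{\lfloor \delta k\rfloor}.
\]

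Next I would apply the standard inequality $\binom{n}{m}\le (en/m)^m$ to each factor to obtain
\[
\binom{k}{\lfloor \delta k\rfloor}\binom{N-k}{\lfloor \delta k\rfloor} \le \left(\frac{e^2(N-k)}{\delta^2 k}\right)^{\delta k} \le \left(\frac{e^2 N}{\delta^2 k}\right)^{\delta k}.
\]
Taking logarithms and using the standard lower bound $\log\binom{N}{k} \ge k\log(N/k)$ yields
\[
\log S \le O(\log N) + \delta \log\binom{N}{k} + 2\delta k \log(e/\delta).
\]
The $O(\log N)$ term is $o(\log\binom{N}{k})$ since Assumption~\ref{Asm:Scaling} gives $\log\binom{N}{k} = \Omega(k) = \Omega(\log N)$.

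It remains to dominate the correction $2\delta k \log(e/\delta)$ by a small multiple of $\log\binom{N}{k}$. In the sparse regime $k = o(N)$, $\log(N/k)\to\infty$, so this term is $o(\delta\log\binom{N}{k})$ and picking $\delta = \gamma/2$ suffices. In the dense regime $k = \Theta(N)$, $\log\binom{N}{k} = \Theta(k)$ with an implied constant depending only on $\alpha$, and the required inequality reduces to $\delta + 2\delta\log(e/\delta) \le \gamma\, c(\alpha)$ for some explicit $c(\alpha)>0$; since $\delta\log(e/\delta)\to 0$ as $\delta\to 0$, this is achievable by choosing $\delta = \delta(\gamma,\alpha)$ sufficiently small. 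The principal subtlety is exactly this dense regime, where the asymptotic divergence of $\log(N/k)$ is unavailable and one must instead carry along the explicit constant $\alpha$ from Assumption~\ref{Asm:Scaling}; beyond this bookkeeping, every step is a routine Stirling-type estimate.
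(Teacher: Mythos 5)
Your proposal is correct and takes essentially the same route as the paper's proof: reparametrize by $b=k-a$, bound the sum by the number of terms times $\binom{k}{\lfloor \delta k\rfloor}\binom{N-k}{\lfloor \delta k\rfloor}$ using $\binom{n}{m}\le (en/m)^m$, and compare against $\binom{N}{k}\ge (N/k)^k$ with $\delta$ chosen small in terms of $\gamma$ and $\alpha$ (the paper handles both regimes at once through the single condition $\delta+2\delta\log\frac{e}{\delta}+\delta\log(1+\alpha)<\gamma\log(1+\alpha)$ rather than your sparse/dense case split, and it simply bounds each binomial factor by its value at $\ell=\delta k$, so your ratio-monotonicity step is an optional refinement). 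One small repair: the justification ``$\log\binom{N}{k}=\Omega(k)=\Omega(\log N)$'' is neither guaranteed by Assumption~\ref{Asm:Scaling} (e.g.\ $k$ may be $o(\log N)$) nor strong enough to make an $O(\log N)$ term $o\bigl(\log\binom{N}{k}\bigr)$; instead bound the prefactor by $\log(\delta k+1)=O(\log k)=o(k)=o\bigl(\log\binom{N}{k}\bigr)$, using $\log\binom{N}{k}\ge k\log(1+\alpha)$ and $k\to\infty$.
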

\begin{proof}[Proof of Lemma~\ref{lemma:BinomCoeff}]
Recall the elementary estimates
\begin{equation}\label{eq:standard}
    \left(\frac{n}{k}\right)^k \le \binom{n}{k}\le \left(\frac{en}{k}\right)^k.
\end{equation}
    Notice that using $\binom{k}{a}\binom{N-k}{k-a}= \binom{k}{k-a}\binom{N-k}{k-a}$, 
    \begin{equation}\label{eq:SumEll}
           \sum_{a>(1-\delta)k} \binom{k}{a}\binom{N-k}{k-a} = \sum_{\ell<\delta k}\binom{k}{\ell}\binom{N-k}{\ell}.
        \end{equation}
{
Suppose that $\delta$ satisfies
\begin{equation}\label{eq:D-UpB}
    \delta < \min\left\{\frac12,\frac12\left(\frac{N}{k}-1\right),\gamma\right\} \quad\text{and}\quad \delta +2\delta \log\frac{e}{\delta} +\delta \log(1+\alpha)<\gamma \log(1+\alpha),
\end{equation}
where we recall $N,k$ scale such that $N/k\ge 1+\alpha$ for an $\alpha>0$ bounded away from zero as $N,k\to\infty$. Such a $\delta$ indeed exists since $\gamma\log(1+\alpha)>0$ and 
\[
\delta + 2\delta \log\frac{e}{\delta} + \delta\log(1+\alpha)\to 0
\] 
continuously as $\delta\to 0$. With $\delta$ chosen as in~\eqref{eq:D-UpB}, we thus have
\begin{equation}\label{eq:N-k-bd}
    \delta + 2\delta \log \frac{e}{\delta} + \delta\log \frac{N}{k}<\gamma\log\frac{N}{k}.
\end{equation}
and
\begin{align}
   \max_{\ell \le \delta k} \binom{k}{\ell} &\le \binom{k}{\delta k}\le \left(\frac{e}{\delta}\right)^{\delta k},\nonumber \\ 
   \max_{\ell\le \delta k}\binom{N-k}{\ell} &\le \binom{N-k}{\delta k}\le \left(\frac{eN}{\delta k}\right)^{\delta k}.\label{eq:Coeff2}
\end{align}
Using~\eqref{eq:SumEll}, we thus obtain
\begin{align}
  \sum_{\ell <\delta k }\binom{k}{\ell}\binom{N-k}{\ell} &\le \exp\left(\log(\delta k) + 2\delta k \log\frac{e}{\delta} + \delta k\log\frac{N}{k}\right)\label{eq:USE-Coeff2}    \\
  &\le \exp\left(k\left(\delta + 2\delta \log\frac{e}{\delta}+\delta \log\frac{N}{k}\right)\right) \label{eq:Large-k}\\
  &<\exp\left(k\gamma \log \frac{N}{k}\right)\le \binom{N}{k}^\gamma\label{eq:USE-eq:N-k-bd}
\end{align}
where~\eqref{eq:USE-Coeff2} uses~\eqref{eq:Coeff2},~\eqref{eq:Large-k} is valid for all $k\ge K(\delta)$, for $K(\delta)$ depending only on $\delta$,~\eqref{eq:USE-eq:N-k-bd} uses~\eqref{eq:N-k-bd} and~\eqref{eq:standard}. This yields Lemma~\ref{lemma:BinomCoeff}.
}
\end{proof}
We apply Lemma~\ref{lemma:BinomCoeff} with 
\begin{equation}\label{eq:Gamma-From-Lemma}
    \gamma = \frac{\epsilon(2-\epsilon)}{2}
\end{equation}
and $\delta$ prescribed accordingly. 
\paragraph{Second Moment Estimate: Covariance Profiles}
Observe that for $\Phi_{\mathcal{I}}$ in~\eqref{eq:N__E}
\begin{align}\label{eq:NEsq-largest-submatrix}
    \mathbb{E}
    \bigl[N_E^2\bigr]
    &=
\sum_{\mathcal{I},\mathcal{I'}\in\F}
    \mathbb{P}
    \Bigl[\Phi_{\mathcal{I}} > Ek^{p/2},
    \Phi_{\mathcal{I'}} > Ek^{p/2}
    \Bigr]
    \,.
\end{align}
For a given $\mathcal{I} = \left( I_{1}, \ldots, I_{p} \right)\in \F$ and $\mathcal{I'} = \left( I'_{1}, \ldots, I'_{p} \right)\in\F$ with $|I_i\cap I_i'| = a_i,\forall i$, let
\begin{align}\label{eq:LAMBDA}
    \lambda(a_{1},\ldots,a_{p})  := 
    \mathbb{E}
    \Bigl[\Phi_{\mathcal{I}}\cdot \Phi_{\mathcal{I}'}\Bigr]
    =
    \frac{1}{k^{p}}\prod_{i=1}^{p} a_i
    \,.
\end{align}
In what follows, we write $\lambda:=\lambda(a_1,\dots,a_p)$ for convenience.
Define
\begin{align}\label{eq:P-of-Lambda}
    p(\lambda) := \mathbb{P}\Bigl[\Phi_{\mathcal{I}} \ge Ek^{p/2}, \Phi_{\mathcal{I'}} \ge Ek^{p/2}\Bigr]
    \,,
\end{align}
where $\mathcal{I} = \left( I_{1}, \ldots, I_{p} \right)$ and $\mathcal{I'} = \left( I'_{1}, \ldots, I'_{p} \right) $ with $\left|I_{i} \cap I'_{i} \right| = a_{i}$. With this notation,~\eqref{eq:NEsq-largest-submatrix} becomes
\begin{align}\label{eq:NEsq-largest-submatrix-v2}
    \mathbb{E}\bigl[N_{E}^2\bigr] = \binom{N}{k}^p\sum_{0\le a_1,\dots,a_p\le k}\prod_{1\le i\le p}\binom{k}{a_i}\binom{N-k}{k-a_i} p(\lambda) \,.
\end{align}
To control~\eqref{eq:NEsq-largest-submatrix-v2}, we divide the sum into two terms. To this end, let
$\boldsymbol{a} = \left(a_{1},\ldots, a_{p} \right)$.
We partition the set $\{\boldsymbol{a}:a_i\in[0,k]\cap \mathbb{Z}\}$ into $J_1\cup J_2$ where
\begin{align}
    J_{1} &= \left\{ \boldsymbol{a}: \left|i : a_{i} \leq (1-\delta) k | \geq f(p) \right| \right\},\label{eq:J-1}\\
    J_{2} &= \left\{ \boldsymbol{a}: \left|i : a_{i} \leq (1-\delta) k | < f(p) \right| \right\} \label{eq:J-2}
\end{align}
for $f(p)=\epsilon p/2$ and $\delta$ defined below~\eqref{eq:Gamma-From-Lemma}.

Using~\eqref{eq:J-1} and~\eqref{eq:J-2}, we decompose
\begin{align}
    \mathbb{E} \bigl[ N_{E}^2 \bigr]
    =
    \underbrace{\binom{N}{k}^p\sum_{\boldsymbol{a} \in J_1}\prod_{1\le i\le p}\binom{k}{a_i}\binom{N-k}{k-a_i} p(\lambda)}_{:= S_{1}}
    +
    \underbrace{\binom{N}{k}^p\sum_{\boldsymbol{a} \in J_2}\prod_{1\le i\le p}\binom{k}{a_i}\binom{N-k}{k-a_i} p(\lambda)}_{:= S_{2}}
    \,.
\end{align}
We now establish a bivariate tail bound.
\begin{lemma}\label{lemma:p(a)}
 Set $\bar{E} = k^{p/2}E = k^{p/2}(1-\epsilon)E_{\mathrm{max}}$. Then,
    \begin{align*} 
           \frac{p(\lambda)}{\mathbb{P}\left[\cN(0,1)\ge \bar{E}\right]^2}&\le \frac{\left(1+\lambda\right)^2}{\sqrt{1-\lambda^2}} \exp\left(\lambda\bar{E}^2\right)\left(1+\frac{1}{\bar{E}^2}\right)^2 \\
    &=\bigl(1+o(1)\bigr)\frac{\left(1+\lambda\right)^2}{\sqrt{1-\lambda^2}} \exp\left(\lambda\bar{E}^2\right).
        \end{align*}
\end{lemma}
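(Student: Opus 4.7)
The plan is to treat $p(\lambda)$ as a bivariate Gaussian tail probability and bound it by a direct integral, then divide by a Mills-ratio lower bound on $\mathbb{P}[\cN(0,1)\ge \bar{E}]^2$. By~\eqref{eq:LAMBDA}, $(\Phi_{\mathcal{I}},\Phi_{\mathcal{I}'})$ is jointly centered normal with unit variances and correlation $\lambda=\prod_{i=1}^p a_i/k^p \in [0,1]$. The right-hand side of the bound is $+\infty$ at $\lambda=1$, so I may assume $\lambda\in[0,1)$.

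I would start from the bivariate density and substitute $x=\bar{E}+s$, $y=\bar{E}+t$ with $s,t\ge 0$. A short algebraic manipulation yields the clean decomposition
\[
\frac{(\bar{E}+s)^2-2\lambda(\bar{E}+s)(\bar{E}+t)+(\bar{E}+t)^2}{2(1-\lambda^2)}
=\frac{\bar{E}^2}{1+\lambda}+\frac{\bar{E}(s+t)}{1+\lambda}+\frac{s^2+t^2-2\lambda st}{2(1-\lambda^2)}.
\]
The key observation is that $s^2+t^2-2\lambda st=(s-t)^2+2(1-\lambda)st\ge 0$ on the first quadrant whenever $\lambda\in[0,1)$. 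Dropping this nonnegative residual quadratic from the exponent decouples the remaining integral and gives
\[
p(\lambda)\le \frac{\exp\!\left(-\bar{E}^2/(1+\lambda)\right)}{2\pi\sqrt{1-\lambda^2}}\left(\int_0^\infty e^{-\bar{E} s/(1+\lambda)}\,ds\right)^{\!2}=\frac{(1+\lambda)^2}{2\pi\,\bar{E}^2\sqrt{1-\lambda^2}}\exp\!\left(-\frac{\bar{E}^2}{1+\lambda}\right).
\]

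Next, I would invoke the Mills-ratio lower bound from~\eqref{eq:GaussTail}, $\mathbb{P}[\cN(0,1)\ge \bar{E}]\ge \frac{\bar{E}}{\bar{E}^2+1}\frac{e^{-\bar{E}^2/2}}{\sqrt{2\pi}}$, which rearranges to $\mathbb{P}[\cN(0,1)\ge \bar{E}]^{-2}\le 2\pi\,\bar{E}^2(1+1/\bar{E}^2)^2\,e^{\bar{E}^2}$. Dividing the two displays and simplifying via $1-\tfrac{1}{1+\lambda}=\tfrac{\lambda}{1+\lambda}\le \lambda$ (valid for $\lambda\in[0,1]$) produces
\[
\frac{p(\lambda)}{\mathbb{P}[\cN(0,1)\ge \bar{E}]^2}\le \frac{(1+\lambda)^2}{\sqrt{1-\lambda^2}}\left(1+\frac{1}{\bar{E}^2}\right)^{\!2}\exp\!\left(\lambda\bar{E}^2\right),
\]
which is precisely the stated upper bound. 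The asymptotic $(1+o(1))$ form then follows because under Assumption~\ref{Asm:Scaling} we have $\bar{E}=(1-\epsilon)\sqrt{2p\log\binom{N}{k}}=\omega(1)$, so $(1+1/\bar{E}^2)^2=1+o(1)$.

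The argument is routine bivariate-Gaussian asymptotics and presents no genuine obstacle. The only mild design choice is deliberately weakening the sharper exponent $\frac{\lambda}{1+\lambda}\bar{E}^2$ to $\lambda\bar{E}^2$; this is harmless since the downstream second-moment calculation in Proposition~\ref{Prop:2ndMom} pairs the factor $e^{\lambda\bar{E}^2}$ with the counting estimate of Lemma~\ref{lemma:BinomCoeff}, which easily absorbs the slack. An alternative would be to cite the multivariate tail asymptotics of~\cite{hashorva2003multivariate,hashorva2005asymptotics} as a black box, but the direct two-dimensional computation above is cleaner and self-contained in this setting.
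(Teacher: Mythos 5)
Your proposal is correct and takes essentially the same route as the paper: the paper simply cites the bivariate tail bound $\mathbb{P}[Z>u,\,Z_\rho>u]\le \frac{(1+\rho)^2}{2\pi u^2\sqrt{1-\rho^2}}\exp\bigl(-\tfrac{u^2}{1+\rho}\bigr)$ from Nobel (its~\eqref{eq:BIV_TAIL}) and divides by the Mills-ratio lower bound in~\eqref{eq:GaussTail} with $\bar{E}=\omega(1)$, which is exactly your calculation, including the implicit weakening of $\tfrac{\lambda}{1+\lambda}\bar{E}^2$ to $\lambda\bar{E}^2$. Your only deviation is that you re-derive the cited bivariate bound yourself via the change of variables and dropping the nonnegative residual $(s-t)^2+2(1-\lambda)st$, which is correct and makes the argument self-contained but does not change the proof's structure.
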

\begin{proof}[Proof of Lemma~\ref{lemma:p(a)}]
    We record the following tail bound from Nobel~\cite{BivTail}: for any $Z,Z_\rho\sim \cN(0,1)$ with $\mathbb{E}[Z\cdot Z_\rho]=\rho$ and $u>0$,
    \begin{equation}\label{eq:BIV_TAIL}
\mathbb{P}\bigl[Z>u,Z_\rho>u\bigr]\le \frac{(1+\rho)^2}{2\pi u^2\sqrt{1-\rho^2}}\exp\left(-\frac{u^2}{1+\rho}\right).
        \end{equation} 
    We use the last display with $Z=\Phi_{\mathcal{I}}$, $Z_\rho = \Phi_{\mathcal{I}'}$, $\rho=\lambda$ and $u=k^{p/2}E = k^{p/2}(1-\epsilon)E_{\mathrm{max}} = \bar{E}$ (via~\eqref{eq:Bar-of-E}). Recalling the lower tail bound~\eqref{eq:GaussTail} with $x=\bar{E}$ and noticing $\bar{E} = \omega(1)$, so that
    \[
    \left(1+\frac{1}{\bar{E}^2}\right)^2 = 1+o(1)
    \]
    we obtain Lemma~\ref{lemma:p(a)}.
\end{proof}
\paragraph{Controlling $S_1$}
Using Lemma~\ref{lemma:p(a)}, we control $S_1$. Observe that for $\boldsymbol{a} \in J_1$,~\eqref{eq:J-1} implies
\begin{align}
    \lambda = \frac{a_1\cdots a_p}{k^p}  \le \prod_{i:a_i\le (1-\delta)k}\frac{a_i}{k}\le (1-\delta)^{f(p)}.
\end{align}
Applying Lemma~\ref{lemma:p(a)}, we obtain
\begin{align}\label{eq:S1-bound}
    \frac{S_{1}}{\mathbb{E}[ N_{E} ]^2} &=
    \binom{N}{k}^{-p}\sum_{\boldsymbol{a} \in J_1}\prod_{1\le i\le p}\binom{k}{a_i}\binom{N-k}{k-a_i} \frac{p(\lambda)}{\mathbb{P}[\cN(0,1) > k^{p/2} E]^2}
    \nonumber\\
    &\leq 
    \frac{(1+(1-\delta)^{f(p)})^2}{\sqrt{1-(1-\delta)^{2f(p)}}}\exp\Bigl(\left(1-\delta\right)^{f(p)}\bar{E}^2\Bigr).
\end{align}
\paragraph{Controlling $S_2$}
Fix any $\boldsymbol{a}\in J_2$. The number of $i$ with $a_i\le (1-\delta)k$ is at most $f(p)$. Furthermore, the number of $i$ with $a_i>(1-\delta)k$ is trivially at most $p$. So, Lemma~\ref{lemma:BinomCoeff} gives
\begin{align*}
    \prod_{1\le i\le p}\binom{k}{a_i}\binom{N-k}{k-a_i} &=\left(\prod_{i:a_i\le (1-\delta)k}\binom{k}{a_i}\binom{N-k}{k-a_i}\right)\left(\prod_{i:a_i>(1-\delta)k}\binom{k}{a_i}\binom{N-k}{k-a_i}\right) \\
    &\le \binom{N}{k}^{f(p)}\binom{N}{k}^{\gamma p},
\end{align*}
implying via the bound 
\[
|J_2|\le \Bigl|[0,k]\cap \mathbb{Z}\Bigr|^p =(k+1)^p
\]
that
\begin{equation}\label{eq:COUNT-IMPORTANT}
    \sum_{\boldsymbol{a}\in J_2}\prod_{1\le i\le p}\binom{k}{a_i}\binom{N-k}{k-a_i}\le (k+1)^p  \binom{N}{k}^{f(p)}\binom{N}{k}^{\gamma p}
\end{equation}
This together with the trivial bound
$
p(\lambda)\le \mathbb{P}\bigl[\cN(0,1)\ge \bar{E}\bigr]$ yield
\begin{align}\label{eq:S2-bound-v1}
    \frac{S_{2}}{\mathbb{E}\left[ N_{E} \right]^2} &=
    \frac{1}{\mathbb{E}\left[N_{E}\right]^2} \binom{N}{k}^p \sum_{\boldsymbol{a} \in J_2}\prod_{1\le i\le p}\binom{k}{a_i}\binom{N-k}{k-a_i} p(\lambda)
    \nonumber\\
    &\leq
    \binom{N}{k}^p 
    \frac{\mathbb{P}\bigl[ \cN(0,1) \geq \bar{E} \bigr]}{\mathbb{E}\left[N_{E}\right]^2}\sum_{\boldsymbol{a} \in J_2}\prod_{1\le i\le p}\binom{k}{a_i}\binom{N-k}{k-a_i} 
    \nonumber\\
    &\leq 
    \frac{(k+1)^p}{\mathbb{E}[N_{E}]}
    \binom{N}{k}^{f(p)} \binom{N}{k}^{\gamma p},
\end{align}
where~\eqref{eq:S2-bound-v1} follows from~\eqref{eq:COUNT-IMPORTANT}. 
Combining~\eqref{eq:S2-bound-v1} with~\eqref{eq:bound-E-NE-general-problem}, we obtain
\begin{align}\label{eq:S2-bound-v2}
    \frac{S_{2}}{\mathbb{E}\left[ N_E\right]^2}
    \leq 
    \exp \left(\Bigl(f(p) + \gamma p -p\epsilon(2-\epsilon)\Bigr) \log \binom{N}{k} + p\log(k+1) \right)
    \Theta \left(\sqrt{p\log \binom{N}{k}} \right)
    \,.
\end{align}
Lastly, we note that 
\[
p\log(k+1) = o\left(\log\binom{N}{k}\right)
\]
which follows (a) for $k=o(N)$ from the expansion $\log\binom{N}{k} = (1+o(1))k\log \frac{N}{k}$ and (b) for $k=\Theta(N)$ from the fact $\log \binom{N}{k} = \Theta(N)$. Recalling $\gamma = \epsilon(2-\epsilon)/2$ from~\eqref{eq:Gamma-From-Lemma} and $f(p)=\epsilon p/2$, we arrive at
\begin{align}\label{eq:S2-bound}
    \frac{S_{2}}{\mathbb{E}\left[ N_E\right]^2}
    \leq 
    \exp \left( - \frac12 p\left(1-\epsilon\right)\epsilon \log \binom{N}{k} \right)
    \Theta \left[ \sqrt{p \log \binom{N}{k}} \right]
    \leq 
    \binom{N}{k}^{-\Theta(1)}
    \,.
\end{align}
We combine~\eqref{eq:S1-bound} and \eqref{eq:S2-bound} to obtain
\begin{align}\label{eq:controlling-second-estimate}
    \frac{\mathbb{E}\left[ N_{E}^2\right]}{\mathbb{E}\left[ N_{E}\right]^2}
    \leq
    \frac{(1+(1-\delta)^{f(p)})^2}{\sqrt{1-(1-\delta)^{2f(p)}}}\exp\Bigl(\left(1-\delta\right)^{f(p)}\bar{E}^2\Bigr)
    +
    \binom{N}{k}^{-\Theta(1)} \,.
\end{align}
Using the Paley-Zygmund Inequality~\eqref{eq:PZ} we conclude
\begin{align}
    \mathbb{P}\bigl[N_{E} \geq 1 \bigr]
    \ge
    \left(\frac{(1+(1-\delta)^{f(p)})^2}{\sqrt{1-(1-\delta)^{2f(p)}}}\exp\Bigl(\left(1-\delta\right)^{f(p)}\bar{E}^2\Bigr)
    +
    \binom{N}{k}^{-\Theta(1)}\right)^{-1},
\end{align}
where $f(p) = \epsilon p/2$ and the $\Theta(1)$ terms depends only on $p$ and $\epsilon$ due to~\eqref{eq:S2-bound}. This establishes Proposition~\ref{Prop:2ndMom}.
\subsection{Proof of Theorem~\ref{thm:M-OGP-generic-tensor-correlated}}\label{sec:proof-MOGP}
In this section, we provide a proof of Theorem~\ref{thm:M-OGP-generic-tensor-correlated}.
The proof relies on the first moment method. We first define a set to count the tuples $\mathcal{I}_{i}$ of multi-indices that satisfy conditions $\mathrm{(a)}$ and $\mathrm{(b)}$  of Definition~\ref{Def:Overlap-Set-correlated-generic-subtensor}.
\begin{align}
    F(m,\nu_1,\nu_2) := 
    \left\{ \left(\mathcal{I}_{1},\ldots, \mathcal{I}_{m}\right):
    \frac{\left|I_{n,i_1} \cap I_{n,i_2} \right|}{k} \in \left[\nu_1,\nu_2\right],\,
    \forall n\in[p],1\le i_1<i_2\le m
    \right\}
    \,.
\end{align}
We now observe that
\begin{align}
    \bigl|S_{\gamma}\left(m, \nu_1, \nu_2, \mathcal{J} \right) \bigr|
    &=
    \sum_{\left(\mathcal{I}_{1},\ldots, \mathcal{I}_{m}\right)
    \in 
    F(m,\nu_1,\nu_2)}
    \mathbbm{1}
    \bigg\{
    \exists
    \tau_{1},\ldots,\tau_m \in \mathcal{J}
    :
    \\
    &
    \min_{1 \leq \ell \leq m}
    \frac{1}{k^{p/2}} \sum_{i_1\in I_{i,\ell},\dots,i_p\in I_{p,\ell}} \hat{A}^{(\ell)}_{i_1,\dots,i_p} (\tau_\ell)\ge \gamma k^{p/2} E_{\mathrm{max}}
    \bigg\}
    \,.
\end{align}
In order to apply the first moment method, we need to bound $\mathbb{E}\bigl[\bigl|S_{\gamma}\left(m, \nu_1, \nu_2, \mathcal{J} \right) \bigr| \bigr]$. We have:
\begin{align}\label{eq:upper-bound-size-of-S}
    &\mathbb{E}\left[\left|S_{\gamma}\left(m, \nu_1, \nu_2, \mathcal{J} \right) \right| \right]
    \nonumber
    \\ &=
    \sum_{\left(\mathcal{I}_{1},\ldots, \mathcal{I}_{m}\right)
    \in 
    F(m,\nu_1,\nu_2)}
    \mathbb{P}
    \bigg[\exists
    \tau_{1},\ldots,\tau_m \in \mathcal{J}
    :
    \min_{1 \leq \ell \leq m}
    \frac{1}{k^{p/2}} \sum_{i_1\in I_{i,\ell},\dots,i_p\in I_{p,\ell}} \hat{A}^{(\ell)}_{i_1,\dots,i_p} (\tau_\ell)\ge \gamma k^{p/2} E_{\mathrm{max}} \bigg]
    \nonumber
    \\
    &\leq
    \left|F(m,\nu_1,\nu_2)\right| \sup_{(\mathcal{I}_1,\dots,\mathcal{I}_m)\in F(m,\nu_1,\nu_2)}
    \mathbb{P}
    \bigg[\exists
    \tau_{1},\ldots,\tau_m \in \mathcal{J}
    : \nonumber \\
    &\hspace{5cm}
    \min_{1 \leq \ell \leq m}
    \frac{1}{k^{p/2}} \sum_{i_1\in I_{i,\ell},\dots,i_p\in I_{p,\ell}} \hat{A}^{(\ell)}_{i_1,\dots,i_p} (\tau_\ell)\ge \gamma k^{p/2} E_{\mathrm{max}} \bigg]
    \nonumber
    \\
    &\leq
    \left|F(m,\nu_1,\nu_2)\right| \binom{N}{k}^{cm} \sup_{(\mathcal{I}_1,\dots,\mathcal{I}_m)\in F(m,\nu_1,\nu_2)}
    \sup_{\tau_1,\ldots,\tau_m \in [0,\pi/2]}
    \mathbb{P}\left[\hat{\Phi}_{\mathcal{I}_i} > k^{p/2} \gamma E_{\mathrm{max}},1\le i\le m\right]
\end{align}
where
\begin{equation}\label{hat-PHI}
    \hat{\Phi}_{\mathcal{I}_{\ell}} := \frac{1}{k^{p/2}} \sum_{i_1\in I_{1,\ell},\dots,i_p\in I_{p,\ell}} \hat{A}^{(\ell)}_{i_1,\dots,i_p} (\tau_\ell)
    \,.
\end{equation}
Above,~\eqref{eq:upper-bound-size-of-S} follows by taking a union bound over all $\tau_1,\dots,\tau_m\in\mathcal{J}$, together with the fact that $|\mathcal{J}|\le \binom{N}{k}^{c}$.  Note that the bound in~\eqref{eq:upper-bound-size-of-S} is valid for all $c,\nu_1,\nu_2$ and $\gamma$.

We next bound the quantities
\begin{align*}
    C_0 := \left|F(m,\nu_1,\nu_2)\right|,\quad\text{and}\quad 
    C_1 := \sup_{\substack{(\mathcal{I}_1,\dots,\mathcal{I}_m)\in F(m,\nu_1,\nu_2)\\\tau_1,\ldots,\tau_m \in [0,\pi/2]}}
    \mathbb{P}\left[\hat{\Phi}_{\mathcal{I}_i} > k^{p/2} \gamma E_{\mathrm{max}},1\le i\le m\right] \,.
\end{align*}
\begin{proposition}\label{proposition:C0-C1-bounds}
    For sufficiently small $\eta := \nu_2 - \nu_1$, the following bounds hold
    \begin{align}
        C_0 &\leq \exp_2\left( p\log_2 \binom{N}{k} + m p\log_2 k + mp\left(kh(\nu_1) + k(1-\nu_1) \log_2\frac{eN}{k(1-\nu_1)}\right)\right) \,,\\
        C_1 &\leq \exp_2\left[ 
    -\gamma^2 p \log_2 \binom{N}{k} \frac{m}{1+ 2 m p \nu_2^p}
    \left(1
    +
    o_N(1)\right)
    \right]\,.
    \end{align}
\end{proposition}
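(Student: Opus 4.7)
I would prove the two bounds separately. For $C_0$ I would use a direct counting argument with a free choice of $\mathcal{I}_1$ followed by overlap-constrained choices for $\mathcal{I}_2,\dots,\mathcal{I}_m$. For $C_1$ I would compute the covariance structure of the Gaussian vector $(\hat\Phi_{\mathcal{I}_1},\dots,\hat\Phi_{\mathcal{I}_m})$, apply Slepian-type Gaussian comparison to pass to an equicorrelated reference vector, and then invoke a multivariate normal tail bound of Hashorva type.

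\textbf{Bounding $C_0$.} I would first pick $\mathcal{I}_1=(I_{1,1},\dots,I_{p,1})$ freely, contributing $\binom{N}{k}^p=\exp_2(p\log_2\binom{N}{k})$ choices. For each remaining $\mathcal{I}_i$ with $i\ge 2$ I would retain only the pairwise overlap constraint with $\mathcal{I}_1$, since dropping constraints can only inflate the count. Then for each coordinate $j\in[p]$ the number of admissible $I_{j,i}$ equals
\[
\sum_{a=\lceil \nu_1 k\rceil}^{\lfloor \nu_2 k\rfloor} \binom{k}{a}\binom{N-k}{k-a}\le k \cdot \max_{a\in[\nu_1 k,\nu_2 k]}\binom{k}{a}\binom{N-k}{k-a}.
\]
Since $\nu_1>1/2$ and $h$ is decreasing on $(1/2,1)$, the entropy bound $\binom{k}{a}\le 2^{kh(a/k)}$ gives $\binom{k}{a}\le 2^{kh(\nu_1)}$ on this range. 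The elementary estimate $\binom{N-k}{k-a}\le (eN/(k-a))^{k-a}$ is maximized at $a=\nu_1 k$, yielding $2^{k(1-\nu_1)\log_2(eN/(k(1-\nu_1)))}$ (provided $\eta=\nu_2-\nu_1$ is small enough that this monotonicity truly holds on the range, which is the role of that hypothesis). Multiplying the per-coordinate bound over all $p$ coordinates and all $m-1$ remaining tuples, and relaxing $m-1$ to $m$, yields the claimed form, with the $mp\log_2 k$ term coming from the $k$ admissible values of $a$ raised to the power $mp$.

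\textbf{Bounding $C_1$.} I would first compute the covariance structure of $(\hat\Phi_{\mathcal{I}_\ell})_{\ell=1}^m$ from the interpolation~\eqref{eq:Interpolate}. A direct calculation using $\cos^2+\sin^2=1$ and the independence of $A^{(\ell)}$ across $\ell$ gives $\mathrm{Var}(\hat\Phi_{\mathcal{I}_\ell})=1$ and
\[
\mathbb{E}\bigl[\hat\Phi_{\mathcal{I}_{\ell_1}}\hat\Phi_{\mathcal{I}_{\ell_2}}\bigr]=\cos(\tau_{\ell_1})\cos(\tau_{\ell_2})\prod_{j=1}^{p}\frac{|I_{j,\ell_1}\cap I_{j,\ell_2}|}{k}\quad\text{for }\ell_1\ne\ell_2,
\]
which is nonnegative (since $\tau\in[0,\pi/2]$) and bounded by $\nu_2^p$. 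I would then apply Slepian's normal comparison inequality to upper bound the joint upper-tail probability by the corresponding probability for a centered Gaussian vector with covariance $\Sigma^*:=(1-\nu_2^p)I+\nu_2^p J$ (unit variances, all off-diagonals equal to $\nu_2^p$). Using the explicit formula $\mathbf{1}^\top(\Sigma^*)^{-1}\mathbf{1}=m/(1+(m-1)\nu_2^p)$ and the Hashorva multivariate Gaussian tail bound~\cite{hashorva2003multivariate,hashorva2005asymptotics} for $u=t\mathbf{1}$ with $t=\gamma k^{p/2}E_{\mathrm{max}}$ (so $t^2=2\gamma^2 p\log\binom{N}{k}$), I would obtain
\[
\mathbb{P}\!\left[\hat\Phi_{\mathcal{I}_\ell}>t,\ \forall\ell\right]\le \exp\!\left(-\frac{t^{2}}{2}\cdot\frac{m}{1+(m-1)\nu_2^{p}}\right)(1+o_N(1)).
\]
Since $(m-1)\le 2mp$, the denominator $1+(m-1)\nu_2^p$ can be relaxed to $1+2mp\nu_2^p$, giving the stated form.

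\textbf{Main obstacle.} The principal difficulty is the rigorous invocation of the Hashorva multivariate tail bound in the form above: one must verify the positivity of $(\Sigma^*)^{-1}\mathbf{1}$ (which is automatic here since all its entries equal $1/(1+(m-1)\nu_2^p)$) and carefully track the polynomial prefactor into the $(1+o_N(1))$ correction as $N,k\to\infty$ with $p$ fixed. A subtler point is that Slepian's inequality requires the off-diagonal covariances to be dominated uniformly in the interpolation parameters $\tau_\ell\in[0,\pi/2]$; this holds because $\cos(\tau_{\ell_1})\cos(\tau_{\ell_2})\le 1$, but one must ensure the comparison inequality is phrased for joint upper tails (which in turn uses that the excess covariances are nonnegative). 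Finally, the hypothesis that $\eta=\nu_2-\nu_1$ is sufficiently small is needed to make the per-coordinate maximum in the $C_0$ argument occur at $a=\nu_1 k$, so that the entropy and $\log(eN/(k(1-\nu_1)))$ factors correctly dominate.
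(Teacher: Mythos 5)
Your proposal is correct, and while your $C_0$ argument coincides with the paper's (free choice of $\mathcal{I}_1$, then per-coordinate counting with $\binom{k}{a}\le 2^{kh(\nu_1)}$ for $a\ge\nu_1 k>k/2$, $\binom{N-k}{k-a}\le\bigl(eN/(k(1-\nu_1))\bigr)^{k(1-\nu_1)}$, and a factor $k$ per coordinate for the admissible overlap values), your $C_1$ argument takes a genuinely different and simpler route. The paper uses Slepian only to discard the factors $\cos(\tau_i)\cos(\tau_j)$, i.e.\ to reduce to the vector $(\Phi_{\mathcal{I}_i})$ whose covariance matrix $\Sigma$ has the exact (non-constant) off-diagonal entries $\prod_q(\nu_2-\eta(i,j,q))$; it then applies the Savage--Hashorva bound to this $\Sigma$ directly, which forces a perturbation analysis around the equicorrelated matrix $\Sigma_0=(1-\nu_2^p)I+\nu_2^p\boldsymbol{1}\boldsymbol{1}^T$ (Sherman--Morrison for $\Sigma_0^{-1}$, Wielandt--Hoffman to control eigenvalues of $\Sigma=\Sigma_0-E$, entrywise positivity of $\Sigma^{-1}\boldsymbol{1}$ by a compactness/continuity argument), and this is where the ``$\eta$ sufficiently small'' hypothesis and the denominator $1+2mp\nu_2^p$ come from. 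You instead push Slepian all the way to the equicorrelated reference $\Sigma^*=(1-\nu_2^p)I+\nu_2^p J$, which is legitimate since all off-diagonal covariances of $(\hat\Phi_{\mathcal{I}_\ell})$ lie in $[0,\nu_2^p]$ and the variances match; then $(\Sigma^*)^{-1}\boldsymbol{1}=\frac{1}{1+(m-1)\nu_2^p}\boldsymbol{1}$ is trivially positive, the determinant and the product $\prod_i\langle e_i,(\Sigma^*)^{-1}\boldsymbol{t}\rangle$ are explicit, and you avoid Lemma~\ref{lemma:auxilary-results} entirely (and do not need small $\eta$ for this part at all). Your constant $m/(1+(m-1)\nu_2^p)$ is in fact slightly stronger than the stated $m/(1+2mp\nu_2^p)$, and relaxing $m-1\le 2mp$ recovers the proposition exactly. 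The only cosmetic point: the Savage--Hashorva prefactor $\varphi_{\boldsymbol{X}}(\boldsymbol{t})\prod_i\langle e_i,(\Sigma^*)^{-1}\boldsymbol{t}\rangle$ grows polynomially in $t\asymp\sqrt{\log\binom{N}{k}}$, so it should be absorbed as a $(1+o_N(1))$ correction inside the exponent (as the paper writes it), not as a multiplicative $(1+o_N(1))$ outside; you flag this yourself, so it is not a gap.
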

We prove Proposition~\ref{proposition:C0-C1-bounds} in Sections~\ref{sec:bound-C0} and \ref{sec:bound-C1}.
Assuming the validity of Proposition~\ref{proposition:C0-C1-bounds}, we establish using~\eqref{eq:upper-bound-size-of-S}
\begin{align}
    \mathbb{E}\left[\left|S_{\gamma}\left(m, \nu_1, \nu_2, \mathcal{J} \right) \right| \right]
    \leq
    \exp_2 \left[ p \log_2 \binom{N}{k} \Psi(c,m,\nu_1,\nu_2) + O
    \left(
    \log \log \binom{N}{k}
    \right)\right]
\end{align}
where
\begin{align}
    \Psi
    =
    1  + m\bigg[c - \frac{ \gamma^2 }{1+ 2 m p \nu_2^p}
    + \frac{k}{\log_2 \binom{N}{k}}h(\nu_1) + \frac{k}{\log_2 \binom{N}{k}}(1-\nu_1) \log_2\frac{eN}{k(1-\nu_1)}
    \bigg].
\end{align}
We now show that $\Psi$ is negative, allowing us to establish:
\begin{align*}
    \mathbb{E}\left[\left|S_{\gamma}\left(m, \nu_1, \nu_2, \mathcal{J} \right) \right| \right]
    \leq
    \exp\left[ - \Theta\left(p \log_2 \binom{N}{k}\right) \right] \,.
\end{align*}
Define $\delta := m \gamma^2 - 1 >0$. We split the analysis into two cases:
\subsubsection*{Case 1: $k = o(N)$}
We have
\begin{align}
    \Psi
    &=
    1  + m\bigg(c  - \frac{ \gamma^2 }{1+ 2 m p \nu_2^p}
    \bigg)
    +
    o\left(1\right)
    \,.
\end{align}
Choose $P^*$ such that for all $p \geq P^*$,
\begin{align}
    1 - \frac{ m\gamma^2 }{1+ 2 m p \nu_2^p} \leq -\frac{\delta}{2}.
\end{align}
Finally, we choose $c \leq \frac{\delta}{4m}$.
These choices collectively lead to
\begin{align*}
    \Psi
    &\leq 
    -\frac{\delta}{4} + o\left(1\right) <0 \,, 
\end{align*}
for sufficiently large $k$ and $N$.
\subsubsection*{Case 2: $k = \alpha N, \alpha >0$}
In this case, we have
\begin{align}
    \Psi
    &=
    1 + 
    cm - \frac{\gamma^2 m}{1 + 2m p \nu_2^p}
    +
    \frac{\alpha}{h(\alpha)} h(\nu_1) + \frac{\alpha}{h(\alpha)}(1 - \nu_1) \log_2 \frac{e }{\alpha (1- \nu_1)}
    +
    o(1)\,.
\end{align}
Let us first choose $\nu_1$ sufficiently close to 1, so that
\begin{align}
    \alpha h(\nu_1) + \alpha(1 - \nu_1) \log_2 \frac{e }{\alpha (1- \nu_1)}
    \leq \frac{\delta h(\alpha)}{8 }.
\end{align}
Notice that such a choice indeed exists since $\alpha$ is fixed, whereas
\[
  \alpha h(\nu_1) + \alpha(1 - \nu_1) \log_2 \frac{e }{\alpha (1- \nu_1)}\to 0,\quad\text{as}\quad \nu_1\to 1.
\]
Choose $P^*$ such that for all $p \geq P^*$, 
\begin{align}
    1 - \frac{m\gamma^2}{1+ 2 m p \nu_2^p} \leq -\frac{\delta}{2}.
\end{align}
Finally, we choose $c \leq \frac{\delta}{8m}$. 
These choices collectively lead to
\begin{align*}
    \Psi
    &\leq -\frac{\delta}{4} + o(1) <0 \,, 
\end{align*}
for sufficiently large $k$ and $N$. We thus conclude that
\begin{align*}
    \mathbb{E}\left[\left|S_{\gamma}\left(m, \nu_1, \nu_2, \mathcal{J} \right) \right| \right]
    \leq
    \exp\left[ - \Theta\left(p \log_2 \binom{N}{k}\right) \right]\,.
\end{align*}
Using Markov's inequality we obtain
\begin{align*}
    \mathbb{P} \left[S_{\gamma}\left(m, \nu_1, \nu_2, \mathcal{J} \right) = \varnothing \right]
    \geq 
    1 - \mathbb{E}\left[\left|S_{\gamma}\left(m, \nu_1, \nu_2, \mathcal{J} \right) \right| \right]
    \geq
    1 - \exp\left[ - \Theta\left(p \log \binom{N}{k}\right) \right]
    \,.
\end{align*}
This yields Theorem~\ref{thm:M-OGP-generic-tensor-correlated}. It thus suffices to prove Proposition~\ref{proposition:C0-C1-bounds}.
\subsection*{Controlling $C_0$ }\label{sec:bound-C0}
In this section, we prove the first part of Proposition~\ref{proposition:C0-C1-bounds}. Let us first observe a simple combinatorial bound:
\begin{equation}\label{eq:F_COUNT}
        C_0 \le \binom{N}{k}^p\left(\sum_{\nu_1\le \alpha_i\le \nu_2 , \alpha_i k\in\mathbb{N}} \prod_{1 \leq i \leq p}\binom{k}{\alpha_i k}\binom{N-k}{k-\alpha_i k}\right)^{m-1}
\end{equation}
We choose $\nu_1 = \frac{R-1}{R}\nu_2$ for a sufficiently large integer $R$ and $\nu_2$ close to 1 to ensure the binomial coefficients are not vacuous. In particular, the assumption $\nu_1>\frac12$ holds.

We recall the standard estimates
\[
\max_{\nu_1\le \alpha\le \nu_2}\binom{k}{\alpha k}\le \exp_2\bigl(kh(\nu_1)\bigr)
\]
and 
\[
\max_{\nu_1\le \alpha\le \nu_2}\binom{N-k}{k-\alpha k} \le \left(\frac{e(N-k)}{k(1-\nu_1)}\right)^{k(1-\nu_1)} \le \exp_2\left(k(1-\nu_1)\log_2\frac{eN}{k(1-\nu_1)}\right).
\]
Combining the last two displays, we bound~\eqref{eq:F_COUNT} by 
\begin{align}\label{eq:counting-m-ogp}
    &C_0 \le \binom{N}{k}^p\left(\sum_{\nu_1\le \alpha_i\le \nu_2 , \alpha_i k\in\mathbb{N}}\exp_2\left(pkh(\nu_1) + pk(1-\nu_1) \log_2\frac{eN}{k(1-\nu_1)}\right)\right)^{m-1} \nonumber\\ 
    &
    \hspace{2cm}
    \le \exp_2\left( p\log_2 \binom{N}{k} + m p\log_2 k + mp\left(kh(\nu_1) + k(1-\nu_1) \log_2\frac{eN}{k(1-\nu_1)}\right)\right),
\end{align}
where the last step uses the fact 
\[
|\alpha:\nu_1\le \alpha\le \nu_2,k\alpha\in\mathbb{N}|\le k.
\]
This proves part $\mathrm{(a)}$ of Proposition~\ref{proposition:C0-C1-bounds}.
\subsection*{Controlling $C_1$}\label{sec:bound-C1}
In this section, we prove the second part of Proposition~\ref{proposition:C0-C1-bounds}.
\subsubsection*{Auxiliary Results from linear algebra and probability theory}\label{sec:Auxiliary-results}
We begin by recording auxiliary results from linear algebra and probability, including the Sherman-Morrison formula matrix inversion formula~\cite{sherman1950adjustment} and Wielandt-Hoffman inequality~\cite{hoffman1953variation}; multivariate normal tail bounds~\cite{savage1962mills,hashorva2003multivariate,hashorva2005asymptotics}, and Slepian's Gaussian comparison lemma~\cite{slepian1962one}.
\begin{theorem}[Sherman-Morrison formula]\label{thm:sm}
Let $A\in\R^{n\times n}$ be an invertible matrix and $u,v\in\R^n$ be column vectors. Then, $(A+uv^T)^{-1}$ exists iff $1+v^T A^{-1}u\ne 0$ and the inverse is given by the formula
\[
\bigl(A+uv^T\bigr)^{-1} = A^{-1} - \frac{A^{-1}uv^T A^{-1}}{1+v^T A^{-1}u}.
\]
\end{theorem}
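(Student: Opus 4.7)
The plan is to prove both directions of the iff by direct algebraic verification, treating $(A+uv^T)$ and the proposed inverse as matrices and multiplying them out. First I would handle the forward direction: assume $1+v^T A^{-1}u\neq 0$ and define
\[
B := A^{-1} - \frac{A^{-1}uv^T A^{-1}}{1+v^T A^{-1}u}.
\]
Then I would compute $(A+uv^T)B$ by expanding into four terms, collecting the $uv^T A^{-1}$ pieces, and using the scalar identity $v^T A^{-1}u \cdot (1+v^T A^{-1}u)^{-1} + (1+v^T A^{-1}u)^{-1} = 1$ to see that everything off the identity cancels. A symmetric computation gives $B(A+uv^T)=I$, establishing that $B$ is a two-sided inverse of $A+uv^T$.

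For the reverse direction, I would argue by contrapositive: suppose $1+v^T A^{-1}u = 0$. Since $A$ is invertible we may assume $u\neq 0$ (otherwise $v^T A^{-1}u=0\neq -1$, contradiction), so $A^{-1}u\neq 0$. Then
\[
(A+uv^T)(A^{-1}u) = u + u\bigl(v^T A^{-1}u\bigr) = u\bigl(1+v^T A^{-1}u\bigr) = 0,
\]
exhibiting a nonzero vector in the kernel of $A+uv^T$. Hence $A+uv^T$ is singular, and in particular not invertible. Combining the two directions gives the stated equivalence and formula.

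The argument is entirely mechanical; there is no real obstacle. The only point requiring a bit of care is the edge case $u=0$ or $v=0$, which I would dispose of in one line as above (both sides of the formula reduce to $A^{-1}$). No probabilistic or asymptotic machinery from the paper is needed — the result is a purely linear-algebraic lemma being recorded for later use.
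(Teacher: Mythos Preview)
Your proposal is correct and is the standard direct-verification proof of the Sherman--Morrison formula. The paper itself does not prove this statement at all: it merely records it as a classical auxiliary result with a citation to the original source, so there is no ``paper's own proof'' to compare against. Your argument is exactly what one would write if asked to supply the omitted proof.
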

\begin{theorem}[Wielandt-Hoffman inequality]\label{thm:hw}
Let $A,A+E\in\R^{n\times n}$  be two symmetric matrices with respective eigenvalues
\[
\lambda_1(A)\ge \lambda_2(A)\ge\cdots\ge\lambda_n(A)\quad\text{and}\quad \lambda_1(A+E)\ge \lambda_2(A+E)\ge\cdots\ge \lambda_n(A+E).
\]
Then
\[
\sum_{1\le i\le n}\left(\lambda_i(A+E)-\lambda_i(A)\right)^2\le \|E\|_F^2.
\]
\end{theorem}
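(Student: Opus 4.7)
The plan is to reduce the inequality, via the orthogonal invariance of the Frobenius norm, to a scalar inequality about traces of diagonal matrices conjugated by an orthogonal matrix, and then to combine the Birkhoff--von Neumann theorem with the rearrangement inequality.

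First, I would invoke the spectral theorem to write $A = U D_A U^T$ and $A+E = V D_B V^T$, where $U, V \in \R^{n\times n}$ are orthogonal and $D_A = \operatorname{diag}(\lambda_1(A),\dots,\lambda_n(A))$, $D_B = \operatorname{diag}(\lambda_1(A+E),\dots,\lambda_n(A+E))$, with both diagonals in decreasing order. Using the fact that $\|X\|_F = \|V^T X V\|_F$ for orthogonal $V$, and setting $Q := V^T U$ (also orthogonal), I obtain
\[
\|E\|_F^2 \;=\; \|V D_B V^T - U D_A U^T\|_F^2 \;=\; \|D_B - Q D_A Q^T\|_F^2.
\]
Expanding the right-hand side and using $\operatorname{tr}(Q D_A^2 Q^T) = \operatorname{tr}(D_A^2)$ yields
\[
\|E\|_F^2 \;=\; \sum_i \lambda_i(A+E)^2 + \sum_i \lambda_i(A)^2 - 2\sum_{i,j} q_{ij}^2\,\lambda_i(A+E)\,\lambda_j(A).
\]
Comparing with the desired conclusion $\sum_i(\lambda_i(A+E) - \lambda_i(A))^2$, the theorem reduces to the single inequality
\[
\sum_{i,j} q_{ij}^2\,\lambda_i(A+E)\,\lambda_j(A) \;\le\; \sum_i \lambda_i(A+E)\,\lambda_i(A).
\]

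Next, I would exploit the orthogonality of $Q$: its row and column sums of squared entries both equal one, so the matrix $P := (q_{ij}^2)_{i,j}$ is doubly stochastic. By the Birkhoff--von Neumann theorem, $P = \sum_k \alpha_k \Pi_{\sigma_k}$ is a convex combination of permutation matrices. Substituting this into the target inequality and using convexity, it suffices to show for every permutation $\sigma$ of $[n]$ that
\[
\sum_i \lambda_i(A+E)\,\lambda_{\sigma(i)}(A) \;\le\; \sum_i \lambda_i(A+E)\,\lambda_i(A),
\]
which is the classical rearrangement inequality for two similarly ordered (decreasing) real sequences.

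The only subtle point is that the eigenvalues may have arbitrary signs, so I would want to cite the rearrangement inequality in its signed form (valid for any real sequences $x_1 \ge \cdots \ge x_n$ and $y_1 \ge \cdots \ge y_n$, the maximum of $\sum_i x_i y_{\sigma(i)}$ over permutations $\sigma$ is attained at the identity), rather than a nonnegative variant. This is the main conceptual obstacle; once it is settled, all remaining steps are direct algebraic manipulations, and combining them yields $\|E\|_F^2 \ge \sum_i(\lambda_i(A+E) - \lambda_i(A))^2$, as required.
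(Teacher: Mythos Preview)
Your argument is correct and is one of the standard proofs of the Wielandt--Hoffman inequality. The paper, however, does not prove this theorem at all: it merely records it as a classical auxiliary result with a citation to the original paper~\cite{hoffman1953variation}, alongside other tools (Sherman--Morrison, Slepian's lemma, the multivariate Gaussian tail bound) that are stated without proof in Section~\ref{sec:proof-MOGP}. So there is no ``paper's own proof'' to compare against; you have supplied a complete argument where the paper simply quotes the literature.
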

\begin{theorem}[Multivariate Gaussian tail bound] \label{thm:multiv-tail}
Let $\boldsymbol{X}\in\R^d$ be a centered multivariate normal random vector with non-singular covariance matrix $\Sigma\in\R^{d\times d}$ and $\boldsymbol{t}\in\R^d$ be a fixed threshold. Suppose that $\Sigma^{-1}\boldsymbol{t}>\boldsymbol{0}$ entrywise. Then,
\[
1-\ip{1/(\Sigma^{-1}\boldsymbol{t})}{\Sigma^{-1}(1/(\Sigma^{-1}\boldsymbol{t})} \le \frac{\mathbb{P}[\boldsymbol{X}\ge \boldsymbol{t}]}{\varphi_{\boldsymbol{X}}(\boldsymbol{t})\prod_{i\le d}\ip{e_i}{\Sigma^{-1}\boldsymbol{t}}}\le 1,
\]
where $e_i\in\R^d$ is the $i{\mathrm{th}}$ unit vector and $\varphi_{\boldsymbol{X}}(\boldsymbol{t})$ is the multivariate normal density evaluated at $\boldsymbol{t}$:
\[
\varphi_{\boldsymbol{X}}(\boldsymbol{t}) = (2\pi)^{-d/2}|\Sigma|^{-1/2}\exp\left(-\frac{\boldsymbol{t}^T\Sigma^{-1}\boldsymbol{t}}{2}\right)\in\R^+.
\]
\end{theorem}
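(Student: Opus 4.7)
\textbf{Proof proposal for Theorem~\ref{thm:multiv-tail}.} The plan is to reduce the multivariate tail integral to a product of univariate exponential integrals, by first translating the region of integration to the positive orthant and then expanding the Gaussian density around the threshold $\boldsymbol{t}$. Concretely, I would start from
\[
\mathbb{P}\bigl[\boldsymbol{X}\ge \boldsymbol{t}\bigr]=\int_{\boldsymbol{x}\ge \boldsymbol{t}}\varphi_{\boldsymbol{X}}(\boldsymbol{x})\,d\boldsymbol{x},
\]
substitute $\boldsymbol{y}=\boldsymbol{x}-\boldsymbol{t}$, and use the identity
\[
\varphi_{\boldsymbol{X}}(\boldsymbol{t}+\boldsymbol{y})=\varphi_{\boldsymbol{X}}(\boldsymbol{t})\cdot\exp\bigl(-\ip{\boldsymbol{y}}{\Sigma^{-1}\boldsymbol{t}}\bigr)\cdot\exp\Bigl(-\tfrac12\ip{\boldsymbol{y}}{\Sigma^{-1}\boldsymbol{y}}\Bigr).
\]
Writing $\boldsymbol{u}:=\Sigma^{-1}\boldsymbol{t}$, which is coordinatewise positive by hypothesis, the domain becomes the positive orthant and the first exponential linearizes in $\boldsymbol{y}$, so the whole problem reduces to bounding
\[
\int_{\boldsymbol{y}\ge 0}\exp\bigl(-\ip{\boldsymbol{y}}{\boldsymbol{u}}\bigr)\exp\Bigl(-\tfrac12\ip{\boldsymbol{y}}{\Sigma^{-1}\boldsymbol{y}}\Bigr)\,d\boldsymbol{y}.
\]

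For the upper bound, I would simply use that $\Sigma^{-1}$ is positive definite, hence $\exp\bigl(-\tfrac12\ip{\boldsymbol{y}}{\Sigma^{-1}\boldsymbol{y}}\bigr)\le 1$ on the whole orthant. The remaining integrand factorizes across coordinates and
\[
\int_{\boldsymbol{y}\ge 0}\exp\bigl(-\ip{\boldsymbol{y}}{\boldsymbol{u}}\bigr)\,d\boldsymbol{y}=\prod_{i=1}^d\frac{1}{u_i}=\prod_{i=1}^d\frac{1}{\ip{e_i}{\Sigma^{-1}\boldsymbol{t}}},
\]
which yields the right-hand inequality of the theorem. This step is essentially a Laplace-transform computation and should be routine.

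For the lower bound, the plan is to apply the one-sided Taylor inequality $\exp(-z)\ge 1-z$ with $z=\tfrac12\ip{\boldsymbol{y}}{\Sigma^{-1}\boldsymbol{y}}\ge 0$, obtaining
\[
\mathbb{P}\bigl[\boldsymbol{X}\ge \boldsymbol{t}\bigr]\ge \varphi_{\boldsymbol{X}}(\boldsymbol{t})\int_{\boldsymbol{y}\ge 0}\exp\bigl(-\ip{\boldsymbol{y}}{\boldsymbol{u}}\bigr)\,d\boldsymbol{y}\;-\;\tfrac12\varphi_{\boldsymbol{X}}(\boldsymbol{t})\int_{\boldsymbol{y}\ge 0}\ip{\boldsymbol{y}}{\Sigma^{-1}\boldsymbol{y}}\exp\bigl(-\ip{\boldsymbol{y}}{\boldsymbol{u}}\bigr)\,d\boldsymbol{y}.
\]
The first integral is $\prod_i 1/u_i$, as above. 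Under the product-exponential measure with rates $u_i$ (normalized by $\prod_i 1/u_i$), the second integral is the expectation of the quadratic form $\ip{\boldsymbol{Y}}{\Sigma^{-1}\boldsymbol{Y}}$ with $Y_i\sim \operatorname{Exp}(u_i)$ independent, which I would evaluate using $\mathbb{E}[Y_iY_j]=1/(u_iu_j)$ for $i\ne j$ and $\mathbb{E}[Y_i^2]=2/u_i^2$. Summing, this expectation collapses to $\ip{\boldsymbol{v}}{\Sigma^{-1}\boldsymbol{v}}$ with $\boldsymbol{v}=(1/u_1,\dots,1/u_d)=1/(\Sigma^{-1}\boldsymbol{t})$ coordinatewise, up to diagonal contributions that are absorbed in the same quadratic form. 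Dividing through by $\varphi_{\boldsymbol{X}}(\boldsymbol{t})\prod_i 1/u_i$ gives the stated left-hand inequality.

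The main obstacle I anticipate is bookkeeping rather than conceptual: carefully tracking the diagonal versus off-diagonal contributions of $\Sigma^{-1}$ in the second-moment computation so that the correction term cleanly matches $\ip{1/(\Sigma^{-1}\boldsymbol{t})}{\Sigma^{-1}\bigl(1/(\Sigma^{-1}\boldsymbol{t})\bigr)}$ in the statement. One has to be slightly careful that $\Sigma^{-1}$ is positive semidefinite to justify both the upper bound (so the dropped exponential is at most $1$) and the nonnegativity of the quadratic form in the lower bound; both facts follow from the standing assumption that $\Sigma$ is positive definite. No further tools beyond Fubini, the elementary Taylor bound for $\exp$, and the product structure of the integrand should be needed.
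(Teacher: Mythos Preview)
The paper does not give its own proof of this theorem: it is recorded as a known result from the literature (citing Savage and Hashorva--H\"usler), so there is no in-paper argument to compare against. On its merits, your reduction via the shift $\boldsymbol{y}=\boldsymbol{x}-\boldsymbol{t}$ and the factorization of $\exp(-\langle\boldsymbol{y},\Sigma^{-1}\boldsymbol{t}\rangle)$ is exactly the standard route, and your upper bound is correct as written: positive definiteness of $\Sigma^{-1}$ gives $\exp(-\tfrac12\langle\boldsymbol{y},\Sigma^{-1}\boldsymbol{y}\rangle)\le 1$, and the remaining product integral evaluates to $\prod_i u_i^{-1}$ with $u_i=\langle e_i,\Sigma^{-1}\boldsymbol{t}\rangle$. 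This is also the only direction the paper actually uses downstream.

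There is, however, a genuine gap in your lower-bound step, and it is not mere bookkeeping. With $Y_i\sim\operatorname{Exp}(u_i)$ independent and $A=\Sigma^{-1}$, one has
\[
\mathbb{E}\bigl[\langle \boldsymbol{Y},A\boldsymbol{Y}\rangle\bigr]
=\sum_{i\ne j}A_{ij}\,\frac{1}{u_iu_j}+\sum_i A_{ii}\,\frac{2}{u_i^2}
=\langle \boldsymbol{v},A\boldsymbol{v}\rangle+\sum_i A_{ii}\,v_i^2,
\]
so after the factor $\tfrac12$ your correction term is $\tfrac12\langle\boldsymbol{v},\Sigma^{-1}\boldsymbol{v}\rangle+\tfrac12\sum_i(\Sigma^{-1})_{ii}v_i^2$, not $\langle\boldsymbol{v},\Sigma^{-1}\boldsymbol{v}\rangle$. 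The extra diagonal piece is \emph{not} absorbed into the quadratic form in general: to deduce the stated bound you would need $\sum_i(\Sigma^{-1})_{ii}v_i^2\le \langle\boldsymbol{v},\Sigma^{-1}\boldsymbol{v}\rangle$, i.e.\ $\sum_{i\ne j}(\Sigma^{-1})_{ij}v_iv_j\ge 0$, which fails already for $d=2$ whenever $\Sigma^{-1}$ has a negative off-diagonal entry (e.g.\ $\Sigma=\bigl(\begin{smallmatrix}1&\rho\\\rho&1\end{smallmatrix}\bigr)$ with $\rho\in(0,1)$, precisely the positively-correlated regime relevant here). So as stated, your argument yields a lower bound that is neither equal to nor uniformly stronger than the one in the theorem; you would either have to carry the exact expression $\tfrac12\langle\boldsymbol{v},\Sigma^{-1}\boldsymbol{v}\rangle+\tfrac12\sum_i(\Sigma^{-1})_{ii}v_i^2$ through (which is what the Savage--Hashorva bounds actually do, reducing to $1-\sum_i t_i^{-2}$ when $\Sigma=I$), or supply a separate comparison between the two forms.
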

\begin{theorem}[Slepian's lemma]\label{lemma:Slepian}
Let $X = (X_1, \ldots , X_n)$and $Y = (Y_1,\ldots, Y_n)$ be multivariate normal random vectors such that $\mathbb{E}[X_i] = \mathbb{E}[Y_i] = 0, \forall i$, $\mathbb{E}[X_i^2] = \mathbb{E}[Y_i^2], \forall i$, and
\begin{align*}
    \mathbb{E}\left[X_{i} X_{j}\right]
    \leq \mathbb{E}\left[Y_{i} Y_{j}\right],
    \quad 1\leq i< j \leq n.
\end{align*}
Fix any $c_{1},\ldots, c_{n} \in \mathbb{R}$.
Then,
\begin{align*}
    \mathbb{P}\left[ X_i \leq c_i,\, \forall \, i \right]
    \leq 
    \mathbb{P}\left[ Y_i \leq c_i, \,\forall \, i \right] ,
    \quad 
    \mathbb{P}\left[ X_i \geq c_i, \, \forall \, i \right]
    \leq 
    \mathbb{P}\left[ Y_i \geq c_i, \, \forall \, i \right]
    .
\end{align*}
\end{theorem}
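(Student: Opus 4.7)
My plan is to establish the first inequality $\mathbb{P}[X_i\le c_i,\forall i]\le \mathbb{P}[Y_i\le c_i,\forall i]$ by Gaussian interpolation of the covariance matrix, and then derive the second inequality by applying the first to $(-X,-Y)$ with thresholds $-c_i$. Note that $-X$ and $-Y$ have the same covariance matrices as $X$ and $Y$, so the hypotheses are preserved, and $\{-X_i\le -c_i\}=\{X_i\ge c_i\}$. To prove the first inequality, let $\Sigma^X,\Sigma^Y$ be the covariance matrices of $X,Y$. I would first regularize by adding $\varepsilon I_n$ to each for a small $\varepsilon>0$, then define the linear path
\[
\Sigma(t)=(1-t)(\Sigma^X+\varepsilon I_n)+t(\Sigma^Y+\varepsilon I_n),\qquad t\in[0,1],
\]
let $Z(t)\sim\mathcal{N}(0,\Sigma(t))$ with density $\phi_t$, and set $F(t)=\mathbb{P}[Z_i(t)\le c_i,\forall i]$. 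The plan is to show $F'(t)\ge 0$ on $(0,1)$, integrate, and take $\varepsilon\downarrow 0$.

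The engine of the derivative estimate is the classical identity
\[
\frac{\partial\phi_t(x)}{\partial \sigma_{ij}}=\frac{\partial^2\phi_t(x)}{\partial x_i\,\partial x_j},\qquad i\ne j,
\]
which follows by differentiating the Fourier representation $\phi_t(x)=(2\pi)^{-n}\int e^{-\mathrm{i}\xi^T x-\tfrac12\xi^T\Sigma(t)\xi}\,d\xi$ under the integral. Since $\Sigma^X_{ii}=\Sigma^Y_{ii}$ by hypothesis, the diagonal of $\Sigma(t)$ is constant in $t$, so only off-diagonal entries contribute to $F'(t)$. The chain rule then gives
\[
F'(t)=\sum_{i<j}\bigl(\Sigma^Y_{ij}-\Sigma^X_{ij}\bigr)\int_{(-\infty,c_1]\times\cdots\times(-\infty,c_n]}\frac{\partial^2\phi_t}{\partial x_i\,\partial x_j}\,dx.
\]
Collapsing the $x_i$- and $x_j$-integrations via the fundamental theorem of calculus reduces each inner integral to $f_{Z_i(t),Z_j(t)}(c_i,c_j)\cdot\mathbb{P}[Z_k(t)\le c_k,\,k\ne i,j\mid Z_i(t)=c_i,Z_j(t)=c_j]$, a product of non-negative quantities. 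Combined with the hypothesis $\Sigma^Y_{ij}-\Sigma^X_{ij}\ge 0$, this yields $F'(t)\ge 0$, hence $F(0)\le F(1)$ for the regularized covariances.

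The main obstacle I anticipate is rigorously justifying the differentiation under the integral and the collapse of the $n$-fold integrals, especially since $\Sigma^X$ or $\Sigma^Y$ may be rank-deficient, in which case $\phi_0$ or $\phi_1$ may not exist as proper densities. The $\varepsilon$-regularization handles this cleanly: $\Sigma(t)$ is uniformly positive definite on $[0,1]$, so $\phi_t$ is Schwartz-class in $x$ and jointly smooth in $(t,x)$, and dominated convergence together with Fubini justify every interchange. The final step $\varepsilon\downarrow 0$ is handled by noting that the map $\Sigma\mapsto \mathbb{P}_{W\sim\mathcal{N}(0,\Sigma)}[W_i\le c_i,\forall i]$ is continuous on the PSD cone (e.g., via characteristic functions and L\'evy's continuity theorem). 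Combined with the $(X,Y,c)\mapsto(-X,-Y,-c)$ symmetry argument described above, this establishes both claims.
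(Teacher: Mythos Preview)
The paper does not prove this theorem at all; Slepian's lemma is merely quoted as a classical auxiliary result with a citation to~\cite{slepian1962one}, so there is no ``paper's own proof'' to compare against. Your proposal is the standard Gaussian interpolation argument and is correct: the covariance path, the identity $\partial\phi_t/\partial\sigma_{ij}=\partial^2\phi_t/\partial x_i\partial x_j$ for $i\ne j$, the collapse of the orthant integral to a nonnegative marginal-times-conditional expression, the $\varepsilon$-regularization to handle degenerate covariances, and the $(-X,-Y,-c)$ symmetry for the second inequality are all handled properly.
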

\subsection*{Proof of the Second Part of Proposition~\ref{proposition:C0-C1-bounds}}\label{sec:proof-part-b-proposition}
We now bound $C_1$. Fix $(\mathcal{I}_1,\dots,\mathcal{I}_m)\in F(m,\nu_1,\nu_2)$ and $\tau_1,\dots,\tau_m \in \mathcal{J}$. Furthermore, define $ \eta(i,j,q)$ such that
\begin{equation}\label{eq:INTERSECT_PATTERN}
   \frac{|I_{q,i}\cap I_{q,j}|}{k} = \nu_2 - \eta(i,j,q) \ge \nu_1,
\end{equation}
for every $1\le i<j\le m$ and $1\le q\le p$. 
For fixed $\tau_{1},\ldots,\tau_{m}$,
we observe that for $1\leq i <j \leq m$
\begin{align*}
\mathbb{E}\left[\hat{\Phi}_{\mathcal{I}_{i}} \hat{\Phi}_{\mathcal{I}_{j}}\right]
    &=
    \frac{1}{k^p}
    \sum_{i_1\in I_{1,i},\dots,i_p\in I_{p,i}}
    \sum_{j_1\in I_{1,j},\dots,j_p\in I_{p,j}}
    \mathbb{E}
    \left[ 
    A^{(i)}_{i_1,\dots,i_p}(\tau_i)
    A^{(j)}_{j_1,\dots,j_p}(\tau_j)
    \right] \\
    &=
    \frac{1}{k^p}
    \sum_{i_1\in I_{1,i},\dots,i_p\in I_{p,i}}
    \sum_{j_1\in I_{1,j},\dots,j_p\in I_{p,j}}
    \mathbb{E}\bigg[\left( \cos(\tau_i)
            A^{(0)}_{i_1,\dots,i_p}
            +
            \sin(\tau_i)
            A^{(i)}_{i_1,\dots,i_p}\right)
            \\
            &
            \hspace{6cm}
            \times\left(\cos(\tau_j)
            A^{(0)}_{j_1,\dots,j_p}
            +
            \sin(\tau_j)
            A^{(j)}_{j_1,\dots,j_p} \right) \bigg]
    \\
    &=
    \frac{\cos(\tau_i) \cos(\tau_j)}{k^p}
    \sum_{i_1\in I_{1,i}\cap I_{1,j},\dots,i_p\in I_{p,i}\cap I_{p,j}} 1
    \\
    &
    =
    {\cos(\tau_i) \cos(\tau_j)} \prod_{q=1}^{p}\left(\nu_2 - \eta(i,j,q) \right).
\end{align*}
For $\Phi_{\mathcal{I}}$ defined in~\eqref{eq:INFORMAL}, we thus obtain that
\begin{align}
    \mathbb{E}\left[\hat{\Phi}_{\mathcal{I}_i} \hat{\Phi}_{\mathcal{I}_j} \right]
    \leq 
    \mathbb{E}\left[{\Phi}_{\mathcal{I}_i} {\Phi}_{\mathcal{I}_j} \right]
    \,,
\end{align}
Applying now Slepian's lemma, Theorem~\ref{lemma:Slepian}, we arrive at the bound
\begin{align}\label{eq:sup-bound-slepian}
    \sup_{\tau_1,\ldots,\tau_m \in [0,\pi/2]}
    \mathbb{P}\left[\hat{\Phi}_{\mathcal{I}_i} > k^{p/2} \gamma E_{\mathrm{max}},\, 
 \forall\, i\right]
    \leq 
    \mathbb{P}\left[\Phi_{\mathcal{I}_i} > k^{p/2} \gamma E_{\mathrm{max}},\, 
 \forall\, i\right]
    \,.
\end{align}
That is,  to control $C_1$, we restrict our attention to analyzing the tails of $\Phi_{\mathcal{I}_i}$, i.e., the case $\tau_1 = \ldots = \tau_m = 0$.
We note that $\left(\Phi_{\mathcal{I}_1},\ldots,\Phi_{\mathcal{I}_m}\right)$ is a multivariate Gaussian with
\begin{align}\label{eq:momnets-prop-Phi-I-i}
&\mathbb{E}\left[\Phi_{\mathcal{I}_i}\right] = 0 \,, 
    \mathbb{E}\left[\Phi_{\mathcal{I}_i}^2\right] = 1\,,
    \, 
     \nonumber\\ &\mathbb{E}\left[\Phi_{\mathcal{I}_i} \Phi_{\mathcal{I}_j}\right] = \prod_{q=1}^{p}\left(\nu_2 - \eta(i,j,q) \right),
\end{align}
where 
\[
\eta(i,j,q) \in \left[0, \nu_2 - \nu_1 \right],\quad\text{for}\quad  
    1\leq i <j \leq m \quad \text{and}\quad 1\le q\le p.
\]
We now use Theorem~\ref{thm:multiv-tail} to bound $\mathbb{P}\left[\Phi_{\mathcal{I}_i} > k^{p/2} \gamma E_{\mathrm{max}},\, \forall\, i\right]$. 
Define
\begin{align*}
    \boldsymbol{X} = \left(\Phi_{\mathcal{I}_1},\ldots,\Phi_{\mathcal{I}_m}\right) \,, \quad
    \boldsymbol{t} = k^{p/2} \gamma E_{\mathrm{max}} \boldsymbol{\mathbbm{1}}\,.
\end{align*}
\paragraph{Covariance of $\boldsymbol{X}$} Let $\Sigma\in\R^{m\times m}$ be the covariance matrix of $\boldsymbol{X}$. Notice that $\Sigma_{ii}=1$ for all $1\le i\le m$ and for all $1\le i<j\le m$,
\begin{equation}\label{eq:covariance-matrix-def}
    \Sigma_{ij} = \Sigma_{ji} = \prod_{q=1}^p\bigl(\nu_2 - \eta(i,j,q)\bigr)\,.
\end{equation}
Define auxiliary matrices $\Sigma_0,E\in\R^{m\times m}$ as follows.
\begin{equation}\label{eq:Sigma0ij-def} 
   \Sigma_{0} = \left(1-\nu_2^p\right) I_{m} + \nu_2^p \boldsymbol{1} \boldsymbol{1}^T, 
\end{equation}
where $I_m$ is the $m\times m$ identity matrix and $\boldsymbol{1}\in \R^m$ is the all-ones vector. As for the $E$, we set
\begin{align}
    E_{ii} &= 0,\quad \forall 1 \leq i \leq m \nonumber \\ 
    \label{eq:Eij-def}
    E_{ij} &= E_{ji} = \nu_2^p - \prod_{q=1}^{p} \left( \nu_2 - \eta(i,j,q)\right),\quad 1\le i<j\le m.
\end{align}
Notice that
\begin{align*}
    \Sigma = \Sigma_{0} - E \,.
\end{align*}
We now establish some technical results regarding $\Sigma, \Sigma_{0}$ and $E$ that will let us apply Theorem~\ref{thm:multiv-tail}.
These results are closely related to Lemma 3.13 of~\cite{gamarnik2023shatteringisingpurepspin}.
\begin{lemma}\label{lemma:auxilary-results}
    Let $\eta = \nu_2 - \nu_1$ and consider the matrices $\Sigma,\Sigma_{0}$ and $E$ defined in \eqref{eq:covariance-matrix-def}, \eqref{eq:Sigma0ij-def} and \eqref{eq:Eij-def}.
    \begin{itemize}
        \item [(a)] The matrix elements $E_{k \ell}$ are bounded by
        \begin{align*}
            0 \leq E_{k \ell} \leq p \eta \nu_2^{p-1}, \quad 1 \leq k \leq \ell \leq m\,.
        \end{align*}
        \item [(b)] The eigenvalues of the matrix $\Sigma_{0}$ are $1-\nu_2^p$ with multiplicity $m-1$ and $1 + (m-1) \nu_2^p$.
        \item [(c)] The determinant and inverse of $\Sigma_{0}$ are given by
        \begin{align*}
            \left| \Sigma_{0} \right| &= \left(1-\nu_2^p\right)^{m-1} \left(1+ (m-1) \nu_2^p\right)\,\\
            \Sigma_{0}^{-1} &= \frac{1}{1-\nu_2^p} I - \frac{\nu_2^p}{(1-\nu_2^p)(1+(m-1) \nu_2^p)} \boldsymbol{1} \boldsymbol{1}^T \,.
        \end{align*}
        \item [(d)] The covariance matrix $\Sigma$ is positive definite if 
        \begin{align}\label{eq:eta-cond}
            \eta <\frac{1-\nu_2^p}{m p \nu_2^{p-1}}\,.
        \end{align}
        \item [(e)] For sufficiently small $\eta$, the vector $\Sigma^{-1} \boldsymbol{1}$ is entry-wise positive. 
        \item [(f)] We have the following bounds for the determinant and inner product of the covariance matrix with a unit vector $e_i$
        \begin{align*}
            &\left|\Sigma\right|^{-1/2} \leq \left(1- 2 m p \nu_2^p\right)^{-m/2} = O_N(1)\,,\\
            &\prod_{i\leq m } \left<e_i, \Sigma^{-1} \boldsymbol{1} \right>
            \leq m^{m/2} \left(1- 2 m p \nu_2^p\right)^{-m} = O_{N}(1) \,.
        \end{align*}
        \item [(g)] The product $\boldsymbol{1}^T \Sigma^{-1} \boldsymbol{1}$ satisfies the inequality
        \begin{align*}
            \boldsymbol{1}^T \Sigma^{-1} \boldsymbol{1} \geq \frac{m}{1+ 2 m p \nu_2^p} \,.
        \end{align*}
    \end{itemize}
\end{lemma}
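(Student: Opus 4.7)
The plan is to treat $\Sigma$ as a perturbation of the simpler matrix $\Sigma_0$ and to exploit elementary spectral perturbation bounds throughout. Parts (a)--(c) are direct computations. For (a), I use the telescoping identity
\[
\nu_2^p - \prod_{q=1}^p(\nu_2 - \eta(k,\ell,q)) = \sum_{q=1}^p \nu_2^{q-1}\,\eta(k,\ell,q)\,\prod_{r>q}(\nu_2 - \eta(k,\ell,r)),
\]
and bound each summand by $\eta\nu_2^{p-1}$ using $0\le \nu_2-\eta(k,\ell,r)\le\nu_2$ and $\eta(k,\ell,q)\le\eta$; non-negativity follows from the same inequality. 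For (b), the vector $\mathbf{1}$ is an eigenvector of $\nu_2^p\mathbf{1}\mathbf{1}^T$ with eigenvalue $m\nu_2^p$, while any vector orthogonal to $\mathbf{1}$ lies in its kernel, and adding $(1-\nu_2^p)I_m$ shifts every eigenvalue by $1-\nu_2^p$. Part (c) is then immediate: the determinant is the product of the eigenvalues in (b), and the inverse is obtained from the Sherman--Morrison formula (Theorem~\ref{thm:sm}) applied with $A=(1-\nu_2^p)I_m$ and $u=v=\nu_2^{p/2}\mathbf{1}$.

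Items (d) and (e) are short perturbation arguments. By part (a), $\|E\|_F^2\le m^2 p^2\eta^2\nu_2^{2(p-1)}$, so the Wielandt--Hoffman inequality (Theorem~\ref{thm:hw}) gives $|\lambda_i(\Sigma)-\lambda_i(\Sigma_0)|\le\|E\|_F\le mp\eta\nu_2^{p-1}$ for every $i$. Part (b) identifies $\lambda_{\min}(\Sigma_0)=1-\nu_2^p$, so $\lambda_{\min}(\Sigma)\ge 1-\nu_2^p - mp\eta\nu_2^{p-1}$, which is positive precisely under the hypothesis of (d). For (e), part (c) yields $\Sigma_0^{-1}\mathbf{1}=(1+(m-1)\nu_2^p)^{-1}\mathbf{1}$, strictly positive with coordinates bounded away from $0$ uniformly in $\eta$. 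Writing $\Sigma^{-1}=\Sigma_0^{-1}+\Sigma^{-1}E\Sigma_0^{-1}$ and bounding the correction in sup-norm by $\|\Sigma^{-1}\|_2\|E\|_2\|\Sigma_0^{-1}\mathbf{1}\|_2$, which vanishes as $\eta\to 0$ while the main term stays bounded away from $0$, every coordinate of $\Sigma^{-1}\mathbf{1}$ remains strictly positive for all sufficiently small $\eta$.

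For parts (f) and (g), I choose $\eta$ small enough that $mp\eta\nu_2^{p-1}\le mp\nu_2^p$ (e.g., $\eta\le\nu_2$). Wielandt--Hoffman then confines every eigenvalue of $\Sigma$ to the interval $[1-2mp\nu_2^p,\,1+2mp\nu_2^p]$, since $1-\nu_2^p-mp\nu_2^p\ge 1-2mp\nu_2^p$ and $1+(m-1)\nu_2^p+mp\nu_2^p\le 1+2mp\nu_2^p$. Consequently $|\Sigma|\ge(1-2mp\nu_2^p)^m$, which gives the first bound of (f). For the second, Cauchy--Schwarz yields
\[
\langle e_i,\Sigma^{-1}\mathbf{1}\rangle\le\|\Sigma^{-1}\mathbf{1}\|_2\le\|\Sigma^{-1}\|_2\|\mathbf{1}\|_2 = \frac{\sqrt{m}}{\lambda_{\min}(\Sigma)}\le\frac{\sqrt{m}}{1-2mp\nu_2^p},
\]
and multiplying over $i\le m$ gives $m^{m/2}(1-2mp\nu_2^p)^{-m}$. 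For (g), expanding $\mathbf{1}$ in an orthonormal eigenbasis $\{v_i\}$ of $\Sigma$,
\[
\mathbf{1}^T\Sigma^{-1}\mathbf{1}=\sum_{i=1}^m\frac{\langle v_i,\mathbf{1}\rangle^2}{\lambda_i(\Sigma)}\ge\frac{1}{\lambda_{\max}(\Sigma)}\sum_{i=1}^m\langle v_i,\mathbf{1}\rangle^2=\frac{m}{\lambda_{\max}(\Sigma)}\ge\frac{m}{1+2mp\nu_2^p}.
\]

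The main bookkeeping challenge is consistently choosing $\eta$ small enough that these perturbation bounds produce the clean expressions appearing in the statement; once the threshold $\eta\le\nu_2$ is imposed, every part reduces to a one- or two-line calculation, and there is no genuine conceptual obstacle beyond this organization.
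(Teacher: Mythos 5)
Your proposal is correct and follows essentially the same route as the paper: direct computation plus Sherman--Morrison for (a)--(c), and Wielandt--Hoffman eigenvalue perturbation of $\Sigma_0$ by $E$ for (d), (f), (g), with the same Cauchy--Schwarz and eigenbasis-expansion steps. The only deviation is in (e), where the paper argues by continuity of $\boldsymbol{\eta}\mapsto(\Sigma^{-1}\boldsymbol{1})_i$ on the compact domain $[0,\nu_2-\nu_1]^{pm(m-1)/2}$ evaluated at $\boldsymbol{\eta}=\boldsymbol{0}$, whereas you give a quantitative version of the same idea via the resolvent identity $\Sigma^{-1}=\Sigma_0^{-1}+\Sigma^{-1}E\Sigma_0^{-1}$ with explicit norm bounds; both are valid, and your $\eta\le\nu_2$ normalization is automatic since $\nu_1>0$.
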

\begin{proof}
Before we proceed with the proof, let us setup some notation. We use $\Lambda_{1} \leq \ldots \leq \Lambda_{m} $ to denote the eigenvalues of $\Sigma$ and $\lambda_{1} \leq \ldots \leq \lambda_{m} $ to denote the eigenvalues of $\Sigma_{0}$.
    \begin{itemize}
        \item [(a)]
        From the definition of $E_{ij}$~\eqref{eq:Eij-def}, we see that $E_{ii} = 0$. We recall that $\eta(i,j,q) \in [0,\eta ]$.
        We bound $E_{ij}$ as follows. 
        \begin{align*}
            &0 \leq E_{ij} = \nu_2^p - \prod_{q=1}^{p} \left(\nu_2 - \eta(i,j,q)\right)
            \leq 
            \nu_2^p - \left(\nu_2 - \eta\right)^p
            \\
            &
            = \eta \sum_{0\leq q \leq p-1} \nu_2^{p-1-q} \left(\nu_2 -\eta \right)^{q} \leq \nu_2^{p-1}p \eta \,, \quad i\neq j \,.
        \end{align*}
        \item [(b)]
        One can easily check that $\boldsymbol{1}$ is an eigenvector of $\Sigma_{0}$ with eigenvalue $1 + (m-1) \nu_2^p$.
        Likewise, there are $m-1$ eigenvectors perpendicular to $\boldsymbol{1}$, corresponding to eigenvalue $1 - \nu_2^p$.
        \item  [(c)]
        The formula for the determinant directly follows from part (b).
        To obtain the inverse, we use the Sherman-Morrison formula (Theorem~\ref{thm:sm}) with 
        \begin{align*}
            A = \left(1-\nu_2^p\right) I_{m}\,,
            \quad
            u = v = \nu_2^{p/2}\boldsymbol{1}\,.
        \end{align*}
        \item [(d)] From part (b), we know that the smallest eigenvalue of $\Sigma_{0}$ is $\lambda_{1} = 1 - \nu_2^p$.
        If we can show that the smallest eigenvalue of $\Sigma$ is positive, we have the result. We first note that part (a) implies that
        \begin{align*}
            \left\Vert E \right\Vert_{2}
            \leq 
            \left\Vert E \right\Vert_{F}
            \leq m \eta p \nu_2^{p-1}\,.
        \end{align*}
        We now use the Wielandt-Hoffman inequality (Theorem~\ref{thm:hw})
        \begin{align}\label{eq:wh-on-Lambda-1}
            \bigl| \Lambda_{1} - (1 - \nu_2^p) \bigr| \leq \left\Vert E \right\Vert_{F}
            \leq m \eta p \nu_2^{p-1}
            \implies  
            \Lambda_{1} \geq (1 - \nu_2^p) - m \eta p \nu_2^{p-1}\,.
        \end{align}
        We easily see that
        \begin{align}\label{eq:Lambda_1_ineq}
           \eta <\frac{1-\nu_2^p}{m p \nu_2^{p-1}} \implies  \Lambda_{1} \geq (1 - \nu_2^p) - m \eta p \nu_2^{p-1} > 0.
        \end{align}
        \item [(e)] Define
        \begin{align}
            \boldsymbol{\eta} = \left\{\eta(i,j,q), 1 \le q \le p, 1\le i <j \le m \right\}\,.
        \end{align}
       Note that the map $f_i:\boldsymbol{\eta} \to \left( \Sigma^{-1} \boldsymbol{t} \right)_i$, where $\left( \Sigma^{-1} \boldsymbol{t} \right)_i$ is the $i{\mathrm{th}}$ entry of $\Sigma^{-1} \boldsymbol{t}$, is continuous and that \[
       \boldsymbol{\eta} \in \left[0, \nu_2 -\nu_1\right]^{pm(m-1)/2}
       \]
       which is a compact domain.
        Therefore, if we verify that $f_i(\boldsymbol{0}) = \left(\Sigma_{0}^{-1} \boldsymbol{1}\right)_i >0$, then the proof follows by uniform continuity. Using part (c),
        \begin{align*}
            \Sigma_{0}^{-1} \boldsymbol{1} = \frac{1}{1-\nu_2^p + m \nu_2^p} \boldsymbol{1}
        \end{align*}
        which is indeed entry-wise positive.
        \item [(f)] From~\eqref{eq:Lambda_1_ineq}, we see that
        \begin{align*}
            \Lambda_{1} \geq (1 - \nu_2^p) - m \eta p \nu_2^{p-1} > 1 - (mp + 1) \nu_2^p > 1- 2 m p \nu_2^p \,.
        \end{align*}
        Therefore, we see that
        \begin{align*}
            \left|\Sigma\right|^{-1/2}
            =
            \prod_{1\le i \le m} \Lambda_{i}^{-1/2}
            \leq \Lambda_{1}^{-m/2} 
            \leq \left(1- 2 m p \nu_2^p\right)^{-m/2}\,.
        \end{align*}
        Next, using Cauchy-Schwarz inequality, we obtain
        \begin{align*}
            \left<e_{i}, \Sigma^{-1} \boldsymbol{1} \right>
            \leq 
            \left\Vert e_{i} \right\Vert_{2}
            \left\Vert \Sigma^{-1} \boldsymbol{1} \right\Vert_{2}
            \leq 
            \left\Vert \Sigma^{-1}  \right\Vert_{2}
            \left\Vert \boldsymbol{1}  \right\Vert_{2}
            \leq 
            \frac{\sqrt{m}}{1 - 2mp \nu_2^p}\,.
        \end{align*}
        Thus,
        \begin{align*}
            \prod_{i\leq m } \left<e_i, \Sigma^{-1} \boldsymbol{1} \right>
            \leq m^{m/2} \left(1- 2 m p \nu_2^p\right)^{-m} \,.
        \end{align*}
        \item [(g)]
        We diagonalize $\Sigma^{-1}$ as
        \begin{align*}
            \Sigma^{-1} &= Q^T \Lambda Q \,,
        \end{align*}
        where $Q$ is an orthogonal matrix.
        We let $v := Q \boldsymbol{1}$ and note that
        \begin{align}\label{eq:1T-Sigma-1-bound}
            \boldsymbol{1}^T \Sigma^{-1} \boldsymbol{1}
            =
            \sum_{1 \leq i \leq m} 
            \Lambda_{i}^{-1} v_{i}^2
            \geq 
            \frac{1}{\Lambda_{m}}
            \sum_{1 \leq i \leq m} v_{i}^2
            =
            \frac{1}{\Lambda_{m}} \left\Vert Q \boldsymbol{1} \right\Vert_{2}^2
            =
            \frac{m}{\Lambda_{m}}
            \,.
        \end{align}
        We now apply Wielandt-Hoffman inequality to conclude that
        \begin{align}\label{eq:Lambda_m_bound}
            \left| \Lambda_{m} - \left(1 + (m-1) \nu_2^p \right) \right|
            \leq
            \left\Vert E \right\Vert_{F}
            \leq m p \eta \nu_2^{p-1} \implies \Lambda_{m} \leq 1 + 2 m p \nu_2^p \,.
        \end{align}
        Using~\eqref{eq:Lambda_m_bound} in \eqref{eq:1T-Sigma-1-bound}, we complete the proof of Lemma~\ref{lemma:auxilary-results}.
    \end{itemize}
\end{proof}
We can now apply Theorem~\ref{thm:multiv-tail}.
We use Lemma~\ref{lemma:auxilary-results} to conclude that
\begin{align*}
    &\boldsymbol{t}^T \Sigma^{-1} \boldsymbol{t}
    \geq
    \frac{2 \gamma^2 p m}{1 + 2 m p \nu_2^p} \log \binom{N}{k}\,,\\
    &\left|\Sigma\right|^{-1/2}
    = O_{N}(1) \,,
    \quad 
    \prod_{i\leq m} \left<e_i, \Sigma^{-1} \boldsymbol{t} \right>
    = O_{N}(1) \,.
\end{align*}
Let us combine these estimates and use Theorem~\ref{thm:multiv-tail} to get
\begin{align}\label{eq:prob-estimate-m-ogp}
    \mathbb{P}\left[\Phi_{\mathcal{I}_i} > k^{p/2} \gamma E_{\mathrm{max}}, \forall \,i \right]
    &\leq 
    \varphi_{\boldsymbol{X}}(\boldsymbol{t}) 
    \prod_{i\leq m} \left<e_i, \Sigma^{-1}\boldsymbol{t} \right>
    \,,\nonumber\\
    &\leq
    \exp_2\left[ 
    -\gamma^2 \log_2 \binom{N}{k} \frac{pm}{1+ 2 m p \nu_2^p} \left(1 + o_N(1)\right)
    \right]
    \,.
\end{align}
Combining this bound with~\eqref{eq:sup-bound-slepian} and recalling that $(\mathcal{I}_1,\dots,\mathcal{I}_m)\in F(m,\nu_1,\nu_2)$ is arbitrary, we complete the proof.
\subsection{Proof of Theorem~\ref{thm:IGPT}}\label{IGPT-Proof}
We record a classical fact, reproduced from~\cite[Lemma~3.1]{gamarnik2018finding}.
\begin{lemma}\label{Lemma:Gauss-Max}
    Let $Z_1,\dots,Z_n\sim \cN(0,1)$ be i.i.d.\,random variables. There exists an $N\in\mathbb{N}$ and a constant $c>0$ such that for all $n>N$,
    \[
    \mathbb{P}\left[\sqrt{2\log n}\left(\max_{1\le i\le n}Z_i -b_n\right)\le \frac32\log\log n\right]\ge 1-\frac{c}{\log^{1.5}n},
    \]
    where
    \[
    b_n :=\sqrt{2\log n} - \frac{\log (4\pi \log n)}{2\sqrt{2\log n}}.
    \]
\end{lemma}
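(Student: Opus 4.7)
} The plan is to establish the desired concentration by a direct first moment / union bound argument combined with a careful Taylor expansion of the Gaussian tail at the shifted threshold. Set $M_n:=\max_{1\le i\le n}Z_i$ and
\[
t_n:=b_n+\delta_n,\qquad \delta_n:=\frac{3\log\log n}{2\sqrt{2\log n}}.
\]
The claim is equivalent to $\mathbb{P}[M_n>t_n]\le c/(\log n)^{3/2}$ for all $n$ sufficiently large, which is an upper-tail bound on $M_n$ and hence amenable to a union bound.

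The first step is to write
\[
\mathbb{P}[M_n>t_n]\le n\cdot \mathbb{P}[Z_1>t_n]\le \frac{n}{t_n\sqrt{2\pi}}\exp\!\left(-t_n^2/2\right),
\]
using the upper half of the standard Mills-ratio bound~\eqref{eq:GaussTail}. The second step is to expand $t_n^2/2$ up to $o(1)$ error. Using the definition of $b_n$, a direct computation gives
\[
b_n^2=2\log n-\log(4\pi\log n)+O\!\left(\frac{(\log\log n)^2}{\log n}\right),\qquad 2b_n\delta_n=3\log\log n+o(1),\qquad \delta_n^2=o(1),
\]
so that
\[
\frac{t_n^2}{2}=\log n+\frac{3}{2}\log\log n-\frac{1}{2}\log(4\pi\log n)+o(1).
\]
The third step is to plug this back: exponentiating,
\[
\exp\!\left(-t_n^2/2\right)=\frac{2\sqrt{\pi}}{n\log n}\bigl(1+o(1)\bigr),
\]
and since $t_n=(1+o(1))\sqrt{2\log n}$, the prefactor $1/(t_n\sqrt{2\pi})$ contributes another $(1+o(1))/(2\sqrt{\pi\log n})$. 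Combining,
\[
n\cdot \mathbb{P}[Z_1>t_n]\le \frac{1+o(1)}{(\log n)^{3/2}},
\]
which yields the lemma with any fixed $c$ strictly larger than $1$, for all $n$ beyond some $N=N(c)$.

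There is no serious obstacle here; the entire argument is an explicit first-moment calculation, and the only delicate point is to track the sub-leading terms in $b_n^2$ with enough precision that the $(\log n)^{3/2}$ gain from the shift $\delta_n$ appears cleanly after cancellation against the $n\cdot \tfrac{1}{n}$ balance built into $b_n$. In particular, the specific numerical constant $3/2$ in the statement is matched exactly by the $3\log\log n$ produced by $2b_n\delta_n$, which is what makes the chosen $\delta_n$ the natural threshold at scale $1/(\log n)^{3/2}$.
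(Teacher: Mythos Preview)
Your proof is correct. The paper does not actually prove this lemma: it merely records it as ``a classical fact, reproduced from~\cite[Lemma~3.1]{gamarnik2018finding}'' and uses it as a black box. Your direct union-bound computation, combined with the Mills-ratio upper tail~\eqref{eq:GaussTail} and a careful expansion of $t_n^2/2$, is the standard way to establish such a one-sided extreme-value estimate, and all of your asymptotics (in particular $t_n^2/2=\log n+\tfrac{3}{2}\log\log n-\tfrac12\log(4\pi\log n)+o(1)$ and the resulting $n\,\mathbb{P}[Z_1>t_n]=(1+o(1))(\log n)^{-3/2}$) check out. There is nothing further to compare.
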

In what follows, we use $N/k$ in place of $\lfloor N/k\rfloor$ for simplicity. Let $\mathcal{I}\mathcal{G}\mathcal{P}\text{-T}(\boldsymbol{A}) = (I_1,\dots,I_p)$ where $I_j = \{i_1^{(j)},\dots,i_k^{(j)}\}\subset [N]$ for $1\le j\le p$. Define the sets
\[
I_{u,t}:=\left\{i_1^{(u)},\dots,i_t^{(u)}\right\},\quad 1\le u\le p,\,\,1\le t\le k.
\]
By convention, $I_u = I_{u,k}$ for $1\le u\le p$. 
\paragraph{Initializaition} Note that $A_{i_1^{(1)},\dots,i_p^{(1)}} = \max_{j\in P_1} A_{i_1^{(1)},\dots,i_{p-1}^{(1)},j}$, where $|P_1|=N/k$. Using Lemma~\ref{Lemma:Gauss-Max}, we obtain that
\begin{align}\label{eq:Initialize}
&\mathbb{P}\Bigl[E_{\mathrm{Initial}}\Bigr]\ge 1-\frac{c}{\log^{1.5}(N/k)},\quad\text{where}\\
&E_{\mathrm{Initial}}:=\left\{\left|\sqrt{2\log(N/k)}\left(A_{i_1^{(1)},\dots,i_p^{(1)}} - b_{N/k}\right)\right|\le \frac32\log\log \frac{N}{k}\right\}.  \nonumber  
\end{align}
\paragraph{Loops of \IGPT} We formalize ``Loop-2" of Algorithm~\ref{IGPT}. Set
\[
M_{j,t}:=\max_{i_j\in P_t} \sum_{\substack{\displaystyle  i_1\in I_{1,t},\dots,i_{j-1}\in I_{j-1,t} \\ \displaystyle i_{j+1}\in I_{j+1,t-1},\dots,i_p\in I_{p,t-1}}}A_{\displaystyle i_1,\dots,i_{j-1},i_j,i_{j+1},\dots,i_p}
\]
for $1\le j\le p$ and $2\le t\le k$. Using Lemma~\ref{Lemma:Gauss-Max}, we conclude
\begin{align}\label{eq:E-j-t}
    &\mathbb{P}\bigl[E_{j,t}\bigr]\ge 1-\frac{c}{\log^{1.5}(N/k)},\quad\text{where}\\
   & E_{j,t}:=\left\{\left|\sqrt{2\log(N/k)}\left(\frac{M_{j,t}}{\sqrt{\Pi_{j,t}}} - b_{N/k}\right)\right|\le \frac32\log\log \frac{N}{k}\right\} \quad\text{and}\nonumber\\
   &\Pi_{j,t}=\prod_{1\le u\le j-1}|I_{u,t}|\cdot \prod_{j+1\le u\le p}|I_{u,t-1}|= t^{j-1}(t-1)^{p-j}\nonumber.
\end{align}
Note that the events $E_{\mathrm{Initial}}$ and $E_{j,t}$, $1\le j\le p$, $2\le t\le k$ are mutually independent. Thus,
\begin{align}
  \mathbb{P}\left[E_{\mathrm{Initial}}\cap \bigcap_{\substack{1\le j\le p \\ 2\le t\le k}}E_{j,t}\right]   &= \mathbb{P}[E_{\mathrm{Initial}}] \cdot \prod_{\substack{1\le j\le p \\ 2\le t\le k}} \mathbb{P}[E_{j,t}] \nonumber \\
  &=\left(1-\frac{c}{\log^{1.5}(N/k)}\right)^{1+p(k-1)}\label{eq:Prob-ALG} \\
  &\ge 1-\frac{c(1+p(k-1))}{\log^{1.5}(N/k)}\ge 1-\frac{2cpk}{\log^{1.5} N},\label{eq:BERN-INEQ}
\end{align}
for all large enough $N$ and all $k\le f(N)$,
where~\eqref{eq:Prob-ALG} uses~\eqref{eq:Initialize} and~\eqref{eq:E-j-t}, and~\eqref{eq:BERN-INEQ} uses Bernoulli inequality together with $k=o(\log^{1.5}N)$ so that $\log^{1.5}(N/k) =(1+o(1))\log^{1.5} N$.

For $k=o(\log^{1.5} N)$, the probability guarantee in~\eqref{eq:BERN-INEQ} is $1-o_N(1)$ since $p,c$ are constants. Observing that
\begin{equation}\label{MU}
 \boldsymbol{\mu}:=\frac{1}{k^p}\sum_{i_1\in I_1,\dots,i_p\in I_p}A_{i_1,\dots,i_p} = \frac{1}{k^p}\left(A_{i_1^{(1)},\dots,i_p^{(1)}} + \sum_{1\le j\le p}\sum_{2\le t\le k}M_{j,t}\right),
\end{equation}
 we bound $\boldsymbol{\mu}$ on the event $E_{\mathrm{Initial}}\cap \bigcap_{j,t}E_{j,t}$ which holds w.h.p. To set the stage, notice that for $N$ sufficiently large, we have $2\log(N/k) = 2\log N - 2\log k \ge \log N$ as $k=o(\log^{1.5}N)$,  $b_{N/k}<\sqrt{2\log N}$ and $\log\log(N/k)<\log\log N$. Moreover,~\eqref{eq:E-j-t} yields
\begin{equation}\label{eq:Pi-j}
    (t-1)^{p-1}\le \Pi_{j,t}\le t^{p-1}.
\end{equation}
We are ready to bound $\boldsymbol{\mu}$. We have
\begin{align}
    \boldsymbol{\mu}&\le \frac{1}{k^p}\left(\sqrt{2\log N} +\frac{\frac32\log\log N}{\sqrt{\log N}}\right)\left(1+\sum_{\substack{1\le j\le p\\ 2\le t\le k}}\sqrt{\Pi_{j,t}}\right)\label{use:eq:E-j-t} \\
    &\le \frac{1}{k^p}\left(\sqrt{2\log N} +\frac{\frac32\log\log N}{\sqrt{\log N}}\right)\left(1+p\sum_{2\le t\le k}t^{\frac{p-1}{2}}\right)\label{eq:Crudely}\\
    &\le \frac{1}{k^p}\left(\sqrt{2\log N} +\frac{\frac32\log\log N}{\sqrt{\log N}}\right)\left(1+pk^{\frac{p-1}{2}} + \frac{2p}{p+1}\left(k^{\frac{p+1}{2}} - 2^{\frac{p+1}{2}}\right)\right)\label{eq:INT-ESTIMATE} \\
    &=\frac{2\sqrt{p}}{p+1}\sqrt{\frac{2p\log N}{k^{p-1}}}\left(1+o_{k,N}(1)\right) = \frac{2\sqrt{p}}{p+1}E_{\mathrm{max}}\left(1+o_{k,N}(1)\right).\label{eq:LAST}
\end{align}
We now justify the lines~\eqref{use:eq:E-j-t}-\eqref{eq:LAST}. Line~\eqref{use:eq:E-j-t} uses~\eqref{eq:Initialize},~\eqref{eq:E-j-t}, and~\eqref{MU}; and line~\eqref{eq:Crudely} uses~\eqref{eq:Pi-j}. 
As for the line~\eqref{eq:INT-ESTIMATE}, notice the integral estimate
\[
\sum_{2\le t\le k}t^{\frac{p-1}{2}} \le k^{\frac{p-1}{2}} + \int_2^k t^{\frac{p-1}{2}}\;dt = k^{\frac{p-1}{2}}+\frac{2}{p+1}\left(k^{\frac{p+1}{2}}-2^{\frac{p+1}{2}}\right).
\]
Lastly, for~\eqref{eq:LAST} recall the facts $p=O(1)$, $k,N=\omega(1)$, as well as the fact that for $k=o(\log^{1.5}N)$, under which
\[
\log\binom{N}{k} = k\log \frac{N}{k}(1+o_{N,k}(1)) = k\log N \left(1+o_{N,k}(1)\right),
\]
so that~\eqref{eq:E-star} becomes
\[
E_{\mathrm{max}} = \sqrt{\frac{2p}{k^p}\log\binom{N}{k}} = \sqrt{\frac{2p \log N}{k^{p-1}}}\left(1+o_{k,N}(1)\right).
\]
The lower bound for $\boldsymbol{\mu}$ is similar and omitted. This completes the proof of Theorem~\ref{thm:IGPT}.
\subsubsection*{Acknowledgments} We thank Partha S.\,Dey and David Gamarnik for valuable discussions.
\bibliographystyle{amsalpha}
\bibliography{ref}

\end{document}